\newcommand{\Z}{\mathbb{Z}}
\newcommand{\R}{\mathbb{R}}
\newcommand{\C}{\mathbb{C}}
\newcommand{\re}{\operatorname{Re}}
\newcommand{\im}{\operatorname{Im}}
\theoremstyle{plain}
\newtheorem{theorem}{Theorem}[section]
\newtheorem{lemma}[theorem]{Lemma}
\newtheorem{proposition}[theorem]{Proposition}
\newtheorem{conjecture}[theorem]{Conjecture}
\theoremstyle{definition}
\newtheorem{remark}[theorem]{Remark}
\newtheorem*{convention}{Convention}
\def \cT {\mathcal{T}}
\def \cR {\mathcal{R}}
\DeclareMathOperator{\sn}{sn}
\begin{document}

\title[The $\mathrm{tG}$ and $\mathrm{rGL}$ families]{Existence of the tetragonal and rhombohedral deformation families of the gyroid}

\author{Hao Chen}
\address[Chen]{Georg-August-Universit\"at G\"ottingen, Institut f\"ur Numerische und Angewandte Mathematik}
\email{h.chen@math.uni-goettingen.de}
\thanks{H.\ Chen is supported by Individual Research Grant from Deutsche Forschungsgemeinschaft within the project ``Defects in Triply Periodic Minimal Surfaces'', Projektnummer 398759432.}

\keywords{Triply periodic minimal surfaces}
\subjclass[2010]{Primary 53A10}

\date{\today}

\begin{abstract}
	We provide an existence proof for two 1-parameter families of embedded triply
	periodic minimal surfaces of genus three, namely the tG family with
	tetragonal symmetry that contains the gyroid, and the rGL family with
	rhombohedral symmetry that contains the gyroid and the Lidinoid, both
	discovered numerically in the 1990s.  The existence was previously proved
	within a neighborhood of the gyroid and the Lidinoid, using Weierstrass data
	defined on branched rectangular tori.  Our main contribution is to extend the
	technique to branched tori that are not necessarily rectangular.
\end{abstract}

\maketitle

\section{Introduction}

A triply periodic minimal surface (TPMS) is a minimal surface $M \subset \R^3$
that is invariant under the action of a 3-dimensional lattice $\Lambda$.  The
quotient surface $M/\Lambda$ then lies in the flat 3-torus~$\R^3/\Lambda$.  The
genus of $M/\Lambda$ is at least three, and TPMSs of genus three are
abbreviated as TPMSg3s.

Due to their frequent appearance in nature and science, the study of TPMSs
enjoys regular contributions from physicists, chemists, and crystallographers.
Their discoveries of interesting examples often precede the rigorous
mathematical treatment.  The most famous example would be the gyroid discovered
in 1970 by Alan Schoen~\cite{schoen1970}, then a scientist at NASA.  Unlike
other TPMSs known at the time, the gyroid does not contain any straight line or
planar curvature line, hence it cannot be constructed by the popular conjugate
Plateau method~\cite{karcher1989}.  The second TPMSg3 with this property was
discovered only twenty years later in 1990, by chemists Lidin and
Larsson~\cite{lidin1990}, and known nowadays as the Lidinoid.  The gyroid and
the Lidinoid were later proved to be embedded by mathematicians Gro\ss
e-Brauckmann and Wohlgemuth~\cite{kgb1996}.

\medskip

By intentionally reducing symmetries of the gyroid and the Lidinoid, two
1-parameter families of TPMSg3s, which we call tG and rGL, were discovered in
\cite{fogden1993, fogden1999}; see also~\cite{schroderturk2006}.  Both families
contain the gyroid and retain respectively its rhombohedral and tetragonal
symmetries.  Remarkably, none of these surfaces contains straight lines or
planar curvature lines.  Moreover, tG and rGL surfaces are not contained in the
5-parameter family of TPMSg3s constructed by Meeks~\cite{meeks1990}.  Today,
the only other explicitly known TPMSg3s outside Meeks' family are the recently
discovered 2-parameter families oH (containing Schwarz' H)~\cite{chen2018b} and
o$\Delta$~\cite{chen2018a}.

In~\cite{fogden1993, fogden1999}, periods were closed only numerically,
producing convincing images that leave no doubt for the existence of tG and
rGL.  Although the importance of numerical discoveries could never be
overestimated, the lack of a formal existence proof (that does not involve any
numerics) often indicates room for better mathematical understanding.  Indeed,
our approach in the current paper brings new ways to visualize the tG and rGL
surfaces.

\medskip

An attempt of existence proof for tG and rGL was carried out by
Weyhaupt~\cite{weyhaupt2006, weyhaupt2008}.  He used the flat structure
technique first introduced by Weber and Wolf~\cite{weber1998, weber2002}.
Unlike~\cite{fogden1993, fogden1999} who parameterized TPMSg3s on branched
spheres, Weyhaupt defined Weierstrass data on branched tori.  In particular,
the gyroid and the Lidinoid, as well as the classical Schwarz' surfaces, are
parameterized on rectangular tori.

Weyhaupt showed that there exists a continuous 1-parameter family of tori that
solve the period problems for tG.  This family contains the rectangular torus
of the gyroid and ``does not deform too much from rectangular''.
See~\cite[Lemmas~4.3 \& 4.5]{weyhaupt2008}\footnotemark\ for precise
statements.  Similar results were obtained for an ``rG'' family near the
gyroid and an ``rL'' family near the Lidinoid.  These conclusions, in
Weyhaupt's own word~\cite[\S 6.0.6]{weyhaupt2006}, only asserted ``the
existence of an analytic family of possibly small parameter space''.  In
particular, it was not clear that rL and rG are part of the same family, which
we call rGL in the current paper.  Weyhaupt was aware that, to get away from
small neighborhoods, one needs to deal with Weierstrass data defined on
non-rectangular tori.

\footnotetext{\cite[Lemma~4.4]{weyhaupt2008} is a typo.  Weyhaupt meant $b>0$ on $B$ and $b<0$ on $Y$.}

\medskip

In the current paper, we provide an existence proof for the whole tG and rGL
families.  More precisely, our main results are
\begin{theorem}[tG]\label{thm:maintG}
	There is a 1-parameter family of TPMSg3s containing the gyroid, which we call
	tG, with the following properties:
	\begin{itemize}
		\item Each TPMSg3 in tG admits a screw symmetry of order 4 around a
			vertical axis and rotational symmetries of order 2 around horizontal
			axes.

		\item tG intersects the tD family and tends to 4-fold saddle towers in the
			limit.
	\end{itemize}
\end{theorem}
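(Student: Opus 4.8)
The plan is to push the flat-structure (Weber--Wolf) machinery used by Weyhaupt from rectangular branched tori to a two-real-parameter family of branched tori that need not be rectangular. For $\tau$ in a suitable open region of the upper half-plane --- the gyroid sitting at a purely imaginary $\tau_0$ --- I would put the tG Weierstrass data on $X_\tau=\C/(\Z+\tau\Z)$: a height differential $dh$ and a Gauss map $g$ with $g^2$ an explicit elliptic function $f_\tau$, the divisors of $dh$ and of $f_\tau$ being dictated by the tG symmetry group (an order-$4$ screw about a vertical axis together with order-$2$ rotations about horizontal axes). Imposing these symmetries means requiring the data to be equivariant under the corresponding conformal and anticonformal automorphisms of $X_\tau$; this fixes $f_\tau$ and constrains the period lattice. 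After normalizing a scale and an associate-family rotation, the unknowns are $\re\tau$, $\im\tau$, and one associate angle $\theta$, and the period problem becomes the requirement that the developing images of the flat structures of $g\,dh$ and $dh/g$ fit together.

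Next I would carry out the period reduction. The lattice of periods of $(g\,dh,\, dh/g,\, dh)$ must have rank exactly $3$, and by the symmetry constraints the vertical period of $dh$ is automatically rank one, so the content lies in the horizontal periods. These collapse to (i) a \emph{conjugate} equation --- the non-rectangular replacement for the usual ``the torus is rectangular'' condition --- which, using one of the anticonformal involutions, can be solved to cut out a smooth arc $\gamma$ through the gyroid point in the $(\tau,\theta)$-region, and (ii) a single residual real equation $P(\tau,\theta)=0$ along $\gamma$. Here $P$ is, up to a positive factor, the signed difference of two edge lengths --- equivalently an imaginary part of a period integral --- read off from the explicit Euclidean polygon that develops the flat structure of $g\,dh$, whose vertices are elliptic integrals depending real-analytically on $\tau$.

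Existence then follows from an intermediate-value argument with two controlled ends of $\gamma$. At one end the associate angle degenerates, the data becomes conjugate-symmetric, and the surface is a member of (or a limit within) the tD family; there $P$ has a definite sign, computed from the classical flat structure of tD. At the other end $\tau$ degenerates, a handle pinches, $X_\tau$ converging to a noded torus, i.e.\ a twice-punctured sphere, and after renormalization the Weierstrass data converges to that of a $4$-fold Karcher saddle tower; there $P$ has the opposite sign. Real-analyticity of $P$ in the arc parameter then yields a zero $(\tau_*,\theta_*)$ whose data integrates to a genus-$3$ minimal immersion into a flat $3$-torus, carrying the prescribed screw and rotational symmetries by construction; embeddedness follows as in Weyhaupt's analysis, from a Krust-type (conjugate-surface) argument together with the symmetry group forcing the fundamental patch to be a graph. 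Connectedness of the full solution set --- hence that tG is one $1$-parameter family, containing the gyroid, intersecting tD and limiting onto $4$-fold saddle towers --- follows from the implicit function theorem at regular points together with the identification of the two ends.

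The main obstacle, and the genuine departure from Weyhaupt's setting, is precisely the passage to non-rectangular tori. On the rectangular locus an extra anticonformal reflection makes the developing polygons reflection-symmetric, so several periods vanish for free and $P=0$ becomes one manifestly monotone equation in one variable; off that locus the reflection is gone, the polygons are only rotationally symmetric, and controlling $P$ requires a genuinely two-dimensional analysis of the elliptic-integral map $\tau\mapsto(\text{polygon vertices})$. I expect the two hardest points to be (a) showing that the conjugate equation (i) defines a \emph{single} smooth arc $\gamma$ across the whole parameter region, not only near $\tau_0$, and (b) pinning down the sign of $P$ at the saddle-tower degeneration uniformly, since there both flat structures degenerate simultaneously and the limiting period must be extracted by a careful renormalization.
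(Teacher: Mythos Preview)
Your high-level strategy---push Weyhaupt's flat-structure machinery to non-rectangular tori, reduce the period problem, and run a sign-change argument between a tD end and a saddle-tower end---is in the same spirit as the paper's. But the architecture of the reduction differs, and the embeddedness step has a genuine gap. On the reduction: the paper does \emph{not} first cut out an arc $\gamma$ via a conjugate equation and then solve a residual $P=0$ along it. Using the observation that the associate family of every candidate contains a ``twisted square catenoid'' bounded by horizontal curved squares, the period problem collapses in one step to a \emph{single} real equation $\theta_h(\tau)=\theta_v(\tau)$ on the two-real-dimensional $\tau$-domain, where $\theta_v(\tau)=\arg(\tau-1)-\pi/2$ is explicit and $\theta_h(\tau)=\arg\int_0^{(1+\tau)/2} G\,dz$; the associate angle is \emph{determined} by either side and never appears as a free unknown. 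Existence is then a two-dimensional separation argument: one checks the sign of $\theta_h-\theta_v$ on the full boundary of $\Omega_t=\{-1<\re\tau<1,\ |\tau\pm\tfrac12|>\tfrac12\}$---the tP line, the two CLP half-circles, the tD line, and $\im\tau\to\infty$---and concludes that an analytic solution curve must separate boundary pieces of opposite sign. Your two-step scheme might be made to work, but it hides the essential role of the CLP boundaries, and your worry (a) about $\gamma$ being a single global arc is precisely the difficulty the paper's one-equation formulation sidesteps.

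The embeddedness step is where your proposal actually fails. A Krust-type argument needs a fundamental patch that is a graph over a convex planar domain, typically produced by a planar symmetry curve or a straight boundary arc; but tG surfaces, like the gyroid itself, contain \emph{no} straight lines and \emph{no} planar curvature lines, so no such patch is available---and this is not how Weyhaupt argues either. The paper instead uses Weyhaupt's hidden uniqueness statement that on the line $\re\tau=0$ the gyroid is the \emph{only} solution of the period equation; hence the solution curve must pass through the gyroid, and then Meeks' result (embeddedness is open and closed in continuous families of immersed TPMSg3s) propagates embeddedness from the gyroid to the entire tG family. Without that mechanism you have an immersed family but no way to rule out self-intersections.
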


\begin{theorem}[rGL]\label{thm:mainrGL}
	There is a 1-parameter family of TPMSg3s containing the gyroid and the
	Lidinoid, which we call rGL, with the following properties:
	\begin{itemize}
		\item Each TPMSg3 in rGL admits a screw symmetry of order 3 around a
			vertical axis and rotational symmetries of order 2 around horizontal
			axes.

		\item rGL intersects the rPD family and tends to 3-fold saddle towers in
			the limit.
	\end{itemize}
\end{theorem}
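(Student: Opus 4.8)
The plan is to realize rGL by the flat-structure technique of Weber and Wolf, running essentially the same argument as for tG (Theorem~\ref{thm:maintG}) with the order-$4$ screw symmetry replaced by one of order~$3$. First I would fix the conformal model: a genus-three surface carrying an order-$3$ screw symmetry quotients, by Riemann--Hurwitz, to a torus $T_\tau=\C/(\Z+\tau\Z)$ with two totally ramified branch points whose positions on $T_\tau$ are fixed by the symmetry. The order-$2$ horizontal rotations and the rhombohedral lattice prescribe the divisors of the Gauss map and of the height differential $dh$ --- which, being equivariant under the screw symmetry only up to cube roots of unity, are encoded by meromorphic objects on $T_\tau$ --- so that the only moduli left are the conformal type $\tau$ in the upper half-plane together with one real L\'opez--Ros factor. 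Weyhaupt's restriction to \emph{rectangular} tori is the requirement that $\tau$ be purely imaginary; the aim here is to let $\tau$ range over a genuinely two-dimensional region, which forces one to develop the flat cone metrics $|G\,dh|$ and $|G^{-1}dh|$ as general, possibly non-rectangular Euclidean polygons rather than as rectangles.

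Second, I would reduce the period problem. The order-$3$ screw together with the order-$2$ horizontal rotations constrains the horizontal period lattice to be hexagonal and identifies most of the periods with one another; the vertical period is disposed of in the usual way, by symmetry together with the normalization of $dh$ (this is what the real L\'opez--Ros factor takes care of). What survives is a single real-analytic equation $P(\tau)=0$, expressing that the developing images of the two flat structures $|G\,dh|$ and $|G^{-1}dh|$ are congruent through the rotation by $2\pi/3$ that the screw symmetry induces on $\R^3$. One checks directly that the two purely imaginary values of $\tau$ belonging to the gyroid and to the Lidinoid solve $P(\tau)=0$, so that Weyhaupt's local families rG and rL are exactly pieces of $\{P=0\}$ near these points. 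For every $\tau$ with $P(\tau)=0$, the Weierstrass integral descends to a complete immersed TPMSg3 carrying, by construction, the order-$3$ screw and the order-$2$ horizontal rotations; embeddedness is then obtained, as in Gro\ss e-Brauckmann--Wohlgemuth and Weyhaupt, by verifying that a fundamental patch bounded by symmetry curves is embedded (equivalently, that the Gauss map is injective there) and tiling $\R^3$ by the symmetry group.

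Third, I would show that $\{P=0\}$ contains a single arc realizing rGL. Where $\partial P\neq 0$ the implicit function theorem already gives a one-dimensional real-analytic curve of solutions, so it remains to identify the component through the gyroid and to pin down its ends. To this end I would analyze $P$ near the degenerate regimes of $\tau$ on the boundary of moduli space, where a handle of $T_\tau$ pinches, the flat structures open up, and the suitably rescaled surfaces converge to Karcher's $3$-fold saddle towers; I would also locate the interior parameter at which the Weierstrass data matches that of a rhombohedral $\mathrm{rPD}$ surface. Following the analytic curve out of the gyroid and controlling the sign and limiting behaviour of $P$ near the boundary regimes then shows that this curve is a single $1$-parameter family passing through the Lidinoid, meeting $\mathrm{rPD}$, and degenerating to $3$-fold saddle towers in the limit --- which are exactly the two bulleted properties.

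The main obstacle is precisely the step that goes beyond Weyhaupt: controlling the flat structures, and hence $P(\tau)$, on non-rectangular tori. When $\tau$ is purely imaginary the developing polygons are rectangles, every relevant period is manifestly real, and $P$ collapses to a single monotone real integral of elliptic type; for general $\tau$ one must show that the cone metrics $|G\,dh|$ and $|G^{-1}dh|$ still develop to polygons one can analyze explicitly (if obliquely), that the residual period condition remains a single real equation once all symmetries have been used, and that $P$ retains enough monotonicity or sign control near the two degenerate regimes for the continuation argument to close. I expect this to come down to delicate estimates for theta-quotients and for the asymptotics of the cone metrics as $\tau$ tends to the boundary of moduli space, and this is where the bulk of the work will lie.
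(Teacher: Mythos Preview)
Your high-level strategy---parametrize on a branched torus, reduce the period problem to a single real-analytic equation in $\tau$, and continue the solution curve from the gyroid to the degenerate boundary---matches the paper's. But the execution diverges at the two places that matter, and in one of them your plan would likely fail.

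First, the reduction and boundary analysis. You anticipate ``delicate estimates for theta-quotients'' to control the flat structures on non-rectangular tori. The paper avoids this entirely. The key geometric observation is that, with $dh=-i\,dz$, the Weierstrass map sends the annulus $0<\operatorname{Im}z<\operatorname{Im}\tau/2$ to a \emph{twisted triangular catenoid}; moving along the associate family opens it into a helical ribbon. The period problem then becomes the single equation $\theta_h(\tau)=\theta_v(\tau)$, where $\theta_v=\arg(\tau-\tfrac12)-\tfrac{\pi}{2}$ is elementary and $\theta_h=\arg\psi(\tau)$ with $\psi(\tau)=\int_0^{(1+\tau)/2}G\,dz$. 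The point is that the boundary of the relevant $\tau$-domain $\Omega_r$ consists of the \emph{classical families} H, rPD and hCLP, whose flat structures are known explicitly; so $\theta_h-\theta_v$ can be evaluated (or at least signed) on each boundary arc without any asymptotic theta estimates. Only the behaviour as $\operatorname{Im}\tau\to\infty$ and near the rPD corner requires genuine asymptotics, and there a single dominant term of the Jacobi $\sn$ expansion suffices. Your plan does not identify this classical-boundary structure, which is what makes the intermediate-value argument go through cleanly.

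Second, embeddedness. You propose to check that a fundamental patch bounded by symmetry curves is embedded and tile by the symmetry group. This is precisely what \emph{cannot} be done here: rGL surfaces contain neither straight lines nor planar curvature lines, so there is no fundamental patch with manageable boundary in the Gro\ss e-Brauckmann--Wohlgemuth sense. The paper instead uses Weyhaupt's uniqueness statement---that on the vertical lines $\operatorname{Re}\tau=-\tfrac12$ and $\operatorname{Re}\tau=0$ the gyroid and Lidinoid are the \emph{only} solutions---to force the solution curve through these two surfaces, and then invokes Meeks' result that embeddedness is open and closed along a continuous family of TPMSg3s. Without this detour through the known embedded surfaces, your embeddedness step has no obvious mechanism.

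A minor point: under the paper's convention the gyroid and Lidinoid sit at $\operatorname{Re}\tau=-\tfrac12$ and $\operatorname{Re}\tau=0$, not at purely imaginary $\tau$; the marking of the branch point $(1+\tau)/2$ is what distinguishes them from H and rPD on the same rectangular torus.
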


Two properties are listed in each of the statements above.  The first specifies
the expected symmetries.  Weyhaupt~\cite{weyhaupt2006, weyhaupt2008} has proved
that TPMSg3s with these symmetries exist in a neighborhood of the gyroid and
the Lidinoid.  The significance of our work lies in the second property, which
states that the 1-parameter family continues in one direction until
intersecting Meeks' family and, in the other direction, to degenerate limits
where curvature blows up.

\medskip


Let us give a preview of our approach.

We will use the same Weierstrass parameterization as Weyhaupt, only more
explicit in terms of the Jacobi $\sn$ function.  Explicit computations are not
possible for non-rectangular tori.  However, we notice from the Weierstrass
data that the associate family of every tG or rGL surface contains a ``twisted
catenoid''.  These are minimal annuli bounded by curved squares or triangles.
Then we generalize a point of view from~\cite{kgb1996}: As one travels along
the associate family, the twisted catenoids open up into gyrating ribbons, and
the surface is immersed if adjacent ribbons ``fit exactly into each other''.
This leads to two expressions for the associate angle, and the period problem
asks to find tori for which the two expressions are equal.

It turns out that the torus of an rG or tGL surface is well defined up to a
hyperbolic reflection group.  The boundary of its fundamental domain
corresponds to classical Schwarz' TPMSg3s, which we understand very well.  This
already allows us to conclude the existence of the families, all the way to the
degenerate limits.  We then investigate the asymptotic behavior of the period
problem at the limits of the tD or rPD family.  This allows us to locate the
intersections with Meeks' family.  A uniqueness statement hidden in Weyhaupt's
work~\cite{weyhaupt2006, weyhaupt2008} implies that the families must contain
the gyroid, whose embeddedness then ensures the embeddedness of all TPMSg3s in
the families.

\medskip

The paper is organized as follows.

In Section~\ref{sec:symmetry}, we describe the symmetries of tG and rGL
surfaces.  This is done by relaxing a rotational symmetry of the classical tP,
H and rPD surfaces to a screw symmetry of the same order.  We will define a
family $\cT$ of TPMSg3s with order-4 screw symmetries, which contains the tG
family as well as Schwarz' classical tP, tD and CLP families; we also define a
family $\cR$ of TPMSg3s with order-3 screw symmetries, which contains the rGL
family as well as the classical H and rPD families.

In Section~\ref{sec:weierstrass}, we deduce the Weierstrass data for surfaces
in $\cT$ and $\cR$ from their symmetries.  We first prove that surfaces with
screw symmetries can be represented as branched covers of flat tori.  The
symmetries then force the branch points at 2-division points of the tori.  This
allows us to write down the Weierstrass data explicitly in terms of the Jacobi
elliptic function $\sn$.  In the end of this section, we establish a convention
on the choice of the torus parameter.

In Section~\ref{sec:period}, we reduce the period problems to just one equation
using the twisted catenoids and the ribbon picture.  We then give the existence
proof in Section~\ref{sec:proof}.  The asymptotic analysis is technical hence
postponed to the Appendix.

Section~\ref{sec:discuss} is dedicated to discussions.  We point out that tG
and rGL families provide bifurcation branches that were missing
in~\cite{koiso2014}.  We also observe reflection groups that act on $\cT$ and
$\cR$, which provide new ways to visualize the known TPMSg3s.  This motivates
us to conjecture that the known surfaces are the only surfaces in $\cT$ and
$\cR$.  The first step for proving the conjecture is to confirm a uniqueness
statement for tG and rGL surfaces.

\medskip

We assume some familiarity of the reader with classical TPMSg3s including
Schwarz' surfaces, the gyroid, and the Lidinoid.  If this is not the case, it is
recommended to take a look into~\cite{fogden1992, kgb1996, weyhaupt2006}.

\subsection*{Acknowledgement}

The author is grateful to Matthias Weber for constant and helpful
conversations.  I also thank the anonymous referee, whose useful suggestions to
a preliminary version lead to significant improvement of the paper.

\section{Symmetries}\label{sec:symmetry}

The tG and rGL families were discovered by relaxing the symmetries of classical
surfaces~\cite{fogden1993, fogden1999}.  It is then a good idea to first recall
some classical TPMSg3s, namely the 1-parameter families tP, rPD and H.  We
recommend the following way to visualize.
\begin{description}[leftmargin=*]
	\item[tP]  Consider a square catenoid, i.e.\ a minimal annulus bounded by two
		horizontal squares related by a vertical translation.  Then the order-2
		rotations around the edges of the squares generate a tP surface, and any tP
		surface can be generated in this way.

	\item[H]  Consider a triangular catenoid, i.e.\ a minimal annulus bounded by
		two horizontal equiangular triangles related by a vertical translation.
		Then the order-2 rotations around the edges of the triangles generate an H
		surface, and any H surface can be generated in this way.

	\item[rPD]  Similar to the H surfaces, the only difference being that one
		bounding triangle is reversed.
\end{description}

\begin{figure}[hb]
	\includegraphics[width=0.25\textwidth,valign=t]{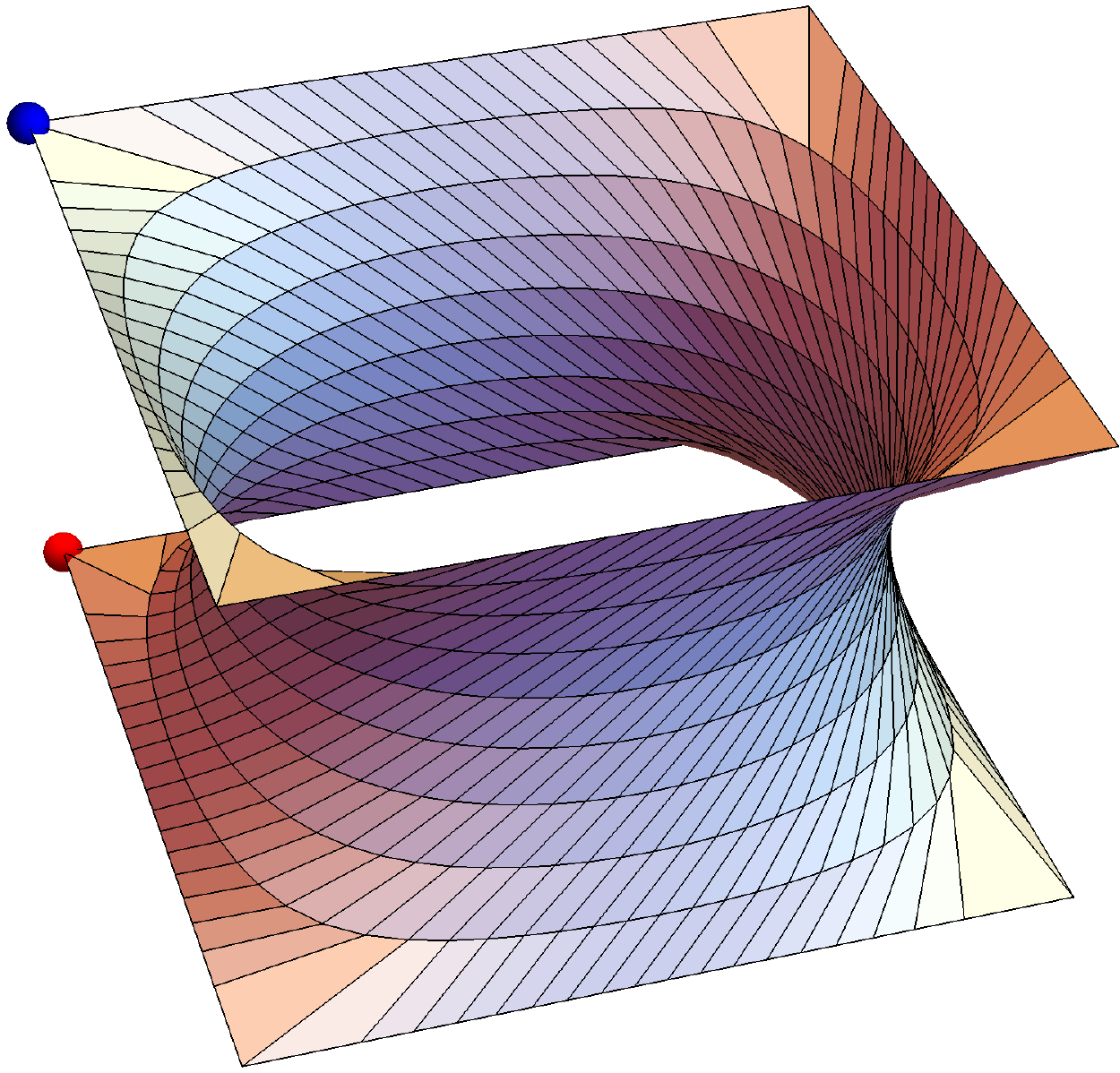}
	\includegraphics[width=0.25\textwidth,valign=t]{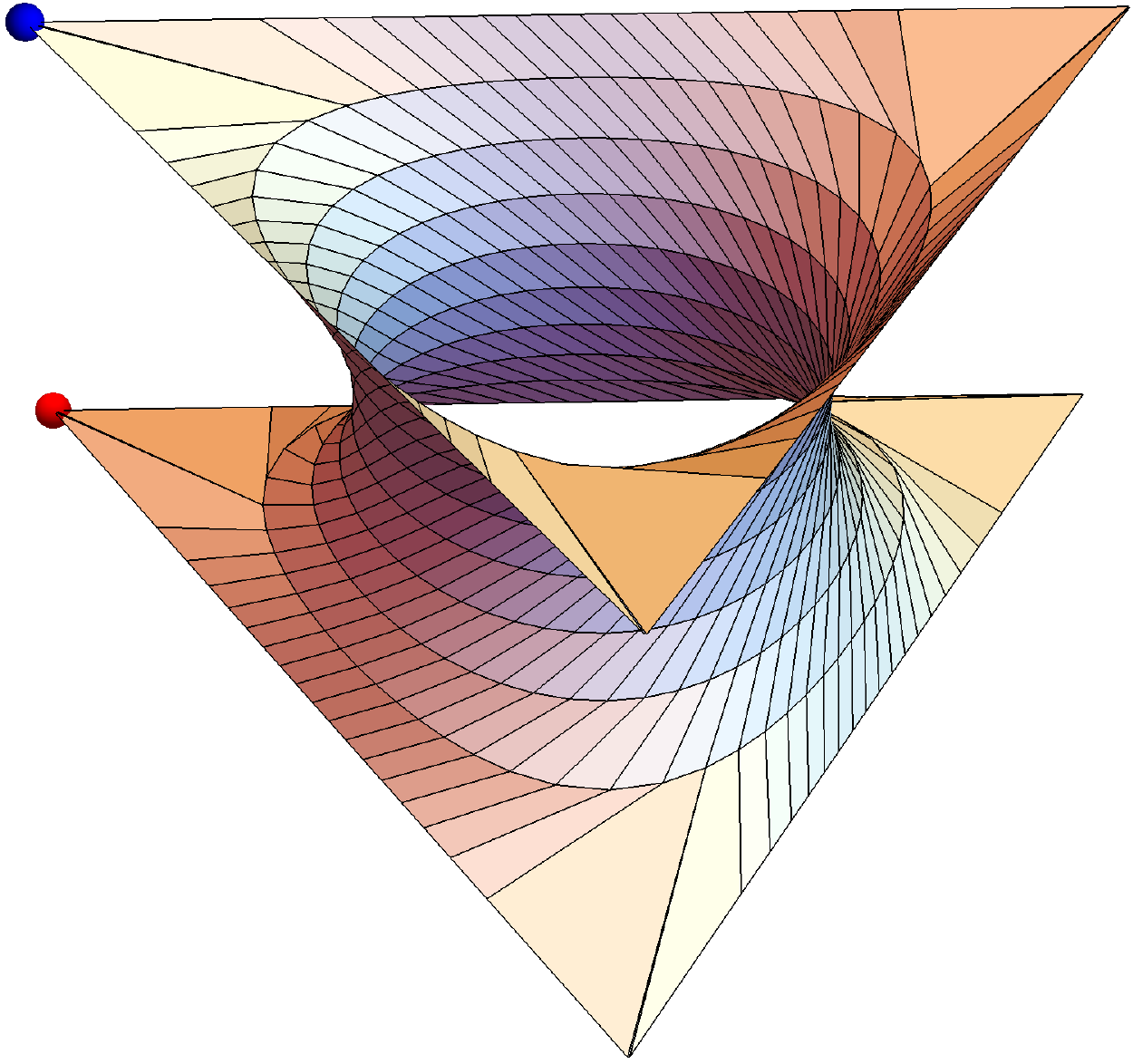}
	\includegraphics[width=0.25\textwidth,valign=t]{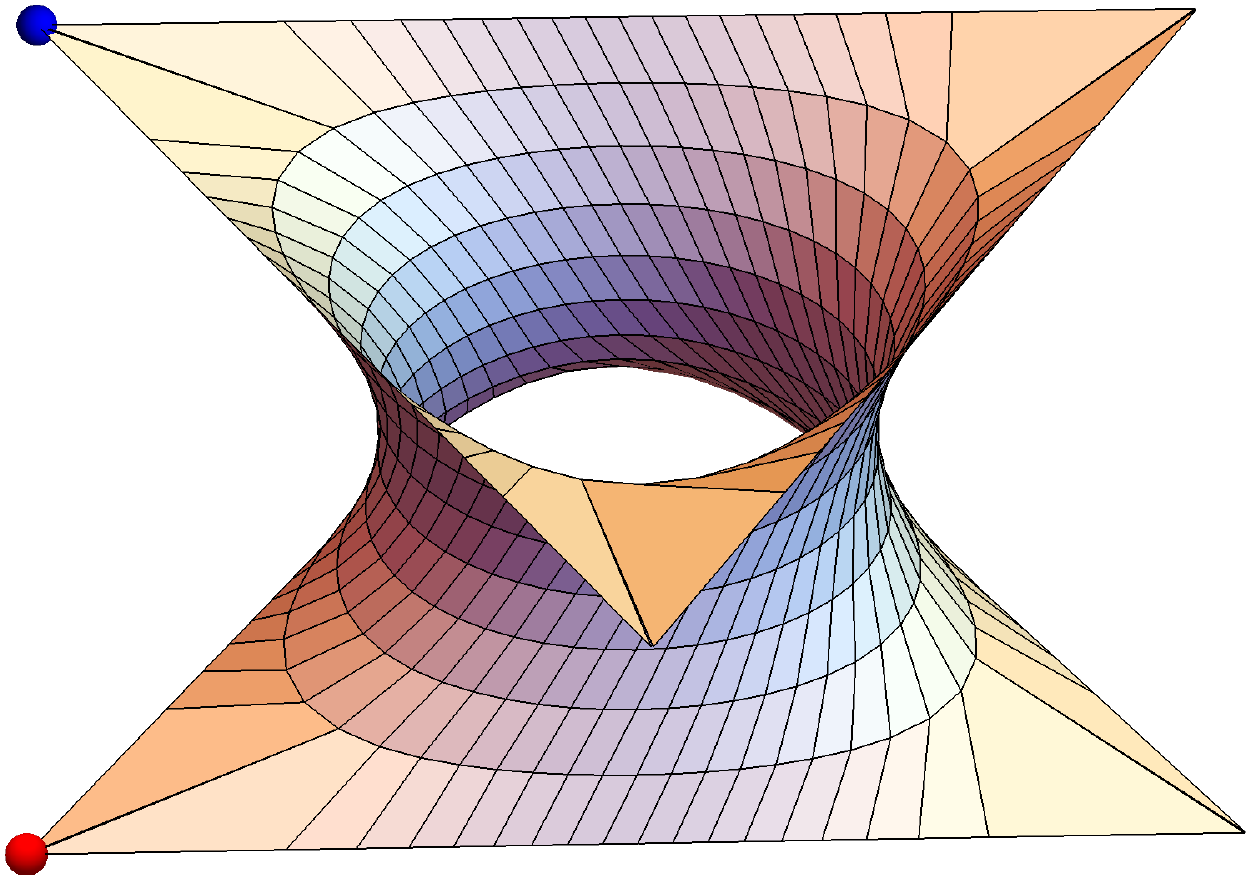}
	\caption{
		The catenoids that generate tP (left), H (middle) and rPD (right) surfaces.
	}
	\label{fig:catenoids}
\end{figure}

The catenoids that generate tP, H and rPD surfaces are shown in
Figure~\ref{fig:catenoids}.  The 1-parameter families are obtained by
vertically ``stretching'' the catenoids.  These families are, remarkably,
already known to Schwarz~\cite{schwarz1890}.  Reflections might be their most
famous and obvious symmetries.  But we highlight the following symmetries that
will help understanding tG and rGL surfaces.

\begin{description}[leftmargin=*]
	\item[Inversions] These are orientation-reversing symmetries shared by every
		TPMSg3.  Meeks~\cite{meeks1990} proved that every TPMSg3 $M/\Lambda$ has
		eight inversion centers at the points where Gaussian curvature vanishes.
		We will come back on that later.

		For tP surfaces, the eight inversion centers are at the middle points of
		the eight bounding edges.  For H and rPD surfaces, they are at the middle
		points of the six bounding edges and the two vertices (up to $\Lambda$) of
		the bounding triangles.

		The catenoids above can be seen as the quotient of $M/\Lambda$ over the
		inversions.  So the fundamental unit $M/\Lambda$ consists of two catenoids.
		See the top parts of Figures~\ref{fig:Ttorus} and~\ref{fig:Rtorus} for how
		their boundaries identify.

	\item[Order-2 rotations around horizontal axes] These are
		orientation-preserving symmetries that swap the bounding squares or
		triangles.  So their axes lie in the middle horizontal plane, parallel to
		the edges and diagonals of the bounding squares or triangles.  Up to the
		inversions, each tP surface has four such rotations, and each H or rPD
		surface has three.  

	\item[Rotations around vertical axes]  Up to the inversions, each of these
		TPMSg3s has exactly one such rotation.  The vertical axis passes through
		the centers of the bounding squares or triangles.  This symmetry is
		orientation-preserving.  Its order is 4 for tP and 3 for H and rPD.

	\item[Roto-reflections] A roto-reflection is composed of a rotation around
		the normal vector at a vertex of the bounding squares or triangles,
		followed by a reflection in the tangent plane at this vertex.  Note that
		these vertices have vertical normal vectors, hence are poles and zeros of
		the Gauss map.  These symmetries are orientation-reversing.  Their order is
		4 for tP and 6 for H and~rPD.
\end{description}

These symmetries are not independent.

\begin{itemize}
	\item The rotation around the vertical axis is the composition of two order-2
		rotations around horizontal axes.

	\item Conversely, in the presence of the rotation around the vertical axis,
		one order-2 rotational symmetry around a horizontal axis implies all the
		other.

	\item The roto-reflection arises from the screw symmetry and the inversions.

	\item For the H and rPD surfaces, \cite[Proposition~3.12]{weyhaupt2006}
		asserts that the order-3 rotation around the vertical axis implies the
		inversion symmetries in the vertices of the triangles.
\end{itemize}

Hence for TPMSg3s, the symmetries listed above can all be recovered from only
two symmetries: the rotational symmetry (of order 3 or 4) around the vertical
axis and an order-2 rotational symmetry around a horizontal axis.

\medskip

It was observed in~\cite[Lemma~4]{kgb1996} that, as one travels along the
associate family, all these symmetries are preserved, except for the rotational
symmetry around the vertical axis, which is reduced to a screw symmetry.
Recall that a screw transform is composed of a rotation and a translation in
the rotational axis.  The reduction of symmetry can be seen by noticing that
the horizontal rotation axes are no longer in the same horizontal plane, hence
their compositions induce screw transforms, instead of rotations.  Note that
the interdependences of the symmetries remain the same after this reduction.
In particular, the argument for~\cite[Proposition~3.12]{weyhaupt2006} applies
word by word to the order-3 screw symmetry.

The gyroid is in the associate family of Schwarz' P surface, which is in the
intersection of tP and rPD families.  Hence the gyroid admits a screw symmetry
around a vertical axis and an order-2 rotational symmetry around a horizontal
axis.  The order of the screw symmetry is 3 or~4, depending on which rotational
axes of P is placed vertically.  Similarly, the Lidinoid is in the associate
family of an H surface, hence admits an order-3 screw symmetry around a
vertical axis and an order-2 rotational symmetry around a horizontal axis.  As
we have discussed, all other symmetries listed above can be recovered from
these two.

\begin{remark}
	Interestingly, no other embedded surface is contained in the associate
	families of the tP, H or rPD surfaces.  This follows from a uniqueness
	statement hidden in the argument of Weyhaupt~\cite{weyhaupt2006}, as we will
	explain in Section~\ref{sec:discuss}.
\end{remark}

In the remaining of the paper, for the sake of a uniform treatment of Schwarz'
surfaces and the deformations of the gyroid, we will see rotations as screw
transforms with 0 translation.  The paper aims at the following two sets of
TPMSg3s
\begin{itemize}
	\item $\cT$ consists of embedded TPMSg3s that admit order-4 screw symmetries
		around vertical axes and order-2 rotational symmetries around horizontal axes.

	\item $\cR$ consists of embedded TPMSg3s that admit order-3 screw symmetries
		around vertical axes and order-2 rotational symmetries around horizontal axes.
\end{itemize}

Schwarz' tP surfaces and the gyroid belong to $\cT$.  The conjugates of the tP
surfaces, namely Schwarz' tD surfaces, belong to $\cT$.  We will see that
Schwarz' CLP surfaces also belong to $\cT$.  Our main Theorem~\ref{thm:maintG}
states that there exists another 1-parameter family in $\cT$, denoted by tG,
that contains the gyroid.

Schwarz' rPD, H surfaces, as well as the gyroid and the Lidinoid belong to
$\cR$.  Our main Theorem~\ref{thm:mainrGL} states that there exists another
1-parameter family in $\cR$, denoted by rGL, that contains the gyroid and the
Lidinoid.

\section{Weierstrass parameterization}\label{sec:weierstrass}

We use~\cite{meeks1990} for general reference about TPMSg3.

Let $M$ be a TPMS invariant under the lattice $\Lambda$.
Meeks~\cite{meeks1990} proved that $M/\Lambda$ is of genus three if and only if
it is \emph{hyperelliptic}, meaning that it can be represented as a two-sheeted
branched cover $M/\Lambda \to \mathbb{S}^2$ over the sphere.  The Gauss map $G$
provides such a branched covering.  If $M/\Lambda$ is of genus three, the
Riemann-Hurwitz formula implies eight branch points of $G$.  We call the
corresponding ramification points on $M/\Lambda$ \emph{hyperelliptic points}.
An inversion (in the ambient space $\R^3$) in any of the hyperelliptic points
induces an isometry that exchanges the two sheets.  

Let $z_1, \dots, z_8 \in \C$ be the stereographic projections of the Gauss map
at the branch points.  Now consider the hyperelliptic Riemann surface of genus
three defined by
\[
	w^2 = P(z) = \Pi_{i=1}^8(z-z_i).
\]
Then we have the following Weierstrass parameterization for $M/\Lambda$:
\begin{equation}\label{eq:weierstrass1}
	(z,w) \mapsto \re\int^{(z,w)} \frac{(1-z^2, (1+z^2)i, 2z)}{w}dz.
\end{equation}
This parameterization has been widely used for constructing TPMSg3s, ranging
from the classical example of Schwarz'~\cite{schwarz1890} to the tG and rGL
families discovered in~\cite{fogden1993, fogden1999}.

We use the following form of Weierstrass parameterization that traces back to
Osserman~\cite{osserman1964},
\begin{equation}\label{eq:weierstrass2}
	\Sigma \ni p \mapsto \re\int^p (\omega_1, \omega_2, \omega_3) = \re\int^p
	\Big(\frac{1}{2}\big(\frac{1}{G}-G\big), \frac{i}{2}\big(\frac{1}{G}+G\big),
	1\Big) dh \in \R^3.
\end{equation}
Here $\Sigma$ is a Riemann surface, on which $\omega_1, \omega_2, \omega_3$
must all be holomorphic.  In particular, the holomorphic differential $\omega_3
= dh$ is called the \emph{height differential}.  $G$ denotes (the stereographic
projection of) the Gauss map.  By comparing \eqref{eq:weierstrass1} and
\eqref{eq:weierstrass2}, one sees the correspondence $G = z$ and $dh = z dz/w$
(up to a scaling factor $2$).  The triple $(\Sigma, G, dh)$ is called
Weierstrass data.

The purpose of this section is to determine the Weierstrass data for surfaces
in $\cT$ and $\cR$ from their symmetries.  In particular, $\Sigma$ will be a
branched torus, whose branch points are determined in Lemmas~\ref{lem:branchT}
and~\ref{lem:branchR}.  The height differential is determined in
Lemma~\ref{lem:dh}, and the Gauss map is explicitly given in Lemma~\ref{lem:G}.

\subsection{Weierstrass data on tori}

Surfaces in $\cT$ and $\cR$ all admit screw symmetries.  The following
proposition justifies our choice of branched tori for $\Sigma$.
\begin{proposition}
	If a TPMSg3 $M$ admits a screw symmetry $S$, then $(M/\Lambda)/S$ is of genus
	one.
\end{proposition}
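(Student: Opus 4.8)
The plan is to realize the genus of $Y := (M/\Lambda)/S$ as the dimension of the space of $S$-invariant holomorphic $1$-forms on $X := M/\Lambda$, and to read this dimension off the Weierstrass data.

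First one checks that $S$ descends to a finite-order conformal automorphism $\bar S$ of the compact Riemann surface $X$. Since $S(M)=M$, it normalizes the lattice $\Lambda$ of all translational symmetries of $M$: conjugation sends $T_\lambda$ to $T_{R\lambda}$, where $R$ is the rotational part of $S$, so $R\Lambda=\Lambda$, and $S$ induces $\bar S\in\mathrm{Aut}(X)$. Writing $S=T_vR$ with $v$ a vertical vector and $R$ of order $m$, we have $S^j=T_{jv}R^j$, so $\bar S^j=\mathrm{id}$ forces $R^j=\mathrm{id}$; and $S^m=T_{mv}$, being a translational symmetry of $M$, lies in $\Lambda$. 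Hence $\bar S$ has order exactly $m$, and $(M/\Lambda)/S=X/\langle\bar S\rangle=Y$ is a compact Riemann surface. (If $m=1$ then $S\in\Lambda$ and there is nothing to prove, so we take $m\ge 2$; for the surfaces of this paper $m\in\{3,4\}$.)

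Next one records how $\bar S$ acts on the Weierstrass data. By Meeks' theorem $X$ is hyperelliptic of genus three, with hyperelliptic involution $\iota$ and Gauss map $G\colon X\to\mathbb S^2=\mathbb P^1$ realizing the double cover $X\to X/\langle\iota\rangle$. Normalize the stereographic projection so that $0,\infty\in\mathbb P^1$ are the poles lying on the (vertical) axis of $S$; then the rotational part of $S$ acts on $\mathbb P^1$ by $z\mapsto\zeta z$ with $\zeta:=e^{2\pi i/m}\ne 1$, and since the Gauss map intertwines $S$ with the rotation that its linear part induces on $\mathbb S^2$, we get $\bar S^{*}G=\zeta\,G$. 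Moreover $S$ shifts the third coordinate of $\R^3$ by the constant $v$, hence preserves $dx_3=\mathrm{Re}(dh)$ and therefore the height differential, $\bar S^{*}dh=dh$. Finally, on a hyperelliptic surface of genus three the holomorphic $1$-forms are spanned by $G^{-1}dh,\ dh,\ G\,dh$ (the classical basis $dz/w,\ z\,dz/w,\ z^2\,dz/w$ on the model $w^2=\prod_i(z-z_i)$, under $G=z$ and $dh=z\,dz/w$ up to scale). Consequently $\bar S$ acts on $H^0(X,\Omega^1_X)=\langle G^{-1}dh,\,dh,\,G\,dh\rangle$ with eigenvalues $\zeta^{-1},\,1,\,\zeta$.

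To conclude, pullback identifies $H^0(Y,\Omega^1_Y)$ with the $\bar S$-invariant holomorphic $1$-forms on $X$, so $g(Y)=\dim_{\C}H^0(X,\Omega^1_X)^{\bar S}$; since $\zeta\ne 1$ (hence also $\zeta^{-1}\ne 1$), the eigenvalue $1$ occurs with multiplicity exactly one, realized by $dh$, whence $g(Y)=1$. The only substantive inputs are the two equivariance identities $\bar S^{*}G=\zeta G$ and $\bar S^{*}dh=dh$, together with the standard hyperelliptic description of $H^0(X,\Omega^1_X)$; I expect pinning down $\bar S^{*}dh=dh$ exactly (with no unimodular ambiguity) to be the one point requiring a little care. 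Alternatively one can run Riemann--Hurwitz for $X\to Y$: the fixed points of the nontrivial powers of $\bar S$ all lie over $0$ and $\infty$ because $G$ intertwines $\bar S$ with $z\mapsto\zeta z$, and $\bar S^{*}dh=dh$ together with regularity of the immersion pins down the local vanishing orders of $dh$ there, forcing $\sum_P(e_P-1)=4=2\cdot 3-2$ and hence $\chi(Y)=0$.
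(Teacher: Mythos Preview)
Your proof is correct and takes a genuinely different route from the paper's. The paper argues by elimination: $dh$ descends to a nonzero holomorphic $1$-form on $Y$, ruling out $g(Y)=0$; Riemann--Hurwitz with $g=3$ and $n\ge 2$ forces $g(Y)\le 2$; and the case $g(Y)=2$ is dispatched by a direct computation on the hyperelliptic model (if $n=2$ then $G\circ\bar S=-G$, so $P(z)=Q(z^2)$ and the quotient curve $w^2=Q(u)$ is visibly of genus~$1$, a contradiction). You instead compute $g(Y)$ in one stroke as $\dim H^0(X,\Omega^1_X)^{\bar S}$, diagonalizing the action of $\bar S$ on the hyperelliptic basis $\{G^{-1}dh,\,dh,\,G\,dh\}$ with eigenvalues $\zeta^{-1},1,\zeta$. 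This avoids case analysis and works uniformly for every order $m\ge 2$; the paper's approach trades that uniformity for not needing the identification $H^0(Y,\Omega^1_Y)\cong H^0(X,\Omega^1_X)^{\bar S}$.

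One remark on the point you flagged: there is in fact no unimodular ambiguity in $\bar S^{*}dh=dh$. Since $S$ shifts $x_3$ by a constant, $\mathrm{Re}(\bar S^{*}dh)=\mathrm{Re}(dh)$ as real $1$-forms on $X$; but a holomorphic $1$-form is determined by its real part (the difference would be holomorphic with identically vanishing real part, hence zero). So the equality is exact. Your alternative Riemann--Hurwitz sketch at the end is also viable, but the eigenvalue computation is already decisive and more transparent.
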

Recall that we consider rotational symmetries as special screw symmetries, for
which the proposition was proved in~\cite[Proposition~2.8]{weyhaupt2008} but
with minor flaws.  Hence we include a proof here for completeness.
\begin{proof}
	The height differential $dh$ is invariant under $S$, hence descends
	holomorphically to the quotient $(M/\Lambda)/S$.  Since there is no
	holomorphic differential on the sphere, the genus of $(M/\Lambda)/S$ cannot
	be $0$.

	Recall the Riemann--Hurwitz formula
	\[
		g = n(g' - 1) + 1 + B/2.
	\]
	In our case, $g=3$ is the genus of $M/\Lambda$, $g'$ is the genus of
	$(M/\Lambda)/S$, $n$ is the degree of the quotient map, and $B$ is the total
	branching number.  Since the order of a screw symmetry is at least two, we
	conclude immediately that $g' < 3$.

	It remains to eliminate the case $g'=2$.  Weyhaupt's argument
	in~\cite{weyhaupt2008} did not accomplish this task.  We proceed as
	follows\footnotemark.  Without loss of generality, we may assume the screw
	axis to be vertical.  As Weyhaupt argued, when $g'=2$ we have necessarily
	$n=2$, in which case $G \circ S = -G$.  In the
	parameterization~\eqref{eq:weierstrass1}, since $dh= zdz/w$ is invariant
	under $z\mapsto-z$, so must the polynomial $w^2=P(z)$, hence we can write
	$P(z) = Q(z^2)$, where $Q$ is a polynomial of degree 4.  Then $w^2 = Q(z)$
	defines the quotient surface, whose genus is $g'=1$.  So $g'=1$ is the only
	possibility.
\end{proof}

\footnotetext{This argument is communicated by Matthias Weber.}

The height differential $dh$, being a holomorphic 1-form on the torus, must be
of the form $r e^{-i\theta} dz$.  Varying the modulus $r$ only results in a
scaling.  Varying the argument $\theta$ gives the associate family, so we call
$\theta$ the \emph{associate angle}.  The following lemma then applies to all
TPMSg3s in $\cT$ and $\cR$.
\begin{lemma}\label{lem:dh}
	If a TPMSg3 $M/\Lambda$ with a screw symmetry $S$ is represented on the
	branched cover of the torus $(M/\Lambda)/S$, then up to the scaling, the
	height differential $dh$ must be the lift of $e^{-i\theta}dz$ (note the
	sign!).
\end{lemma}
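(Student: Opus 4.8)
The plan is to push $dh$ down to the torus $T := (M/\Lambda)/S$ and to use that a genus-one surface carries an essentially unique holomorphic $1$-form. First I would note that a screw motion about a vertical axis acts on $\R^3$ by $(x_1,x_2,x_3)\mapsto(R(x_1,x_2),\,x_3+t)$ with $R$ a rotation, so $S$ preserves the third coordinate $\re\int dh=x_3$ up to an additive constant; hence $\re(S^*dh)=\re(dh)$. Since $S$ is orientation-preserving (as recorded for the vertical rotations in Section~\ref{sec:symmetry}), it is a holomorphic automorphism of $\Sigma=M/\Lambda$, so $S^*dh$ is again holomorphic; and two holomorphic $1$-forms with the same real part agree, because $\re(f\,dz)=0$ identically forces $f\equiv 0$. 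Thus $S^*dh=dh$, so $dh$ descends to a meromorphic $1$-form $\alpha$ on $T$ with $\pi^*\alpha=dh$, where $\pi\colon\Sigma\to T$ is the quotient map.

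Next I would check that $\alpha$ is holomorphic. If $\alpha$ had a pole of order $m\ge 1$ at some $q\in T$, then at any $p\in\pi^{-1}(q)$ of ramification index $e$ the pullback $\pi^*\alpha$ would, in local coordinates $\eta=\zeta^e$, acquire a pole of order $e(m-1)+1\ge 1$, contradicting the holomorphy of $dh$. Hence $\alpha\in H^0(T,\Omega^1)$, and $\alpha\not\equiv 0$ since $dh\not\equiv 0$. By the preceding proposition $T=\C/\Gamma$ has genus one, so $\alpha=c\,dz$ for a nonzero constant $c$, with $z$ the standard coordinate on $T$. Writing $c=r e^{-i\theta}$ with $r>0$ --- the sign in the exponent being the convention fixed just before the lemma, and the point of the parenthetical remark in its statement --- the homothety $M\mapsto r^{-1}M$ stays in $\cT$ (resp.\ $\cR$) and multiplies $dh$ by $r^{-1}$; after this normalization $dh=\pi^*(e^{-i\theta}\,dz)$ is the lift of $e^{-i\theta}\,dz$.

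I do not expect a real obstacle here: the lemma is essentially formal once the branched-torus picture and the genus computation of the preceding proposition are available. The only places that need care are the implication $\re(S^*dh)=\re(dh)\Rightarrow S^*dh=dh$, which genuinely uses that $S$ is orientation-preserving, and the sign bookkeeping, which is merely a convention at this stage but has to be tracked consistently through the period computations of Section~\ref{sec:period} and the explicit expressions in terms of $\sn$ derived later.
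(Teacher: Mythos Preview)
Your proposal is correct and follows the same route as the paper: push $dh$ down to the genus-one quotient and use that $H^0(T,\Omega^1)$ is one-dimensional. The paper treats the lemma as essentially immediate --- the invariance $S^*dh=dh$ is asserted in the proof of the preceding proposition (``The height differential $dh$ is invariant under $S$, hence descends holomorphically to the quotient''), and the lemma itself is stated without further argument beyond the sentence ``The height differential $dh$, being a holomorphic $1$-form on the torus, must be of the form $re^{-i\theta}dz$'' --- whereas you spell out both why $S^*dh=dh$ (via the real-part argument and holomorphy of $S$) and why the descended form has no poles. Your extra care is not misplaced, but it is not a different approach.
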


\subsection{Locating branch points}

By~\cite[Lemma~2(ii)]{kuribayashi1979}, we know that the order of the screw
symmetry $S$ must be $2$, $3$ or $4$.  This follows easily from a result of
Hurwitz~\cite{hurwitz1932}, cited in~\cite{kuribayashi1979} as Lemma~1.
Moreover, if the order of $S$ is prime, the following formula
from~\cite{farkas1992} allows us to calculate the number of fixed points: 
\[
	|\operatorname{fix}(S)| = 2 + \frac{2g - 2g'\operatorname{order}(S)}{\operatorname{order}(S) - 1}.
\]
In particular, a screw symmetry of order 2 fixes exactly four points, and a
screw symmetry of order~3 fixes exactly two points.  The following lemma
follows from the same argument as in~\cite[Lemmas~3.9, 3.13]{weyhaupt2006}.

\begin{lemma}\leavevmode
	\begin{itemize}
		\item If a TPMSg3 $M/\Lambda$ admits a screw symmetry $S$ of order 2, then
			$G^2$ descends to an elliptic function on the torus $(M/\Lambda)/S$ with
			two simple zeros and two simple poles.

		\item If $M/\Lambda$ admits a screw symmetry $S$ of order 3, then $G^3$
			descends to an elliptic function on the torus $(M/\Lambda)/S$ with a
			double-order zero and a double-order pole.
	\end{itemize}
\end{lemma}

We now try to locate the branch points of the covering map for surfaces in
$\cT$ and $\cR$.  Since the ramification points on $M/\Lambda$ are all poles
and zeros of the Gauss map, our main tool is naturally Abel's Theorem, which
states that the difference between the sum of poles and the sum of zeros
(counting multiplicity) is a lattice point.

A surface in $\cT$ admits a screw symmetry $S$ of order 4, $S^2$ is then a
screw symmetry of order 2.  Recall that $S$ and the inversions induce
roto-reflectional symmetries of order 4 centered at the poles and zeros of the
Gauss map.  They descend to the quotient torus $(M/\Lambda)/S^2$ as inversions
in the branch points of the covering map.

\begin{lemma}[Compare~{\cite[Lemma 3.10]{weyhaupt2006}}]\label{lem:branchT}
	Let $M$ be a TPMSg3 admitting a screw symmetry $S$ of order 4, hence
	parameterized on a branched double cover of the torus $(M/\Lambda)/S^2$.  If
	one branch point is placed at $0$, then the other branch points must be
	placed at the three 2-division points of the torus.
\end{lemma}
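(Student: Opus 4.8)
The plan is to use Abel's Theorem together with the roto-reflectional symmetries to pin down the four branch points. Since $M$ is parameterized on a branched double cover of the torus $T = (M/\Lambda)/S^2$, and the branch points are exactly the images on $T$ of the poles and zeros of the Gauss map $G$, I would begin from the previous lemma: because $S^2$ has order $2$, the function $G^2$ descends to an elliptic function on $T$ with two simple zeros and two simple poles. So there are indeed four branch points, which I will call $b_1, b_2, b_3, b_4$, corresponding respectively to the two zeros and two poles of $G$ (on $M/\Lambda$, a pole or zero of $G$ at a hyperelliptic point). Place $b_1 = 0$ by the freedom in choosing the origin of the torus.

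The first key step is to exploit the order-$4$ roto-reflections. As noted in the paragraph preceding the lemma, the order-$4$ screw symmetry $S$ composed with an inversion induces a roto-reflection of order $4$ at each pole/zero of $G$; on the quotient $T$ these descend to inversions (elliptic involutions of $T$) centered at the branch points $b_i$. An inversion of the torus centered at $b_i$ is the map $z \mapsto 2b_i - z$. Since each such inversion must permute the set $\{b_1, b_2, b_3, b_4\}$ of branch points (the ramification data is preserved), the inversion centered at $b_1 = 0$, namely $z \mapsto -z$, permutes $\{0, b_2, b_3, b_4\}$. It fixes $0$, so it permutes $\{b_2, b_3, b_4\}$ among themselves; a sign-change permutation of three points on a torus either fixes each (forcing each $b_i$ to be a $2$-division point, $2b_i \equiv 0$) or swaps two of them. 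I would rule out the swap case and conclude that each $b_i$ is a $2$-division point.

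To do this I would combine the constraint from Abel's Theorem with the involution constraints. Abel's Theorem applied to $G^2$ (or directly to $G$, accounting for multiplicities on $M/\Lambda$) says that the sum of the zeros minus the sum of the poles is a lattice point; equivalently, $b_1 + b_2 + b_3 + b_4 \equiv 0$ on $T$ after a suitable pairing, giving $b_2 + b_3 + b_4 \equiv 0$. Now suppose the involution $z \mapsto -z$ swaps, say, $b_2$ and $b_3$ and fixes $b_4$; then $b_3 \equiv -b_2$ and $2b_4 \equiv 0$, and Abel gives $b_4 \equiv 0 = b_1$, a contradiction since the four branch points are distinct. Hence $z \mapsto -z$ fixes all of $b_2, b_3, b_4$, so $2b_2 \equiv 2b_3 \equiv 2b_4 \equiv 0$: the three remaining branch points are the three nonzero $2$-division points of $T$. (One still checks they are pairwise distinct and exhaust all three half-periods, which follows from distinctness of the $b_i$ together with there being exactly three nonzero $2$-division points.)

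The main obstacle I anticipate is making the roto-reflection-to-torus-inversion correspondence precise: one must verify that the order-$4$ roto-reflectional symmetry of $M/\Lambda$ really does descend to a well-defined conformal involution of $T = (M/\Lambda)/S^2$ fixing a branch point, and that its action on $T$ is the standard elliptic involution $z \mapsto 2b_i - z$ rather than a translation or some other automorphism. This is where the cited argument of Weyhaupt (\cite[Lemma~3.10]{weyhaupt2006}) does the real work, and I would follow it: the roto-reflection is orientation-reversing on $\R^3$ but, being composed of the screw symmetry and an inversion, acts \emph{holomorphically} on the Riemann surface; it normalizes $S^2$ (indeed commutes with it up to the appropriate power), so it descends to $T$; and since it has a fixed point and finite order on a genus-one surface, its descent is an elliptic involution, forcing the stated form. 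Everything else is bookkeeping with Abel's Theorem.
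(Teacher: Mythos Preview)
Your approach matches the paper's---descend the roto-reflection to the torus involution $z \mapsto -z$ and combine with Abel---but there is a real gap in the Abel step. For $G^2$ with simple zeros $b_1,b_2$ and simple poles $b_3,b_4$, Abel's Theorem gives $b_1+b_2-b_3-b_4 \equiv 0$, i.e.\ $b_2 \equiv b_3+b_4$ once $b_1=0$; your relation $b_1+b_2+b_3+b_4\equiv 0$ is not a consequence of Abel, and ``after a suitable pairing'' does not repair it. With the correct relation your swap-case argument no longer yields a contradiction: take zeros at $\{0,\tfrac14\}$ and poles at $\{\tfrac34,\tfrac12\}$ on $\C/\langle 1,\tau\rangle$. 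Abel holds ($\tfrac14\equiv\tfrac34+\tfrac12$), the set is invariant under $z\mapsto -z$ (which swaps the zero $\tfrac14$ with the pole $\tfrac34=-\tfrac14$ and fixes $0,\tfrac12$), yet the branch points are not all $2$-division points.

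What excludes such configurations---and what the paper actually uses---is that the involution $z\mapsto -z$ sends poles to poles and zeros to zeros. This is because the roto-reflection acts on the Gauss map by $G\mapsto -iG$ (a rotation of the target sphere fixing $0$ and $\infty$), so types are preserved. Granting this, if a pole $p$ is not $2$-torsion then the other pole must be $-p$, whence Abel gives $b_2\equiv p+(-p)=0$, contradicting distinctness. Hence both poles are half-periods, and then Abel forces $b_2$ to be the remaining half-period. Your closing paragraph correctly flags the descent as delicate, but the ingredient you are missing is this type-preservation, not merely that the descent is an elliptic involution.
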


\begin{proof}
	We may assume that the branch point at $0$ corresponds to a zero of $G^2$.
	If a pole $p$ is not at any 2-division points, $-p$ must be a different pole
	by the roto-reflection.  Then Abel's Theorem forces the other zero to be at a
	lattice point, which is absurd.  So both poles must be placed at 2-division
	points.  Then Abel's Theorem forces the other zero at the remaining
	2-division point.
\end{proof}

Note that the screw symmetry of order 4 descends to the quotient torus as the
translation that swaps the zeros.

\medskip

\begin{lemma}[{\cite[Lemma 3.13]{weyhaupt2006}}]\label{lem:branchR}
	Let $M$ be a TPMSg3 admitting a screw symmetry $S$ of order 3, hence
	parameterized on a branched triple cover of the torus $(M/\Lambda)/S$.  If
	one branch point is placed at $0$, then the other branch points must be
	placed at the three 2-division points of the torus.
\end{lemma}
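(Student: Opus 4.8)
The plan is to mimic the proof of Lemma~\ref{lem:branchT}, again using the roto-reflectional symmetries together with Abel's Theorem, but now adapted to the order-3 situation where the branched cover has degree $3$ rather than $2$. First I would recall the structure in play: $M$ admits a screw symmetry $S$ of order $3$, and by the previous lemma $G^3$ descends to an elliptic function on $\Sigma = (M/\Lambda)/S$ with a single double-order zero and a single double-order pole. Thus there are exactly two ramification points of $G$ on the torus coming from $G$'s zero and pole; to account for all eight branch points of the hyperelliptic cover $M/\Lambda \to \mathbb{S}^2$ one must be careful about how they distribute among the three sheets. As in Section~\ref{sec:symmetry}, the screw symmetry $S$ together with the inversions in the hyperelliptic points produces roto-reflectional symmetries of order $6$ centered at the poles and zeros of the Gauss map, and these descend to $\Sigma$ as an involution $z \mapsto -z$ (after normalizing the remaining zero to lie at $0$).

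Second, I would set up the bookkeeping on $\Sigma$. Place the double-order zero of $G^3$ at $0$; the involution $z\mapsto -z$ fixes $0$. The double-order pole of $G^3$ sits at some point $q$, and since the roto-reflection exchanges zeros and poles of $G$ while the induced involution is $z\mapsto -z$, the pole locus must be invariant under $z\mapsto-z$. If $q$ is not a $2$-division point, then $-q \ne q$ would have to be another pole; but $G^3$ has only one pole (of order two), a contradiction. Hence $q$ is one of the three $2$-division points. This already forces two of the four ``branch point'' positions (in Weyhaupt's counting on $\Sigma$) to be $0$ and $q$; the remaining two must be the other two $2$-division points. Here the argument is slightly subtler than in the order-4 case because a single elliptic function $G^3$ with one double zero and one double pole does not by itself see all four marked points — one needs the interplay with the hyperelliptic involution on $M/\Lambda$ and Abel's Theorem applied to the full Gauss map $G$, not just to $G^3$, to pin down that the ramification data of the triple cover forces branch points exactly at all four $2$-division points.

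Concretely, the Abel's Theorem step runs as follows: on $M/\Lambda$ the Gauss map $G$ has its zeros and poles at the hyperelliptic points lying over $0$ and over $q$ on $\Sigma$; the screw symmetry $S$ of order $3$ permutes the three sheets cyclically, so the divisor of $G$ (or of $dh$, or of the appropriate combination appearing in~\eqref{eq:weierstrass2}) is $S$-invariant. Pushing down and invoking that the difference of the zero-sum and pole-sum of any elliptic function on $\Sigma$ is a lattice point, together with the constraint from the roto-reflection $z\mapsto-z$, forces the remaining marked points to the two remaining $2$-division points: if one were at a generic $p$, then $-p$ would be forced to be another, and Abel would then place a leftover point at a lattice point, which is impossible. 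I expect the main obstacle to be precisely this point: unlike the order-$4$ case where $G^2$ conveniently has two simple zeros and two simple poles (so Abel's Theorem applies transparently to four points), here $G^3$ has a \emph{double} zero and a \emph{double} pole, so one must instead argue with the ramification divisor of the degree-$3$ cover and check that the $2$-division points are the only configuration compatible with both the $\mathbb{Z}/3$ deck action and the order-$6$ roto-reflection. I would handle this by reducing, exactly as Weyhaupt does in~\cite[Lemma~3.13]{weyhaupt2006}, to counting: the cover has degree $3$, total branching $B$ determined by Riemann--Hurwitz ($3 = 3(1-1)+1+B/2$ gives $B=4$), the roto-reflection pairs branch points as $p \leftrightarrow -p$, the point $0$ is self-paired, and a short case analysis of where the branching can sit — combined with the one application of Abel's Theorem above — leaves only the claimed configuration. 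Since the excerpt explicitly says this lemma ``follows from the same argument as'' the earlier Weyhaupt lemmas, I would keep the write-up short, essentially quoting the structure of the proof of Lemma~\ref{lem:branchT} with the order-$3$ modifications noted.
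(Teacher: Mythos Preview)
The paper's proof is one sentence: with the double zero of $G^3$ placed at $0$ and the double pole at some point $p$, Abel's Theorem gives $2\cdot 0 - 2p \in \Lambda$, hence $2p \in \Lambda$, so $p$ is a $2$-division point. That is the entire argument, and you miss it completely.

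Your roto-reflection argument in the second paragraph does reach the same conclusion for the pole location, so that part is a valid (if heavier) alternative. But it is unnecessary here precisely because the order-$3$ situation is \emph{easier} than the order-$4$ one, not harder: in Lemma~\ref{lem:branchT}, $G^2$ has two simple zeros and two simple poles, and Abel alone does not pin them all down, which is why the paper invokes the roto-reflection there. In the present lemma $G^3$ has a single zero and a single pole (each double), so Abel applies immediately.

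The more serious problem is that from ``This already forces two of the four `branch point' positions\ldots'' onward you are chasing branch points that do not exist. There are only \emph{two} branch points on the torus, not four. Your Riemann--Hurwitz count $B=4$ is correct, but $B$ is the total branching number, not the number of branch points: two fully ramified points of index $3$ contribute $2+2=4$ and account for all of it. The fixed-point formula quoted just above the lemma already says $|\operatorname{fix}(S)|=2$. So there is no ``remaining two'', no case analysis, no need to pass to the hyperelliptic cover or apply Abel to $G$ itself. The lemma's phrasing (``the other branch points \ldots\ at the three $2$-division points'') is admittedly misleading---it is visibly copied from the order-$4$ statement---but the paper's proof makes clear that only one further point is being located, at \emph{one} of the $2$-division points.
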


\begin{proof}
	We may assume that the branch point at $0$ corresponds to a double-order zero
	of $G^3$.  Then the Abel's Theorem forces the double-order pole to be at a
	2-division point.
\end{proof}

\subsection{An explicit expression for the Gauss map}

The locations of poles and zeros determine an elliptic function up to a complex
constant factor.  Elliptic functions with poles and zeros at lattice points and
2-division points are famously given by Jacobi elliptic functions.  In
particular, $\sn(z;\tau)$ is an elliptic function with periods $4K$ and $2iK'$.
Its zeros lie at $0$ and $2K$, and poles at $iK'$ and $2K+iK'$.  Here, $K$ is
the complete elliptic integral of the first kind with modulus $m =
\lambda(2\tau)$, $\lambda$ is the modular lambda function, and $K' := -2 i \tau
K$.

We use~\cite{lawden1989} as the major reference for elliptic functions.  Other
useful references include~\cite{bowman1961, byrd1971, borwein1998} and NIST's
Digital Library of Mathematical Functions~\cite{DLMF}.

\begin{remark}
	Note that we do not define $K'(m) = K(1-m)$, and our $\tau$ is half of the
	traditional definition but coincides with the definition on page 226
	of~\cite{lawden1989}.
\end{remark}

\medskip

Since $\sn$ has the expected zeros and poles, we may write the Gauss map $G$
for a $\cT$ surface as
\[
	G^2 =\rho \sn(4 K z;\tau).
\]
In particular, the factor $4K$ on the variable $z$ brings the defining torus to
$\C/\langle 1, \tau \rangle$, which is more convenient for us.  The zeroes of
$G^2$ are at $0$ and $1/2$, and the poles at $\tau/2$ and $(\tau+1)/2$.  The
complex factor $\rho$ is known as the L\'opez--Ros factor~\cite{lopez1991} in
the minimal surface theory.  Varying its argument only results in a rotation of
the surface in the space, hence only the norm $|\rho|$ concerns us.

The multi-valued function on $\C/\langle 1, \tau\rangle$ is given by $G = [\rho
\sn(4 K z;\tau)]^{1/2}$.  We take the branch cuts of the square root along the
segments $[0,\tau/2]$ and $[(\tau+1)/2,\tau+1/2]$, compatible
with~\cite{weyhaupt2006}; see Figure~\ref{fig:Ttorus}.  None of the branch
points is hyperelliptic point.  Instead, the symmetry $\sn(2K-z) = \sn(z)$
reveals four other inversion centers at $1/4$, $3/4$, $1/4+\tau/2$ and
$3/4+\tau/2$, and they lift to eight hyperelliptic points.

\begin{figure}[htb]
	\includegraphics[width=0.4\textwidth]{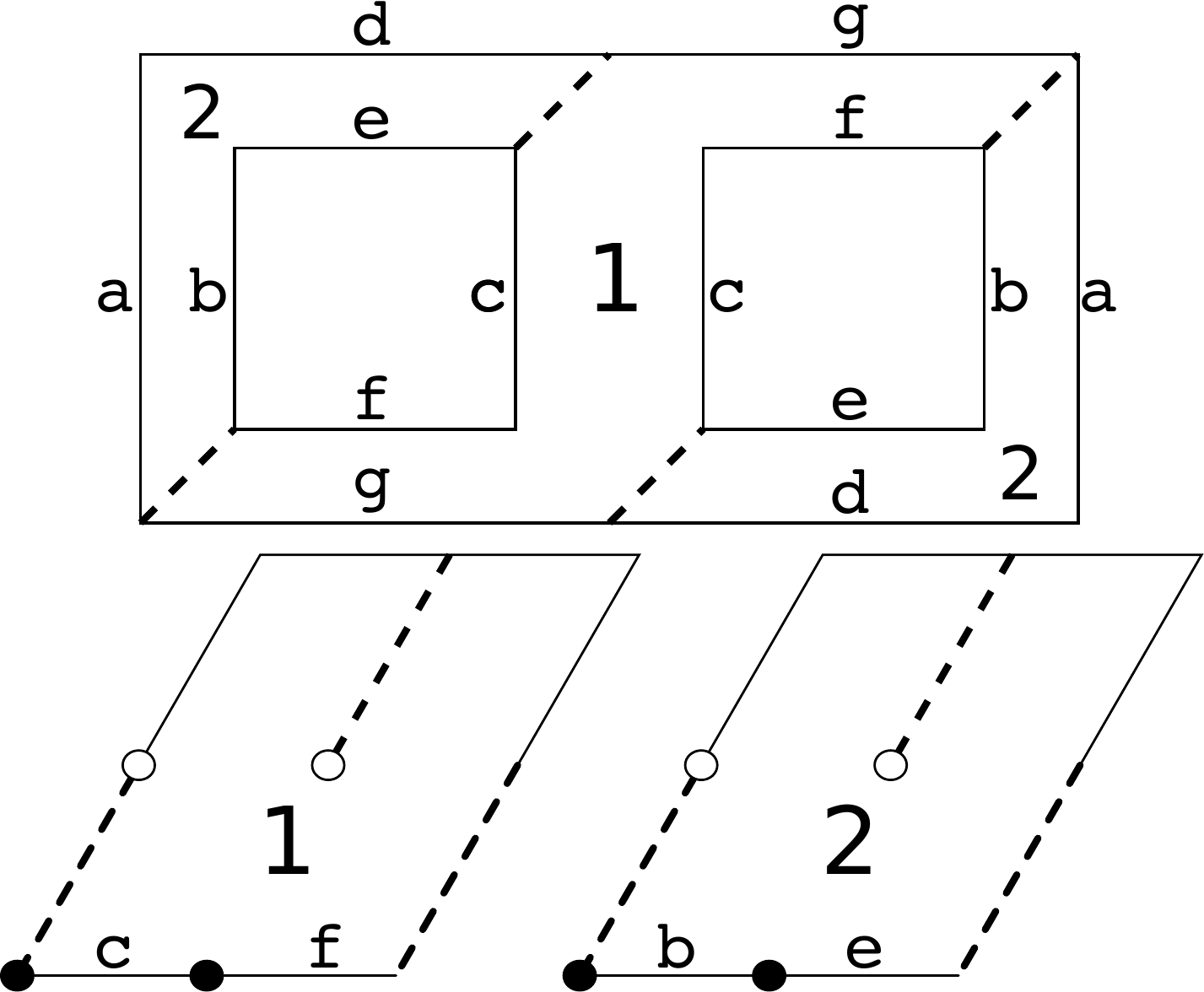}
	\caption{
		Top: Identifying segments with the same labels yields a surface of genus
		three.  The numbered regions are fundamental domains of the screw symmetry
		of order 3.  Bottom: Branch cuts in the branched torus for $\cT$ surfaces.
		Solid circles are zeros;  empty circles are poles.
	}
	\label{fig:Ttorus}
\end{figure}

Similarly, we may write the Gauss map $G$ for an $\cR$ surface as
\[
	G^3 = [\rho \sn(4 K z;\tau)]^2
\]
where $\rho$, again, is the L\'opez--Ros factor.  This function is well defined
on the torus $\C/\langle 1/2, \tau\rangle$, halving the defining torus of
$\sn(z;\tau)$.  $G^3$ has a double-order zero at $0$ and a double-order pole at
$\tau/2$, as expected.  The multi-valued function on $\C/\langle 1/2,
\tau\rangle$ is given by $G = [\rho \sn(4 K z;\tau)]^{2/3}$.  We take the
branch cuts of the cubic root along the segments $[0,\tau/2]$, $[\tau/2,\tau]$,
$[1/2,(\tau+1)/2]$ and $[(\tau+1)/2, \tau+1/2]$, compatible
with~\cite{weyhaupt2006}; see Figure~\ref{fig:Rtorus}.  This time, both branch
points are hyperelliptic points; cf.~\cite[Proposition~3.12]{weyhaupt2006}.  We
recognize two other inversion centers at $1/4$ and $1/4+\tau/2$, which lift to
six more hyperelliptic points.

\begin{figure}
	\includegraphics[width=0.4\textwidth]{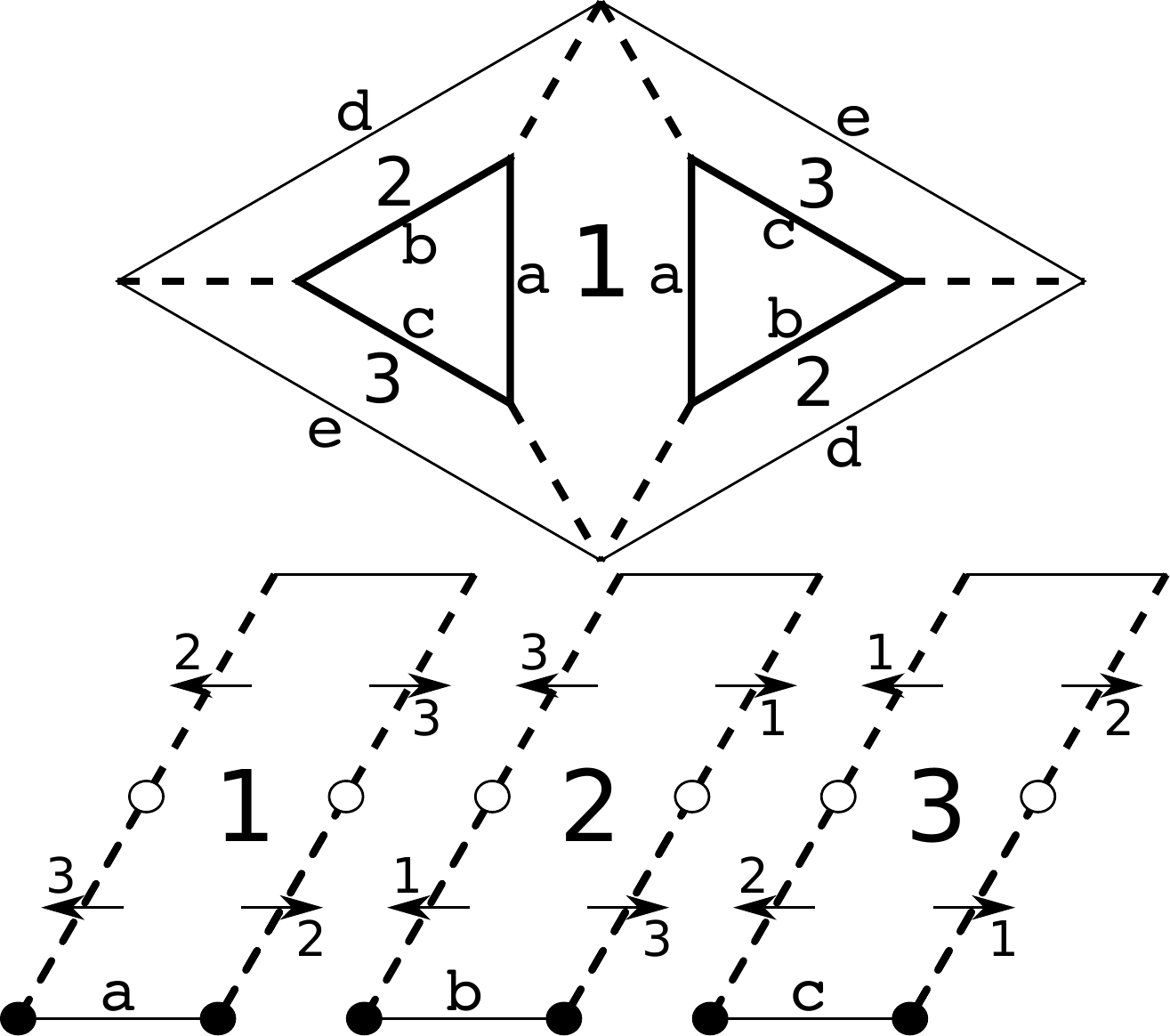}
	\caption{
		Top: Identifying segments with the same labels yields a surface of genus
		three.  The numbered regions are fundamental domains of the screw symmetry
		of order 2.  Bottom: Branch cuts in the branched torus for $\cR$ surfaces.
		Solid circles are zeros;  empty circles are poles.
	}
	\label{fig:Rtorus}
\end{figure}

\medskip

For surfaces in $\cT$ and $\cR$, the L\'opez--Ros factor $\rho$ can be
determined by the order-2 rotational symmetries that swap the poles and zeros
of the Gauss map.  More specifically, $G^2$ and $1/G^2$ for $\cT$ must have the
same residue at their respective poles, and $G^3$ and $1/G^3$ for $\cR$ must
have the same principal parts at their poles.  By the identity $\sn(z+iK';\tau)
= 1 / \sqrt{m} \sn(z;\tau)$, we deduce that $\rho = m^{1/4}$ for both $\cT$ and
$\cR$.

We have shown that
\begin{lemma}\label{lem:G}
	Up to a rotation in $\R^3$, the Gauss map $G$ have the form
	\begin{align*}
		G &= [m^{1/4} \sn(4Kz;\tau)]^{1/2} && \text{for a surface in $\cT$, and}\\
		G &= [m^{1/4} \sn(4Kz;\tau)]^{2/3} && \text{for a surface in $\cR$.}
	\end{align*}
\end{lemma}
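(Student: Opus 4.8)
The plan is to pin down the L\'opez--Ros factor $\rho$ by combining the divisor information already obtained with the one symmetry not yet used, namely the order-2 rotation about a horizontal axis. By Lemmas~\ref{lem:branchT} and~\ref{lem:branchR} the zeros and poles of $G^2$ (for a $\cT$ surface) sit exactly at $0,\tfrac12$ and at $\tfrac\tau2,\tfrac{\tau+1}2$, while the zero and pole of $G^3$ (for an $\cR$ surface) sit at $0$ and $\tfrac\tau2$, both of order two. An elliptic function is determined by its divisor up to a multiplicative constant, and $\sn(4Kz;\tau)$ has precisely these zeros and poles on $\C/\langle1,\tau\rangle$ (respectively $\sn(4Kz;\tau)^2$ on $\C/\langle\tfrac12,\tau\rangle$), so I would start by writing $G^2=\rho\,\sn(4Kz;\tau)$ and $G^3=\rho^2\sn(4Kz;\tau)^2$ for some $\rho\in\C^{\ast}$. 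Multiplying $G$ by a unimodular constant rotates the surface about the vertical axis (and multiplies $G^2$, $G^3$ by unimodular constants), so it remains only to show $|\rho|=|m|^{1/4}$, after which we simply choose the branch $\rho=m^{1/4}$.

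Second, I would bring in the order-2 rotational symmetry $R$ about a horizontal axis. As an orientation-preserving isometry interchanging the two bounding squares (resp.\ triangles), $R$ descends to a holomorphic involution $\phi$ of the branched torus that exchanges the zeros and poles of $G$; since $R$ reverses the vertical direction it sends $dh\mapsto-dh$, so by Lemma~\ref{lem:dh} $\phi$ must act on the base as $z\mapsto c_0-z$, and the requirement that it carry zeros to poles fixes $c_0\equiv\tfrac\tau2$ modulo the lattice. On the Gauss map $R$ acts by $G\mapsto c/G$ with $|c|=1$ (the value of $c$ encoding the direction of the axis), hence $(G\circ\phi)^k=c^k/G^k$ with $k=2$ for $\cT$ and $k=3$ for $\cR$.

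Third comes the explicit computation. Substituting $\phi(z)=\tfrac\tau2-z$, using $iK'=2\tau K$ together with the identity $\sn(z+iK';\tau)=1/\bigl(\sqrt m\,\sn(z;\tau)\bigr)$ and the oddness of $\sn$, one finds $\sn(4Kz;\tau)\,\sn\bigl(4K(\tfrac\tau2-z);\tau\bigr)=-1/\sqrt m$, a constant. Feeding this into $G^2\circ\phi=c^2/G^2$ gives $\rho^2=-c^2\sqrt m$, so $|\rho|^2=|m|^{1/2}$; equivalently, $G^2$ and $1/G^2$ have equal residues at their respective poles. For $\cR$ one squares the same identity to get $\sn\bigl(4K(\tfrac\tau2-z);\tau\bigr)^2=1/\bigl(m\,\sn(4Kz;\tau)^2\bigr)$, and $G^3\circ\phi=c^3/G^3$ then forces $\rho^4=c^3 m$, i.e.\ $|\rho|^4=|m|$; equivalently, $G^3$ and $1/G^3$ have equal principal parts at their double poles. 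In both cases $|\rho|=|m|^{1/4}$, and normalizing the rotation in $\R^3$ we take $\rho=m^{1/4}$, which is exactly the assertion of the lemma.

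The step I expect to carry the real weight is the second one: justifying cleanly that an orientation-preserving isometry of the minimal immersion descends to a holomorphic involution of the torus acting as $z\mapsto\tfrac\tau2-z$ on the base and as $G\mapsto c/G$ on the Gauss map, and --- crucially --- that this is the \emph{only} constraint, so that precisely the modulus of $\rho$ and not its argument is pinned down. Once that is secured the $\sn$ identities are routine, although one should keep in mind that $m=\lambda(2\tau)$ need not be real for the non-rectangular tori that are the whole point of the paper, so that $\sqrt m$ and $m^{1/4}$ must be read as fixed branches and the displayed equalities are equalities of moduli, valid up to the harmless argument of $\rho$.
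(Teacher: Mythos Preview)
Your proposal is correct and follows essentially the same route as the paper: write $G^2$ (resp.\ $G^3$) as $\rho$ times the appropriate power of $\sn(4Kz;\tau)$ by the divisor, then use the horizontal order-2 rotation together with the identity $\sn(z+iK';\tau)=1/(\sqrt m\,\sn(z;\tau))$ to pin down $|\rho|=|m|^{1/4}$. The paper compresses your second step into the single sentence ``$G^2$ and $1/G^2$ must have the same residue at their respective poles'' (and likewise principal parts for $G^3$), whereas you spell out explicitly that the rotation descends to $z\mapsto\tfrac\tau2-z$ and acts as $G\mapsto c/G$ with $|c|=1$; but this is the same argument, just unpacked.
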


\begin{remark}
	Equivalently, we can write the Gauss map for $\cT$ in the form
	\[
		G^2 = \rho' \frac{\theta(z)\theta(z-1/2)}{\theta(z-\tau/2)\theta(z-1/2+\tau/2)}
	\]
	where $\theta$ is the Jacobi Theta function (actually one of them) for the
	lattice $\C/\langle 1,\tau\rangle$, and the L\'opez--Ros factor $\rho' =
	e^{-i\pi(\tau-1)/2}$.  The Gauss map for $\cR$ may have a similar expression.
	This should help the readers to compare our computation with those
	in~\cite{weyhaupt2006, weyhaupt2008}.
\end{remark}

\begin{remark}
	With a change of basis for the torus, one could, of course, use other Jacobi
	elliptic functions to express the Gauss map.  We notice that the normalized
	Jacobi function $m^{1/4}\sn(z;\tau)$ is one of the three Jacobi-type elliptic
	functions constructed in~\cite[\S~3]{karcher1993}, where the symmetry is
	thoroughly investigated.
\end{remark}

\subsection{Conventions on the torus}\label{sec:convention}

Recall that the modular lambda function is invariant under the congruence
subgroup $\Gamma(2)$.  Hence $m=\lambda(2\tau)$ and $\sn(z;\tau)$ are invariant
under the congruence subgroup $\Gamma_0(4)$ generated by the transforms $\tau
\mapsto \tau+1$ and $\tau \mapsto \tau/(1-4\tau)$.  So the Weierstrass data is
invariant under $\Gamma_0(4)$, and we have infinitely many choices of $\tau$
for each surface in $\cT$ or $\cR$.  In fact, using Jacobi $\sn$ function
already limits the choice.

Surfaces in~\cite{weyhaupt2006, weyhaupt2008} all admit reflectional
symmetries, making it possible to choose $\tau$ to be pure imaginary, so the
torus is rectangular (and a different elliptic function should be used).  We
could not use this convention since, in general, the tG and rGL surfaces do not
admit reflectional symmetry, nor do surfaces in their associated families.

\medskip

The fundamental domain of $\Gamma(2)$ is usually taken as the region $C$
bounded by the vertical lines $\re \tau = \pm 1$ and the half circles $|\tau
\pm 1/2| = 1/2$.  It is then natural to choose $2\tau$ in this region, so
$\tau$ is in the region $C/2$ bounded by $\re \tau = \pm 1/2$ and $|\tau \pm
1/4| = 1/4$, a fundamental domain of $\Gamma_0(4)$.  Under this convention, our
$K'$ coincides with the standard \emph{associated} elliptic integral of the
first kind with modulus $m$; then we can safely employ the formula in most
textbooks.

This natural choice is, however, not convenient for analyzing the tG and rGL
families.  For instance, the tP and tD families correspond to the same vertical
line $\re\tau = 0$, and the gyroid also lies on this line.  In fact, we will
see that, for each $r \in (-1/2,0) \cup (0,1/2]$, there are two members of tG
with $\re\tau = r$ under the natural convention.  The same happens for rGL.
Hence we need a different convention.

\medskip

We will see that, for a surface in $\cT$ or $\cR$, the poles and zeros of the
Gauss map are aligned along vertical lines, alternatingly arranged and equally
spaced.  For the sake of a uniform treatment, we make the following convention:

\begin{convention}
	For surfaces in $\cT$ and $\cR$, we assume that the Weierstrass
	parameterization maps $(1+\tau)/2$ directly above $0$.
\end{convention}

Our convention should be seen as a marking on the surface.  Although the
Weierstrass data is invariant under the transform $\tau \mapsto \tau+1$, the
marked branch point $(1+\tau)/2$ is however different.  The marking then serves
to distinguish, for instance, Schwarz' P, D surfaces and the gyroid.  We will
see that, under our convention, $\re \tau = 0$ corresponds to the gyroid in
$\cT$ and the Lidinoid in~$\cR$.

We list in Table~\ref{tbl:weierstrass} the Weierstrass data of classical TPMSg3
under our convention.  For each family, we specify the possible $\tau$ for the
torus and the associate angle $\theta$.  We also accompany a diagram, showing
the possible $\tau$ (dashed curve), the fundamental parallelogram for a typical
example, the poles (empty circles) and the zeros (solid circles) of
$\sn(z;\tau)$ (whose defining torus could twice the shown quotient torus!), and
an arrow indicating $dh$ by pointing to the direction of increasing height.
The bottom-left corner of the parallelogram is always 0, the bottom edge
represents 1 for $\cT$ or 1/2 for $\cR$, and the left edge always represents
$\tau$.  The tori used in~\cite{weyhaupt2006, weyhaupt2008} are shown as dotted
rectangles for reference.

\begin{table}[hbt]
	\begin{tabular}{r l l l}
		tP: & $\re \tau = -1$, & $\theta = \pi/2$, &
		\tikz[baseline=(current bounding box.center),scale = 2]{
			\fill[fill=lightgray] (0,0)--(-1,0.8)--(0,0.8)--(1,0)--cycle;
			\draw[dotted] (0,0)--(0,0.8)--(1,0.8)--(1,0)--cycle;
			\filldraw (0,0) circle [radius = .05];
			\filldraw (0.5,0) circle [radius = .05];
			\filldraw[fill=white] (-0.5,0.4) circle [radius = .05];
			\filldraw[fill=white] (0,0.4) circle [radius = .05];
			\draw[very thick,->] (0, 0)--(0,0.3);
			\draw[dashed] (-1,0)--(-1,1);
		}\\[1cm]

		tD: & $\re \tau = 1$, & $\theta = 0$, &
		\tikz[baseline=(current bounding box.center),scale = 2]{
			\fill[fill=lightgray] (0,0)--(1,0.8)--(2,0.8)--(1,0)--cycle;
			\draw[dotted] (0,0)--(0,0.8)--(1,0.8)--(1,0)--cycle;
			\filldraw (0,0) circle [radius = .05];
			\filldraw (0.5,0) circle [radius = .05];
			\filldraw[fill=white] (0.5,0.4) circle [radius = .05];
			\filldraw[fill=white] (1,0.4) circle [radius = .05];
			\draw[very thick,->] (0,0)--(0.3,0);
			\draw[dashed] (1,0)--(1,1);
		}\\[1cm]

		CLP: & $|\tau + 1/2| = 1/2$, & $\theta = \arg\tau - \pi/2$, &
		\tikz[baseline=(current bounding box.center),scale = 2]{
			\fill[fill=lightgray] (0,0)--(-0.36,0.48)--(0.64,0.48)--(1,0)--cycle;
			\draw[dotted] (0,0)--(-0.36,0.48)--(0.28,0.96)--(0.64,0.48)--cycle;
			\filldraw (0,0) circle [radius = .05];
			\filldraw (0.5,0) circle [radius = .05];
			\filldraw[fill=white] (-0.18,0.24) circle [radius = .05];
			\filldraw[fill=white] (0.32,0.24) circle [radius = .05];
			\draw[very thick,->] (0,0)--(0.24,0.18);
			\draw[dashed] (0,0) arc (0:180:0.5);
		}\\[1cm]

		H: & $\re \tau = -1$, & $\theta = \pi/2$, &
		\tikz[baseline=(current bounding box.center),scale = 2]{
			\fill[fill=lightgray] (0,0)--(-1,0.8)--(-0.5,0.8)--(0.5,0)--cycle;
			\draw[dotted] (0,0)--(0,0.8)--(0.5,0.8)--(0.5,0)--cycle;
			\filldraw (0,0) circle [radius = .05];
			\filldraw (0.5,0) circle [radius = .05];
			\filldraw[fill=white] (-0.5,0.4) circle [radius = .05];
			\filldraw[fill=white] (0,0.4) circle [radius = .05];
			\draw[very thick,->] (0,0)--(0,0.3);
			\draw[dashed] (-1,0)--(-1,1);
		}\\[1cm]

		rPD: & $\re \tau = 1/2$, & $\theta = \pi/2$ or $0$, &
		\tikz[baseline=(current bounding box.center),scale = 2]{
			\fill[fill=lightgray] (0,0)--(0.5,0.8)--(1.0,0.8)--(0.5,0)--cycle;
			\draw[dotted] (0,0)--(0,0.8)--(0.5,0.8)--(0.5,0)--cycle;
			\filldraw (0,0) circle [radius = .05];
			\filldraw (0.5,0) circle [radius = .05];
			\filldraw[fill=white] (0.25,0.4) circle [radius = .05];
			\filldraw[fill=white] (0.75,0.4) circle [radius = .05];
			\draw[very thick,->] (0,0)--(0.3,0);
			\draw[dashed] (0.5,0)--(0.5,1);
		}\\[1cm]
	\end{tabular}
	\caption{Weierstrass data for classical TPMSg3.\label{tbl:weierstrass}}
\end{table}

\section{Period conditions}\label{sec:period}

Rectangular tori are convenient in many ways.  For example, many straight
segments in the branched torus correspond to geodesics on the surface, making
it possible to compute explicitly~\cite{weyhaupt2006}.  With general tori, we
lose all the nice properties.  An explicit computation is indeed hopeless, but
we are still able to say something.

\subsection{Twisted catenoids}

Assume $dh = e^{-i\pi/2} = -idz$ for the moment.  We now study the image under
the maps
\begin{align*}
	\Phi_i: \C &\to \C \qquad i=1,2\\
\Phi_1: z &\mapsto \int^z dh\cdot G\\
\Phi_2: z &\mapsto \int^z dh / G.
\end{align*}

Let us first look at the lower half of the branched torus of $\cT$, i.e.\ the
part with $0 < \im z < \im \tau/2$.  This is topologically an annulus and lifts
to its universal cover $\{ z | 0 < \im z < \im \tau/2 \}$, which is a strip in
$\C$.  By analytic continuation, $G$ lifts to a function of period $2$ on the
strip.  The boundary lines $\im z = 0^+$ and $\im z = \im \tau/2-0^+$ are then
mapped by $\Phi_1$ into periodic or closed curves.

By the same argument as in the standard proof of the Schwarz--Christoffel
formula (see also \cite{weyhaupt2006, fujimori2009}), we see that the curves
make an angle at each branch point.  The interior angle is $\pi/2$ at the poles
of $G^2$ and $3\pi/2$ at the zeros.  By the symmetry
$\sn(2K+z;\tau)=-\sn(z;\tau)$, the image of the segments $[n/2,(n+1)/2]$, $n
\in \Z$, are all congruent, and images of adjacent segments only differ by a
rotation of $\pi/2$ around their common vertex.  Moreover, by the symmetry
$\sn(2K-z;\tau)=\sn(z;\tau)$, the image of each segment admits an inversional
symmetry.  The inversion center is the image of the hyperelliptic points, at
the midpoint of each segment.  The same can be said about the segments
$[(n+\tau)/2, (n+1+\tau)/2]$, $n \in \Z$.

Therefore, the boundaries of the strip are mapped to closed curves with a
rotational symmetry of order $4$.  The curves look like ``curved squares'',
obtained by replacing the straight edges of the square by congruent copies of a
symmetric curve.  Moreover, the two twisted squares share the same rotation
center.  The strip is then mapped by $\Phi_1$ to the twisted annulus bounded
by the curved squares.  See Figure~\ref{fig:Tannulus}.

\medskip

A similar analysis can be carried out on the upper annulus $\im \tau/2 < \im z
< \im \tau$.  However, the branch cuts ensure that the two annuli continuous
into different branches (different signs of square root) when crossing the
segments $[n/2, n/2+\tau]$.  As a consequence, the boundary curves is turning
in the opposite direction.  As one passes through the segment
$[\tau/2, (\tau+1)/2]$, the images of $\Phi_1$ is extended by an inversion,
which reverses orientation.

Because of our choice of the L\'opez--Ros parameter, the image of the lower
annulus under $\Phi_2$ is congruent to the image of the upper annulus under
$\Phi_1$.  The only difference is that the inner and the outer boundaries of
the annulus are swapped.  See Figure~\ref{fig:Tannulus}.

\begin{figure}
	\includegraphics[width=0.6\textwidth]{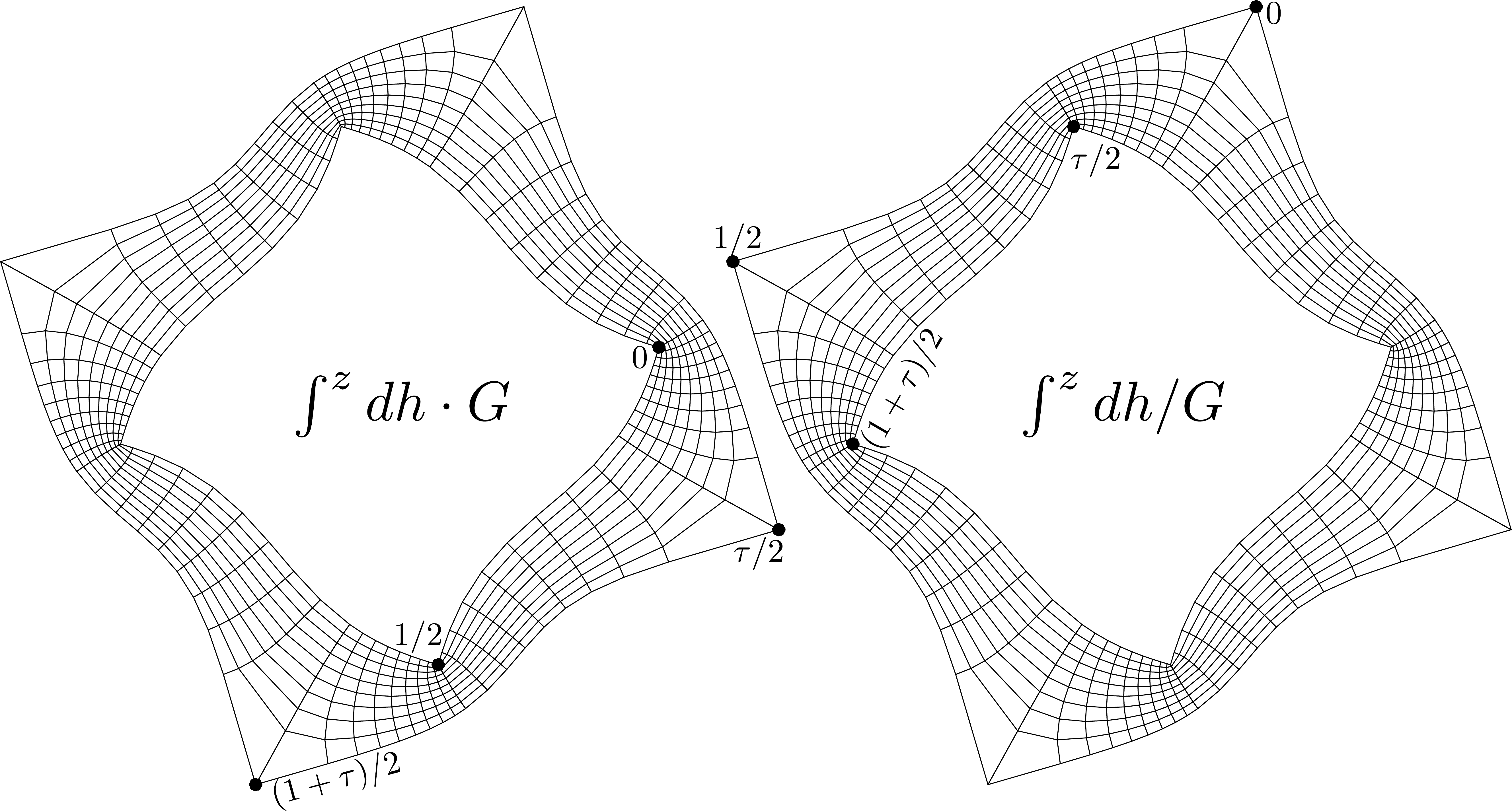}
	\caption{
		Plot of the lower annulus $0 < \im z < \im \tau/2$ under the maps $\Phi_1$
		and $\Phi_2$ for $\cT$, with $\tau = 0.3+0.2 i$.
	}
	\label{fig:Tannulus}
\end{figure}

\medskip

Combining the flat structures $\Phi_1$ and $\Phi_2$ gives us the image under
the Weierstrass parameterization.  Because of our choice of $dh$, we know that
the lines $\im z = 0$ and $\im z = \im \tau/2$ are mapped to two horizontal
planar curves, at heights $0$ and $\im \tau/2$ respectively.  The previous
analysis tells us that these are two congruent closed curves that look like
curved squares.  In particular, they admit rotational symmetry of order $4$.
The strip $0 < \im z < \im \tau/2$ is then mapped to a ``twisted square
catenoid'' bounded by these curves.  Moreover, the twisted catenoid admits
rotational symmetries of order 2 around horizontal axes that swap its boundaries.
See Figure~\ref{fig:Tcatenoid}.

\begin{figure}
	\includegraphics[width=0.25\textwidth,valign=t]{Figure/TCatenoid-10.pdf}
	\includegraphics[width=0.25\textwidth,valign=t]{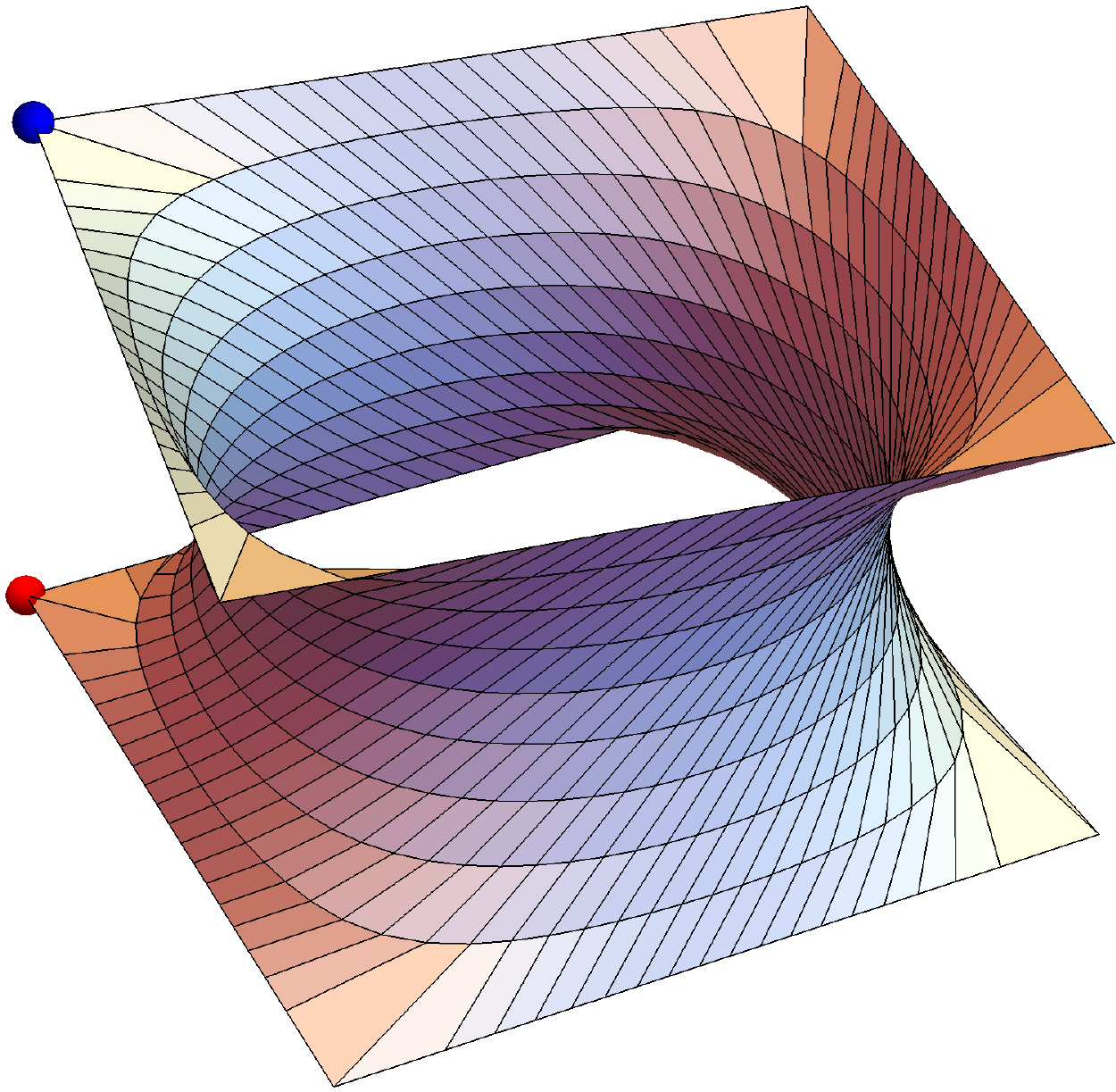}
	\includegraphics[width=0.25\textwidth,valign=t]{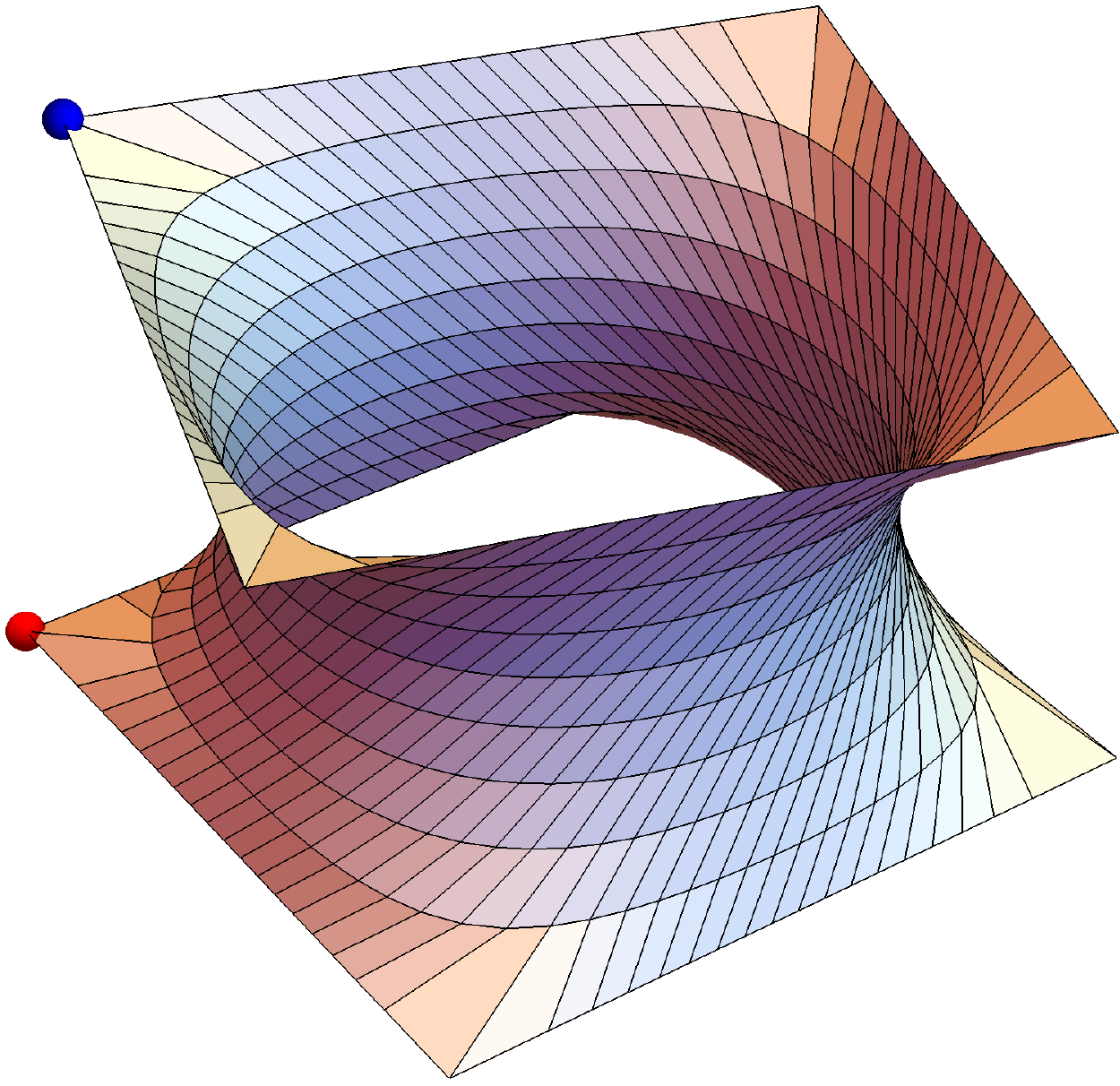}

	\includegraphics[width=0.25\textwidth,valign=t]{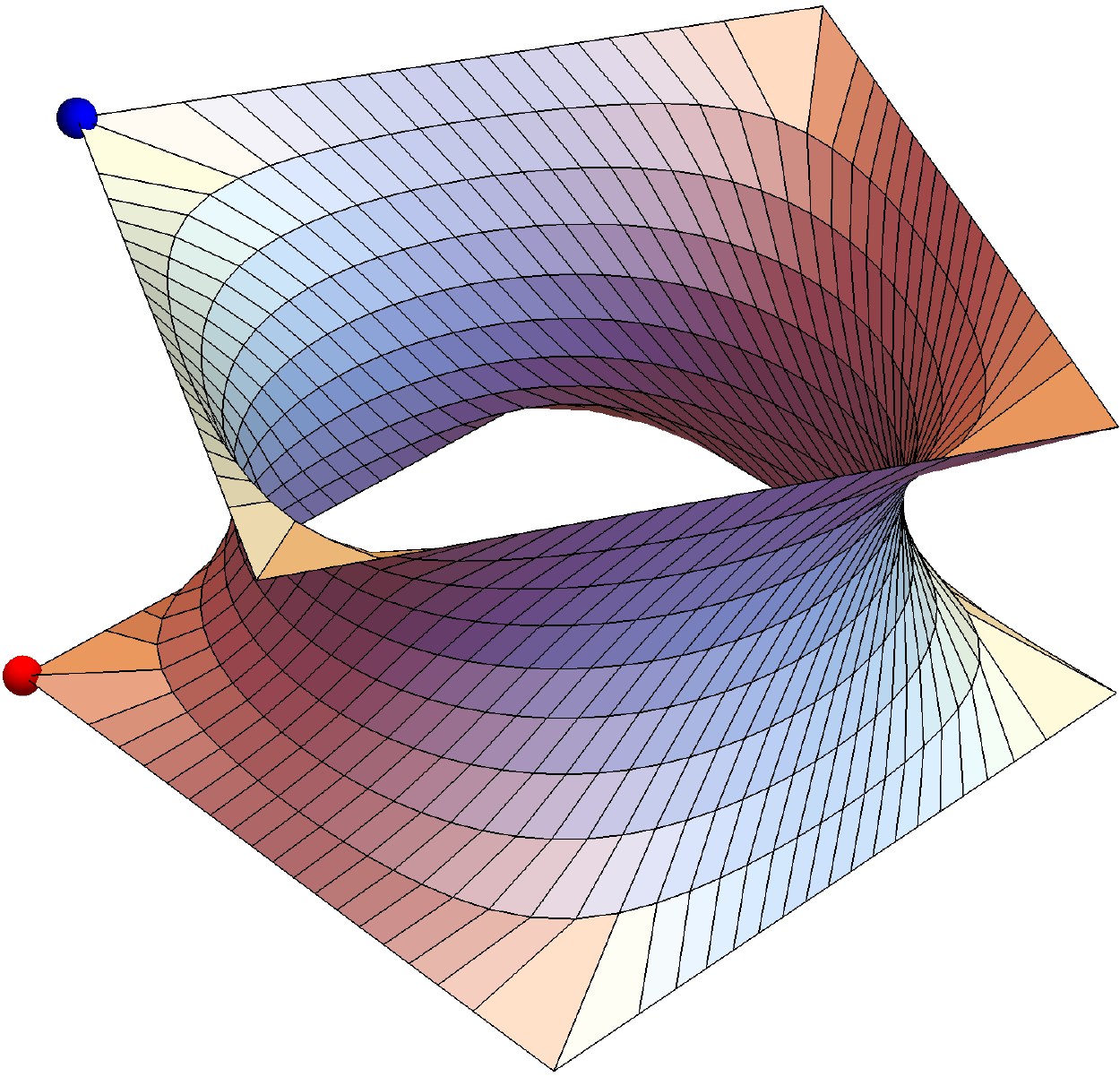}
	\includegraphics[width=0.25\textwidth,valign=t]{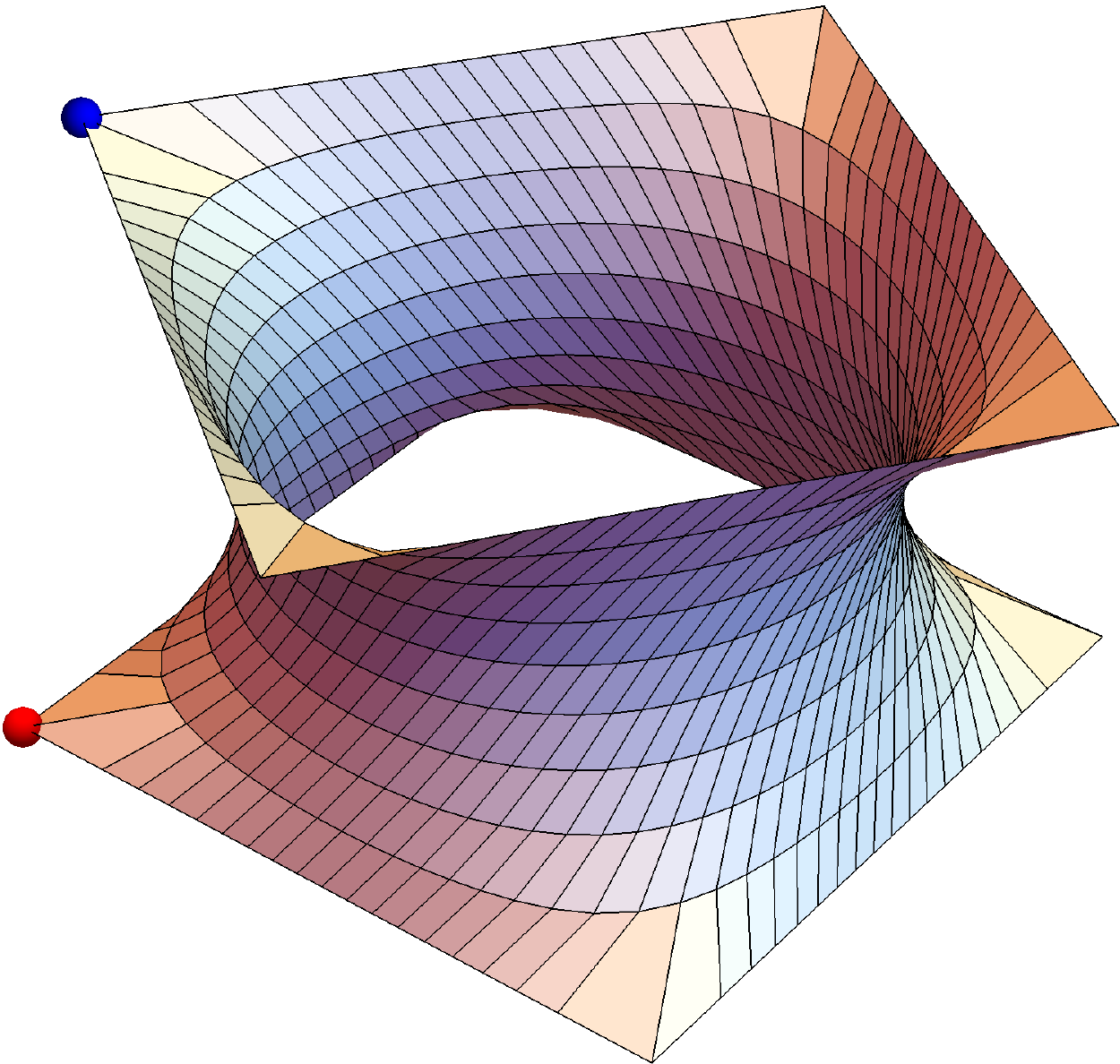}
	\includegraphics[width=0.25\textwidth,valign=t]{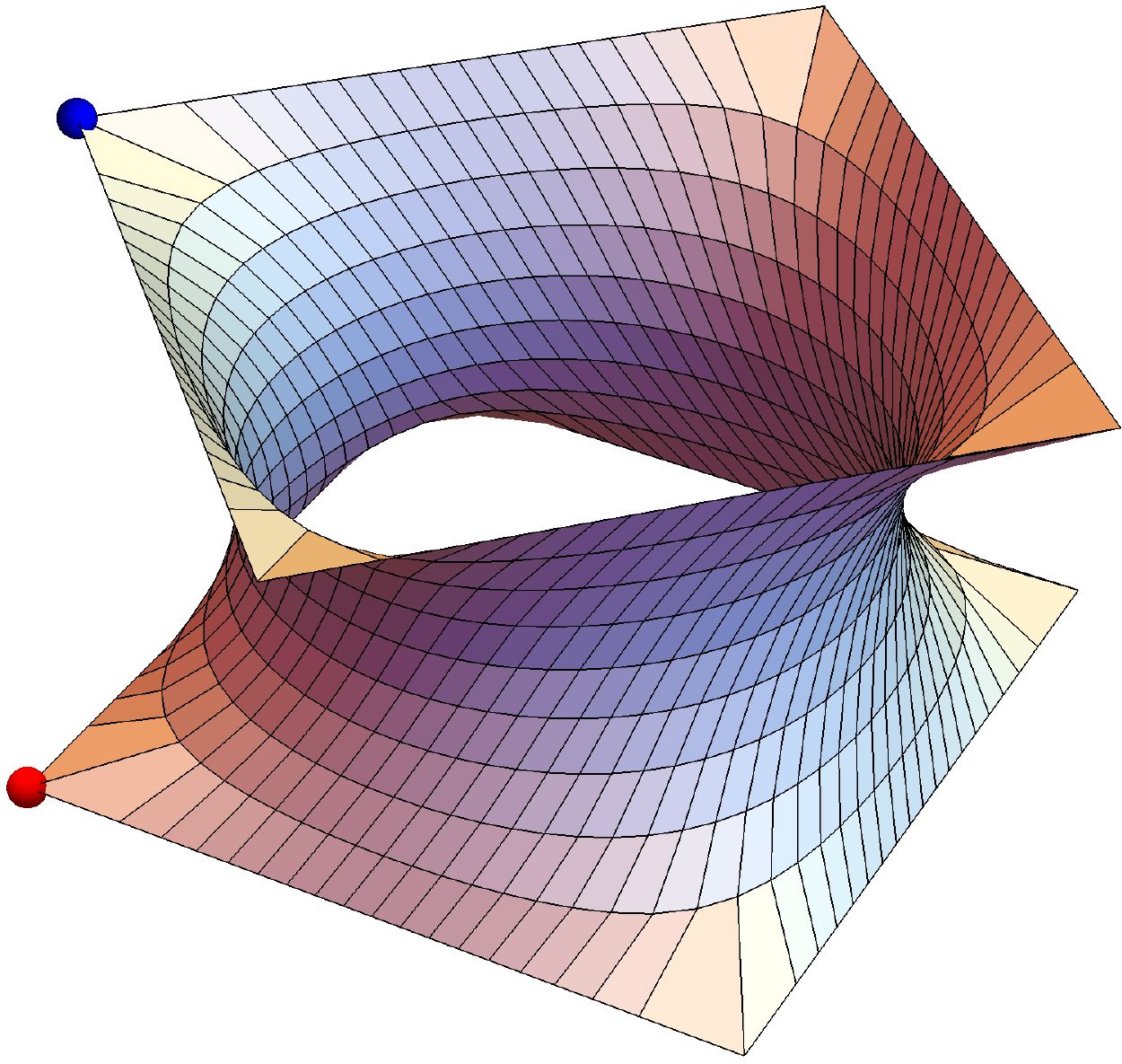}
	\caption{
		Plot of the annulus with $0 < \im z < \im \tau/2$ under the Weierstrass
		parameterization of $\cT$, for $\im\tau = 1$ and $\re\tau = -1, -0.9, -0.8,
		-0.7, -0.6, -0.5$, in this order.  The red and the blue points indicate the
		images of $0$ and $(1+\tau)/2$.  An increasing ``twist angle'' is visible.
		Note that bounding edges are in fact slightly curved, except for $\re\tau =
		-1$ and $-0.5$.
	}
	\label{fig:Tcatenoid}
\end{figure}

Inversion in the midpoint of a curved edge extends the surface with another
twisted square catenoid, which is the image of the strip $\im \tau/2 < \im z <
\im \tau$.  Repeated inversions in the midpoints of the curved edges extend the
catenoid infinitely into the space $\R^3$, but the result is usually not
embedded.

\medskip

The same argument applies to the Weierstrass data of $\cR$.  The lower annulus
lift to a strip in $\C$ of period $3/2$.  We then obtain a twisted triangular
catenoid with rotational symmetry of order~$3$, and repeated inversions in the
midpoints of the curved edges extend the catenoid infinitely into $\R^3$, but
usually not embedded.  See Figures~\ref{fig:Rannulus} and~\ref{fig:Rcatenoid}.

\medskip

\begin{figure}
	\includegraphics[width=0.6\textwidth]{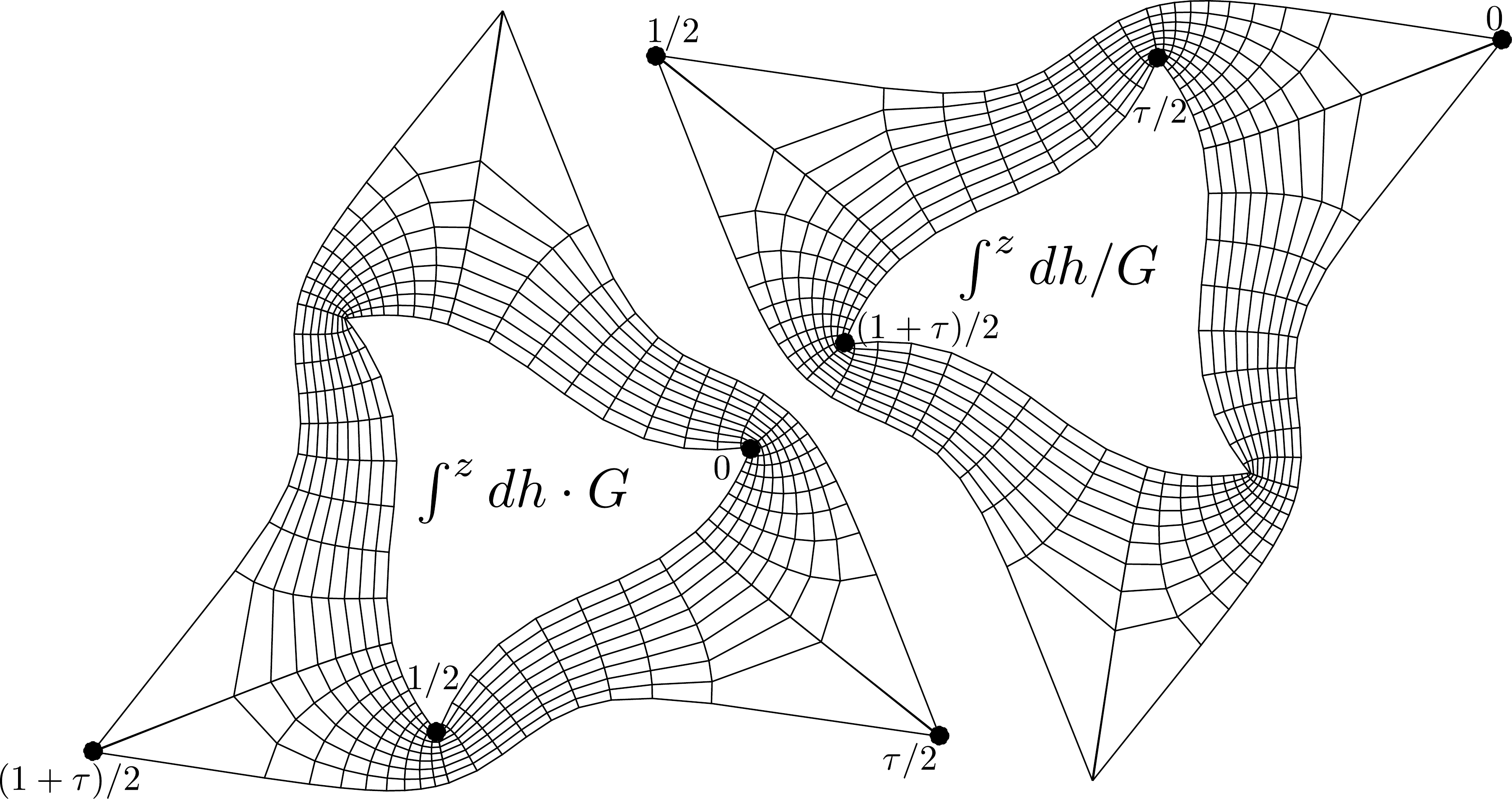}
	\caption{
		Plot of the annulus $0 < \im z < \im \tau/2$ under the maps $\Phi_1$ and
		$\Phi_2$ for $\cR$, with $\tau = 0.3+0.2 i$.
	}
	\label{fig:Rannulus}
\end{figure}

\begin{figure}
	\includegraphics[width=0.25\textwidth,valign=t]{Figure/RCatenoid-10.pdf}
	\includegraphics[width=0.25\textwidth,valign=t]{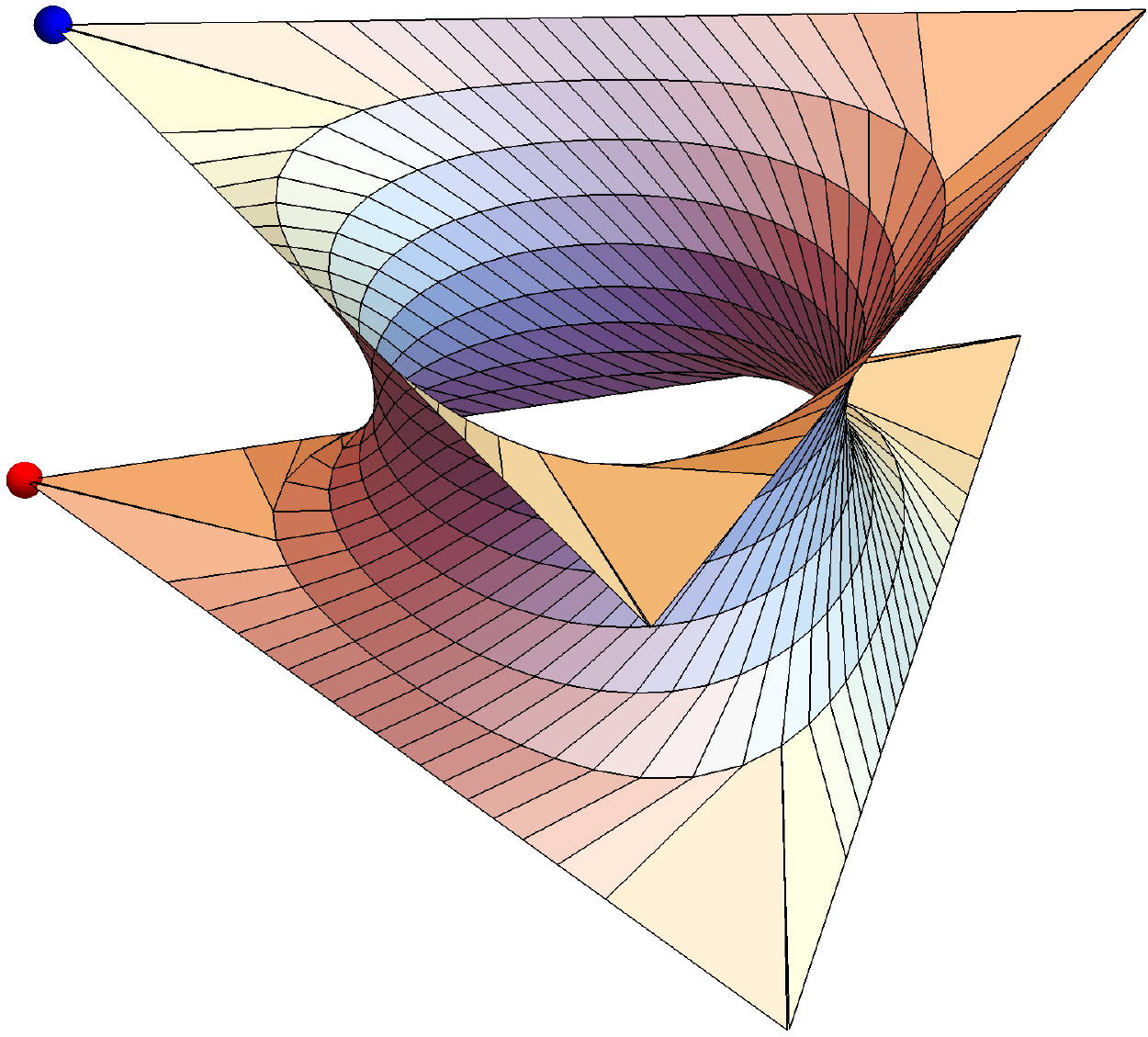}
	\includegraphics[width=0.25\textwidth,valign=t]{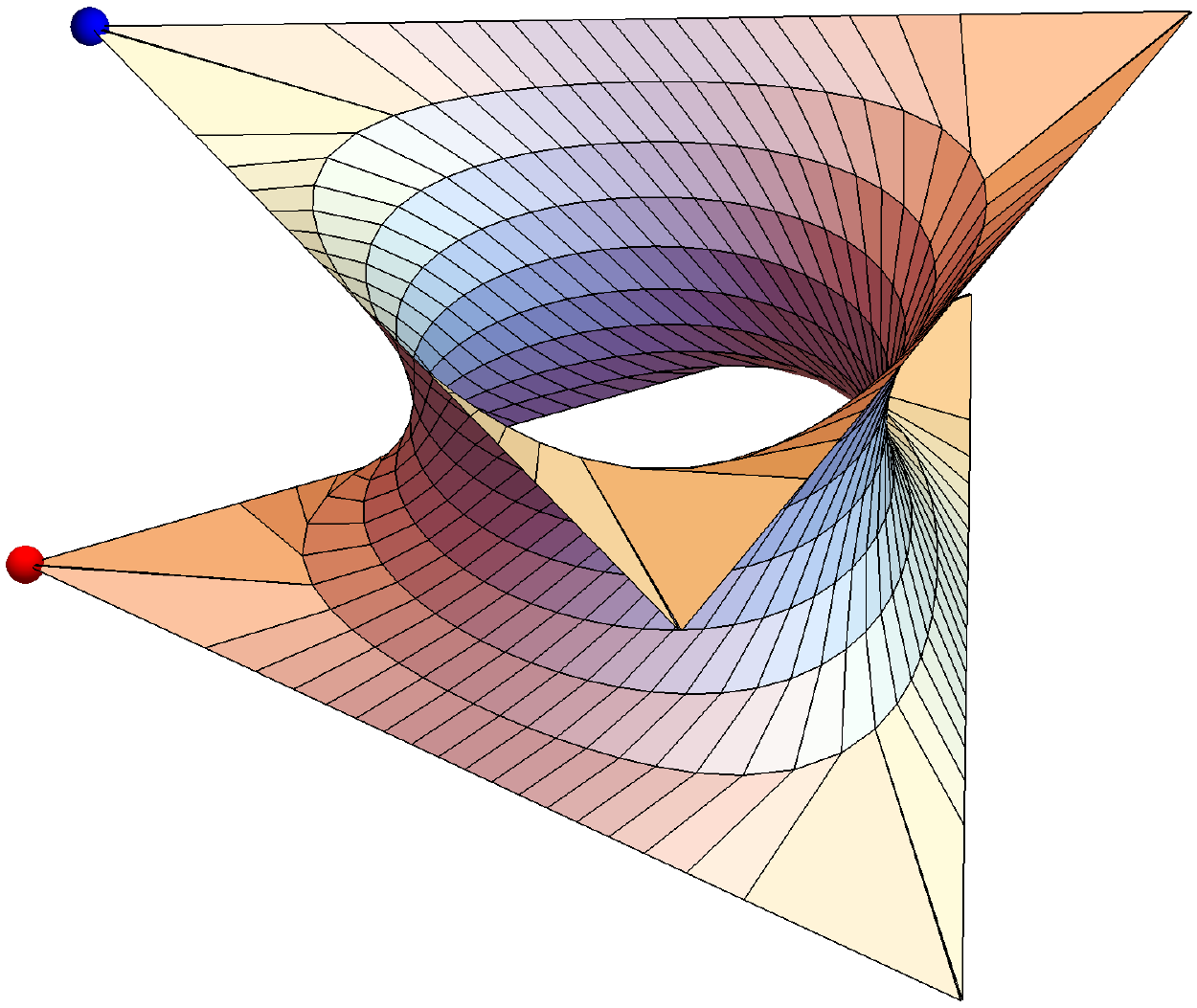}

	\includegraphics[width=0.25\textwidth,valign=t]{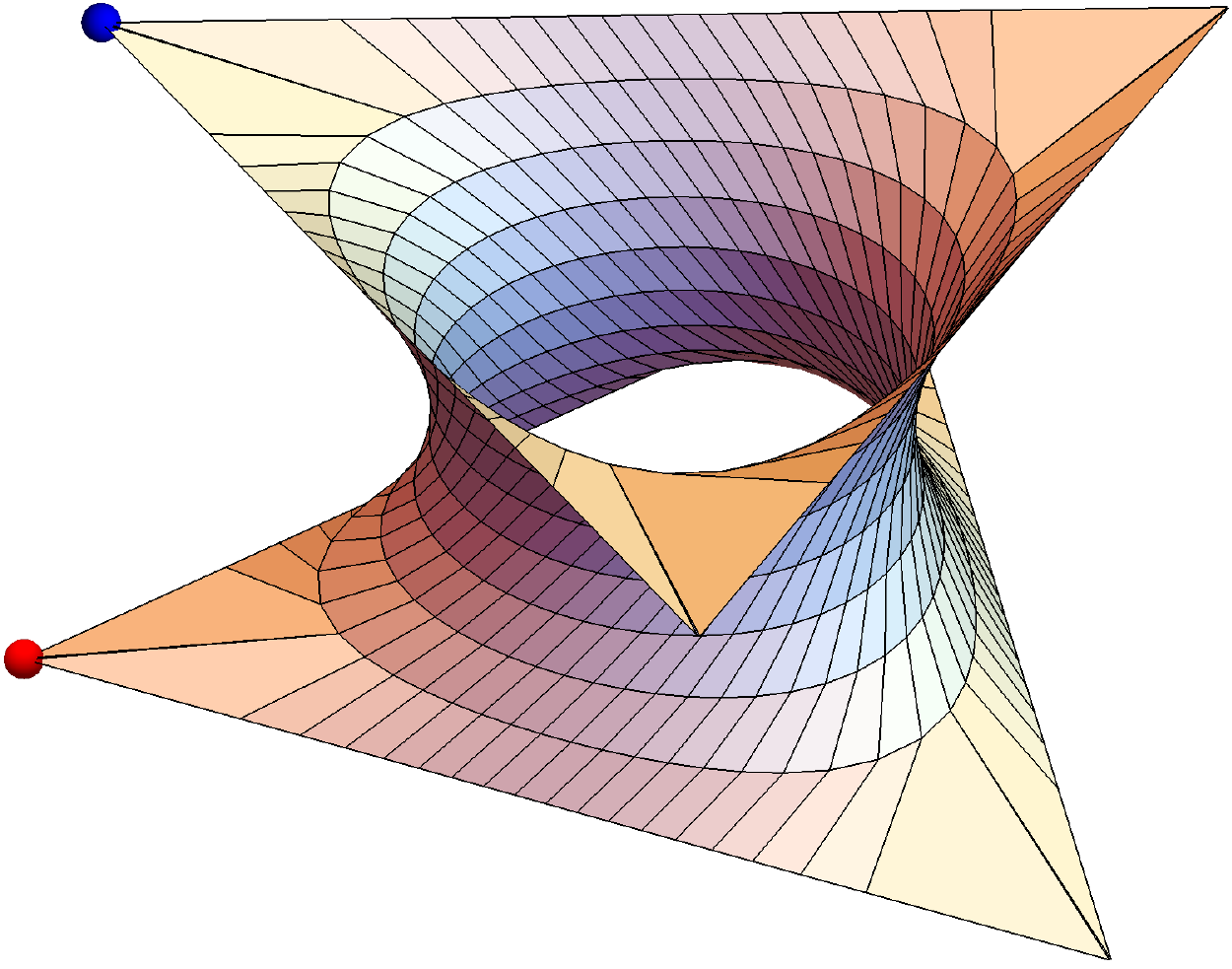}
	\includegraphics[width=0.25\textwidth,valign=t]{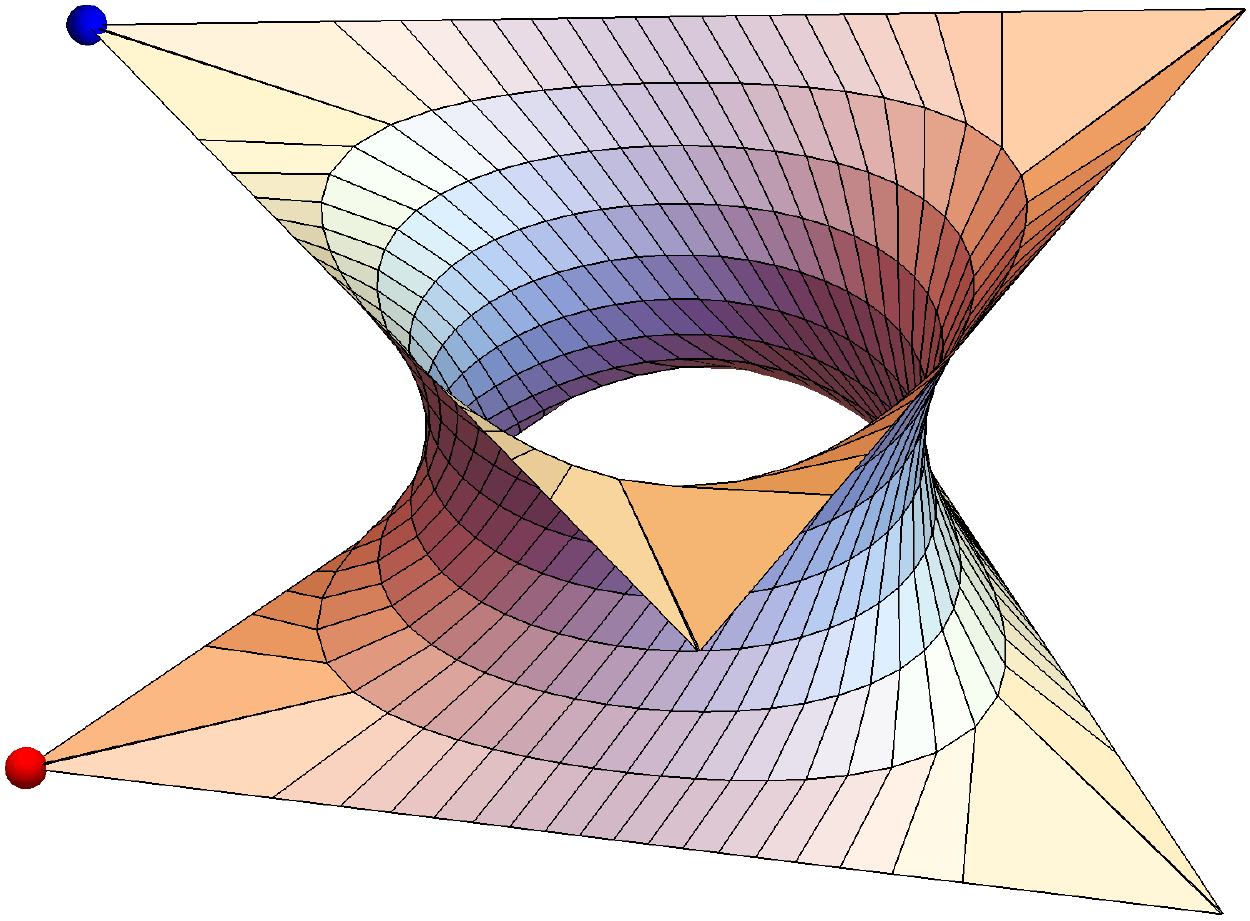}
	\includegraphics[width=0.25\textwidth,valign=t]{Figure/RCatenoid-05.pdf}
	\caption{
		Plot of the annulus with $0 < \im z < \im \tau/2$ under the Weierstrass
		parameterization of $\cR$, for $\im\tau = 1$ and $\re\tau = -1, -0.9, -0.8,
		-0.7, -0.6, -0.5$, in this order.  The red and the blue points indicate the
		images of $0$ and $(1+\tau)/2$.  An increasing ``twist angle'' is then
		visible.  Note that bounding edges are in fact slightly curved, except for
		$\re\tau = -1$ and $-0.5$.
	}
	\label{fig:Rcatenoid}
\end{figure}

It is interesting to observe the twisted catenoids when $\re\tau$ increases.

Let us start from a tP surface with $\re\tau = -1$.  Its catenoid is the
standard square catenoid, not twisted.  As we increase $\re\tau$, the square
catenoid becomes ``twisted'' in two senses: on the one hand, the bounding
squares become curved; on the other hand, horizontal projections of the squares
form an angle.  This ``twist angle'' seems to increase monotonically with
$\re\tau$ (we are not sure!); see Figure~\ref{fig:Tcatenoid}.  Remarkably, when
$\re\tau = -1/2$, reflectional symmetry is restored in the Weierstrass data,
and the catenoid is bounded by two straight squares forming a twist angle
of~$\pi/4$.

Then the catenoid is again ``twisted'' as we continue to increase $\re\tau$
until $1$.  During the process, the twist angle increases from $0$ at $\re\tau
= -1$ (tP), to $\pi/4$ at $\re\tau = -1/2$, to $\pi/2$ at $\re\tau = 0$
(gyroid), to $3\pi/4$ at $\re\tau = 1/2$, until $\pi$ at $\re\tau = 1$ (tD).
These are the only cases where reflectional symmetry is restored, and the
bounding edges are straight.  The term ``twist angle'' is in general
ill-defined\footnotemark, but carries a natural meaning in these cases.  Note
that, although the transform $\tau \mapsto \tau+1$ leaves the catenoid
invariant, the image of $(\tau+1)/2$ (blue point in Figure~\ref{fig:Tcatenoid})
is however rotated by $\pi/2$.  So the marked twisted catenoid is invariant
under the transform $\tau \mapsto \tau+4$.

\footnotetext{
	More precisely, I can think of several ways to define the ``twist angle''.
	They seem inconsistent, and it is not clear which is more beneficial.  Hence
	I prefer not to make the definition at the moment.
}

\begin{remark}
	The tG surfaces at $\re\tau = \pm 1/2$ deserve more attention, as the
	reflectional symmetries in their Gauss maps seem special.
\end{remark}

Similarly, for $\cR$, we can start from the triangular catenoid of H with
$\re\tau = -1$, and increase $\re\tau$ until $1/2$.  The bounding triangles
become curved and form a twisted angle that seems to increase monotonically
with $\re\tau$; see Figure~\ref{fig:Rcatenoid}.  In particular, the ``twist
angle'' increases from $0$ at $\re\tau = -1$ (H), to $\pi/3$ at $\re\tau =
-1/2$ (gyroid), to $2\pi/3$ at $\re\tau = 0$ (Lidinoid), until $\pi$ at
$\re\tau = 1/2$ (rPD).  These are the only cases where the reflectional
symmetry is restored, the bounding edges are straight, and the meaning of the
``twist angle'' is clear.  The marked twisted catenoid is invariant under the
transform $\tau \mapsto \tau+3$.

\subsection{Vertical and horizontal associate angles}

The Weierstrass parameterization defines an immersion only if the period
problems are solved.  That is, the integrations around closed curves on
$\Sigma$ should all vanish (up to $\Lambda$).  The period problems for Schwarz'
surfaces were explicitly solved in~\cite{weyhaupt2006}.

Gro\ss e--Brauckmann and Wohlgemuth~\cite{kgb1996} proposed a convenient way to
visualize the gyroid, which we now generalize to all surfaces in $\cT$ and
$\cR$.  This will reduce the period problems to just one equation. 

\medskip

We have demonstrated a twisted catenoid in the associate family of every
surface in $\cT$ or $\cR$.  As one travels from the twisted catenoids along the
associate family, the bounding twisted squares or triangles become ``twisted
helices'', and the twisted catenoid opens up into a ribbon bounded by these
helices.

For an embedded TPMSg3 $M/\Lambda$, Meeks~\cite{meeks1990} proved that its
hyperelliptic points can be identified to the lattice and half-lattice points
of $\Lambda$.  Then the symmetries of $\cT$ and $\cR$ imply that the poles
and zeros of the Gauss map are
\begin{itemize}
	\item aligned along vertical lines arranged in a square or triangular lattice and
	\item alternatingly arranged and equally spaced on each vertical line.
\end{itemize}
Conversely, if these are the cases, then the two ribbons forming the
fundamental unit ``fit exactly into each other''.  That is, their boundaries
are identified in the same pattern as the twisted catenoids (see
Figures~\ref{fig:Ttorus} and~\ref{fig:Rtorus}).  This is guaranteed by the
screw and the inversional symmetries.  So the properties listed above form a
sufficient condition for the immersion.

\medskip

To be more precise, we define the pitch of a helix to be the increase of height
after the helix makes a full turn.  Then the poles and zeros are alternatingly
arranged and equally spaced on each vertical line if the pitch of each helix is
an even multiple of the minimum vertical distance between the poles and the
zeros.  In particular, for the gyroid, the pitch doubles the minimum vertical
distance.  So we expect the same property for tG and rGL surfaces.  

\begin{remark}
	The ratio of the pitch over the minimum vertical distance can also take other
	values.  They will be discussed in Section~\ref{sec:discuss}.
\end{remark}

For tG surfaces, this means that the integral of the height differential $dh$
from $0$ to $2$ doubles the integral from $0$ to $(1+\tau)/2$.  Or
equivalently, the integral from $0$ to $1$ equals the integral from $0$ to
$(1+\tau)/2$.  This can be easily achieved by adjusting the associate angle
$\theta$ to (compare~\cite[Definition~4.2]{weyhaupt2008})
\[
	\theta = \theta_v(\tau) = \arg(\tau-1) - \pi/2 \quad \text{for tG.}
\]
Similarly, for rGL surfaces, this means that the integral of $dh$ from $0$ to
$3/4$ equals the integral from $0$ to $(1+\tau)/2$.  Then we deduce that
\[
	\theta = \theta_v(\tau) = \arg(\tau-1/2) - \pi/2 \quad \text{for rGL.}
\]

\medskip

We now calculate the associate angle in another way, using the fact that the
images of $0$ and $(1+\tau)/2$ are vertically aligned, i.e.\ have the same
horizontal coordinates.

First note that, by the symmetry $\sn(2K+iK'-z;\tau) = 1/\sn(z;\tau)$, we
have the identity
\[
	\int_0^{(1+\tau)/2} dz/G = \int_0^{(1+\tau)/2} dz \cdot G =: \psi(\tau)
\]
We may place the image of $0$ at the origin.  First look at the surface with
$\theta=0$, hence $dh = dz$.  Then the horizontal coordinates of the image of
$(1+\tau)/2$ are
\[
	\re \int_0^{(1+\tau)/2} \Big(\frac{1}{2}\big(\frac{1}{G}-G\big), \frac{i}{2}\big(\frac{1}{G}+G\big)\Big)dz = (0, -\im \psi(\tau)).
\]
Then we look at the surface with $\theta=\pi/2$, hence $dh = e^{-i\pi/2} dz =
-i dz$ (the conjugate surface).  Then the coordinates are
\[
	\re \int_0^{(1+\tau)/2} \Big(-\frac{i}{2}\big(\frac{1}{G}-G\big), \frac{1}{2}\big(\frac{1}{G}+G\big)\Big)dz = (0, \re \psi(\tau)).
\]
So for the surface with associate angle $\theta$, the first coordinate is
always $0$, while the second coordinate
\[
	- \cos\theta \im\psi(\tau) + \sin\theta \re\psi(\tau)
\]
vanishes when
\[
	\theta = \theta_h(\tau) := \arg\psi(\tau).
\]

\medskip

We have shown that

\begin{lemma}
	The Weierstrass data given by Lemmas~\ref{lem:dh} and~\ref{lem:G} define an
	immersion if and only if
	\[
		\theta_h(\tau)=\theta_v(\tau),
	\]
	or more explicitly,
	\begin{equation}\label{eq:period}
		\arg\psi(\tau) =
		\begin{cases}
			\arg(\tau-1)-\pi/2 & \text{for $\cT$;}\\
			\arg(\tau-1/2)-\pi/2 & \text{for $\cR$.}
		\end{cases}
	\end{equation}
\end{lemma}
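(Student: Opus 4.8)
The plan is to show that the period problem is equivalent to a ``matching'' condition between two natural expressions for the associate angle $\theta$, namely $\theta_v$ (coming from the ribbon-pitch condition) and $\theta_h$ (coming from the vertical alignment of the images of the two marked branch points). The strategy is to prove that these two conditions together are \emph{necessary and sufficient} for the Weierstrass data of Lemmas~\ref{lem:dh} and~\ref{lem:G} to define an immersion, and then to read off the explicit form of each condition.

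First I would establish sufficiency. Given the Weierstrass data $(\Sigma, G, dh)$ on the branched torus, the period problem asks that $\re\int_\gamma (\omega_1,\omega_2,\omega_3)$ lie in $\Lambda$ for every closed loop $\gamma$ on $\Sigma$. I would argue that, because of the screw symmetry and the inversional symmetries built into the construction, it suffices to close the periods along a small number of generating cycles — indeed, the twisted catenoid picture shows that the fundamental unit consists of two ribbons, and the surface closes up precisely when adjacent ribbons ``fit exactly into each other''. As noted in the excerpt, this happens exactly when the poles and zeros of the Gauss map are aligned along vertical lines forming a square (resp.\ triangular) lattice and are alternatingly arranged and equally spaced on each such line. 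The first of these geometric conditions — that the helices have pitch an even multiple of the minimum vertical separation of poles and zeros — translates, via integration of $dh = e^{-i\theta}dz$, into $\theta = \theta_v(\tau)$; for $\cT$ one computes $\int_0^1 dh = \int_0^{(1+\tau)/2} dh$, i.e.\ $\re\big(e^{-i\theta}(1 - (1+\tau)/2)\big) = 0$ and the imaginary part must also line up so that the net height vanishes, giving $e^{-i\theta} \parallel i(\tau-1)$, that is $\theta = \arg(\tau-1)-\pi/2$; the $\cR$ case is identical with $1$ replaced by $1/2$. The second geometric condition — that the images of $0$ and $(1+\tau)/2$ have the same horizontal coordinates — is exactly what the computation with the function $\psi(\tau) = \int_0^{(1+\tau)/2} G\,dz = \int_0^{(1+\tau)/2} dz/G$ (the two integrals being equal by the symmetry $\sn(2K+iK'-z;\tau) = 1/\sn(z;\tau)$) produces: the first horizontal coordinate vanishes identically, and the second vanishes iff $-\cos\theta\,\im\psi + \sin\theta\,\re\psi = 0$, i.e.\ $\theta = \arg\psi(\tau) =: \theta_h(\tau)$.

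For necessity I would run the argument in reverse: if the Weierstrass data define an immersion, then the resulting TPMSg3 is embedded (or at least immersed), so by Meeks its hyperelliptic points identify with lattice and half-lattice points of $\Lambda$; combined with the screw and rotational symmetries fixed in Section~\ref{sec:symmetry}, this forces the poles and zeros of $G$ into the alternating, equally-spaced, vertically-aligned configuration described above, hence forces both $\theta = \theta_v(\tau)$ and $\theta = \theta_h(\tau)$. Since $\theta$ is a single real parameter, the two necessary conditions can hold simultaneously only if $\theta_v(\tau) = \theta_h(\tau)$, which is \eqref{eq:period}. Conversely, when this equality holds we set $\theta$ to the common value and invoke sufficiency.

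The main obstacle, I expect, is the bookkeeping in the sufficiency direction: one must verify that the two listed geometric conditions on the poles and zeros really do imply that \emph{all} periods close, not just the two distinguished ones that gave $\theta_v$ and $\theta_h$. This is where the interdependence of the symmetries (the screw symmetry being a composition of two horizontal order-$2$ rotations, the inversions being forced, etc.) must be used carefully to reduce the cycle count, essentially reproducing the ``two ribbons fitting together'' picture of~\cite{kgb1996} in the present twisted setting. The explicit identification of $\theta_v$ as $\arg(\tau-1)-\pi/2$ (resp.\ $\arg(\tau-1/2)-\pi/2$) and of $\theta_h$ as $\arg\psi(\tau)$ is then a short computation, already carried out in the text preceding the lemma, so once the reduction is in place the statement follows.
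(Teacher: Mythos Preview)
Your proposal is correct and follows essentially the same approach as the paper: the lemma is not given a separate proof block but is the summary of the preceding discussion in Section~\ref{sec:period}, and you have accurately reconstructed that discussion---the ribbon-fitting sufficiency argument from~\cite{kgb1996}, the derivation of $\theta_v$ from the pitch condition, the derivation of $\theta_h$ via the symmetry $\sn(2K+iK'-z)=1/\sn(z)$, and the identification of the symmetry-reduction bookkeeping as the only nontrivial step. One small remark: for the necessity direction you need not invoke Meeks' embeddedness result; once the data define an immersion the two distinguished periods already force $\theta=\theta_v$ and $\theta=\theta_h$ directly.
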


We are finally ready to give the existence proof.

\section{Existence proof}\label{sec:proof}

\subsection{tG family}

In Figure~\ref{fig:tGtau}, we show for $\cT$ the numerical solutions
to~\eqref{eq:period} with $-1<\re \tau < 1$, accompanied by two half-circles
representing the CLP family.  Our task is to prove the existence of the
continuous 1-parameter solution curve that we see in the picture, which we call
the tG family.  Let the shaded domain in the figure be denoted by
\[
	\Omega_t := \{ \tau \mid \im\tau>0, -1<\re\tau<1, |\tau\pm1/2|>1/2\}.
\]

\begin{figure}
	\includegraphics[width=0.5\textwidth]{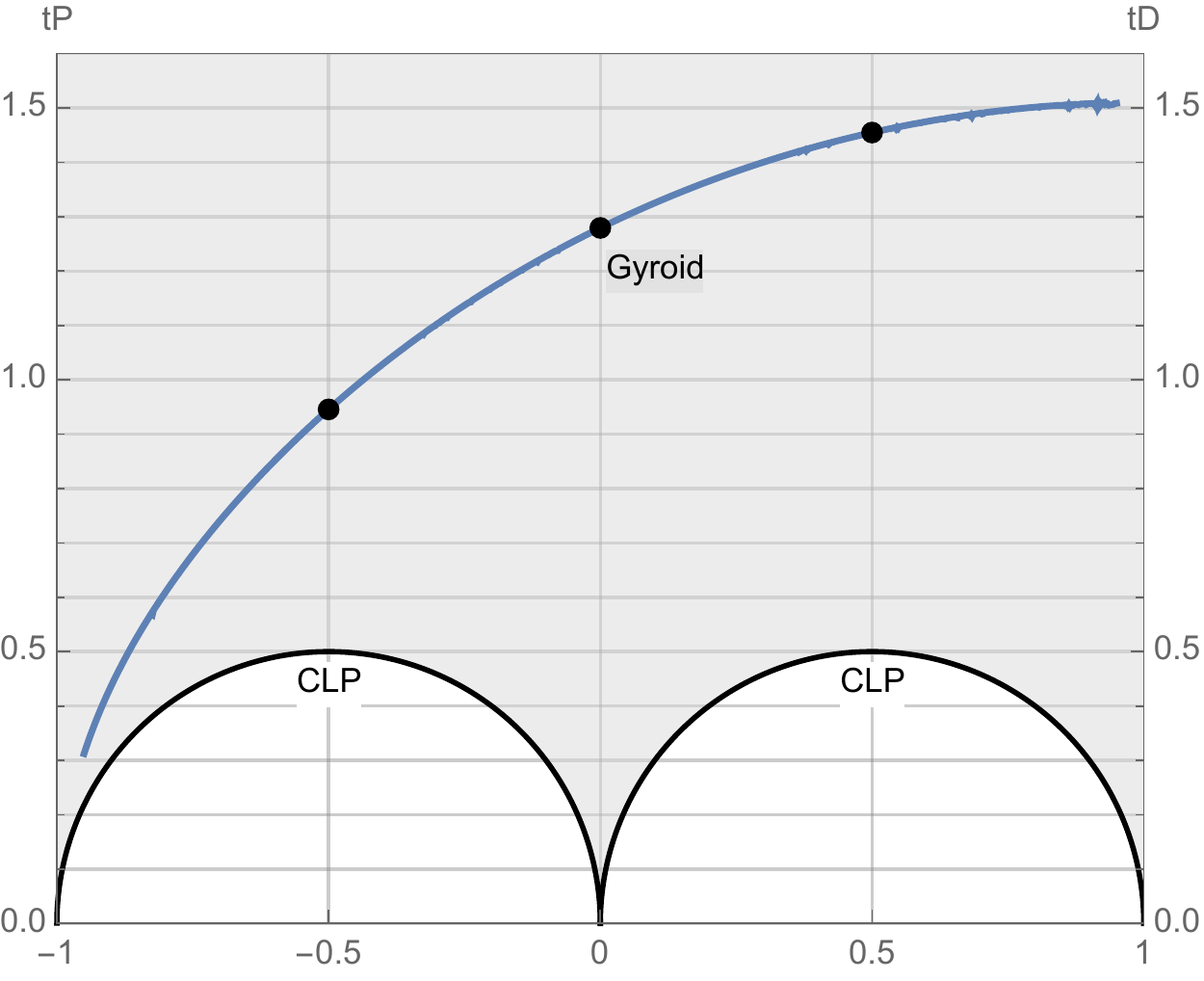}
	\caption{
		Solutions for $\cT$ to~\eqref{eq:period} with $-1 < \re \tau < 1$.
	}
	\label{fig:tGtau}
\end{figure}

\begin{proposition}
	There exists a continuous 1-dimensional curve of $\tau$ in $\Omega_t$ that
	solves~\eqref{eq:period}.  This curve tends to $-1$ at one end and to $1+t i$
	for some $0 < t < \infty$ at the other end.  Moreover, the TPMSg3s
	represented by points on the curve are all embedded.
\end{proposition}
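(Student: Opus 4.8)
\emph{Proof plan.} \textbf{Step 1 (Reformulation as a nodal set).} The first step is to rewrite the period equation \eqref{eq:period} for $\cT$ in a workable form. After the substitution $z=\frac{1+\tau}{2}t$, the integral defining $\psi(\tau)=\int_0^{(1+\tau)/2}\,dz/G$ runs over the fixed interval $t\in[0,1]$ with an integrand that is holomorphic in $\tau$ and has only integrable endpoint singularities (from the $1/2$-power zero and pole of $G$), so $\psi$ is holomorphic on $\Omega_t$. Reading $\psi$ off the flat structure of Figure~\ref{fig:Tannulus} — it is $\pm i$ times the displacement between the images of the two distinct vertices $0$ and $(1+\tau)/2$ of the twisted square catenoid — shows that $\psi$ has no zero on $\Omega_t$. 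Hence $W(\tau):=i\psi(\tau)/(\tau-1)$ is a holomorphic, zero-free function on the simply connected domain $\Omega_t$; the equation \eqref{eq:period} for $\cT$ is precisely $W(\tau)\in\R_{>0}$; and I would study the harmonic function $\mathcal P:=\im W$. Its nodal set $\mathcal P^{-1}(0)$ is the locus where $W$ is real and nonzero, and because $W$ is continuous and nonvanishing, $W$ has constant sign along each connected component of $\mathcal P^{-1}(0)$; thus every such component consists entirely of tG-type immersions (where $W>0$) or entirely of their opposite associate surfaces (where $W<0$), and it suffices to produce a component of the first kind joining $\tau=-1$ to some $\tau=1+ti$.

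\textbf{Step 2 (Boundary values from the classical families).} Next I would compute $\mathcal P$ along $\partial\Omega_t$. On the three non-corner parts of the boundary — the tP line $\re\tau=-1$, the tD line $\re\tau=1$, and the two CLP half-circles $|\tau\pm1/2|=1/2$ — the torus is rectangular or rhombic, so $\psi(\tau)$, hence $\arg\psi(\tau)$, is available in closed form in terms of complete elliptic integrals; these are exactly the cases where the twisted catenoids have straight edges and $\arg\psi$ takes the special values in $\frac{\pi}{4}\Z$ used in \cite{weyhaupt2006}. Comparing with $\arg(\tau-1)-\pi/2$ along each arc, the key computation should show that $\mathcal P$ is sign-definite on each CLP half-circle and on the tP line, and changes sign on the tD line exactly once, at a point $\tau=1+ti$; the degenerate corner $\tau=-1$ then appears as the second boundary point where $\mathcal P$ vanishes.

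\textbf{Step 3 (Degenerate limits — the main obstacle).} The behavior of $\mathcal P$ at the corners $\tau=-1,0,1$ and as $\tau\to1+i\infty$ must be treated separately, because there the torus degenerates and the surface converges, after rescaling, to a saddle tower. This asymptotic analysis — postponed to the Appendix — is the main technical obstacle; concretely it requires estimating $\psi$, an integral of a fractional power of $\sn(4Kz;\tau)$ whose modulus $m=\lambda(2\tau)$ tends to $0$ or $1$, by splitting off a ``saddle-tower core'' contribution and bounding the remainder uniformly. The conclusions I would need from it are: the nodal line of $\mathcal P$ genuinely emanates from $\tau=-1$ (the $4$-fold saddle tower), the corners $\tau=0$ and $\tau=1$ are not nodal boundary points, and $\mathcal P$ has opposite signs near $\tau=1$ and near $\tau=1+i\infty$ on the tD line, which forces the tD-intersection parameter $t$ to be finite and positive.

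\textbf{Step 4 (Extracting the curve and embeddedness).} With the boundary signs in place, existence of the curve follows from the nodal structure of the nonconstant harmonic function $\mathcal P$: a nodal line cannot terminate in the interior of $\Omega_t$, and the only boundary points where $\mathcal P$ vanishes are $-1$ and $1+ti$, so $\mathcal P^{-1}(0)$ contains a connected component $\Gamma$ joining these two points. Being a connected piece of a real-analytic $1$-dimensional set, $\Gamma$ is path-connected and contains a simple arc from $-1$ to $1+ti$; this arc is the tG family, and by Step 1 it consists of tG-type immersions — with the screw and rotational symmetries built into $\cT$ — once $W>0$ is checked at a single point, which I would do at $\re\tau=0$, where \eqref{eq:period} reduces to an explicitly solvable equation whose solution is the gyroid. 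In particular $\Gamma$ crosses $\re\tau=0$ and hence passes through the gyroid, agreeing there with the local family of \cite{weyhaupt2006, weyhaupt2008}. Finally, for embeddedness: in the twisted-catenoid picture a surface in $\cT$ is embedded iff its fundamental twisted catenoid is embedded and adjacent ribbons fit into one another — the latter automatic from the screw and inversion symmetries — a condition that is open and closed along the connected family $\Gamma$; since the gyroid is embedded \cite{kgb1996}, every TPMSg3 on $\Gamma$ is embedded.
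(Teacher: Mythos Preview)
Your harmonic-function framework is close in spirit to the paper's comparison of $\theta_h$ and $\theta_v$ (your $\mathcal P=\im W$ has the same sign as $\sin(\theta_v-\theta_h)$), but Step~2 contains a genuine error that breaks the argument. On the tD line $\re\tau=1$ we have $\theta_h=\theta_v=0$ for \emph{every} $\tau$: each tD surface already solves~\eqref{eq:period}. Hence $W>0$ is real and $\mathcal P\equiv 0$ along the whole segment $\re\tau=1$, not at a single point. Your claimed ``$\mathcal P$ changes sign on the tD line exactly once'' is therefore false, and the Step~3 statement that ``$\mathcal P$ has opposite signs near $\tau=1$ and near $\tau=1+i\infty$ on the tD line'' is false for the same reason. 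With an entire boundary segment in the nodal set, the nodal-line argument of Step~4 no longer singles out a point $1+ti$; one must instead detect where an \emph{interior} nodal arc of $\mathcal P$ meets the tD line.

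That detection is exactly the content of the paper's asymptotic work, and it cannot be read off from values of $\mathcal P$ \emph{on} tD. The paper compares $\theta_h$ and $\theta_v$ on a strip $\tau=1-\epsilon+i\eta$ just \emph{inside} $\Omega_t$: for $\eta$ large, $\partial\theta_h/\partial\epsilon\to\pi/4$ while $\partial\theta_v/\partial\epsilon\to 0$, so $\theta_h>\theta_v$; near $\tau=1$, Lemma~\ref{lem:tau0} gives $\theta_h\to 0^-$ while $\theta_v\to 0^+$, so $\theta_h<\theta_v$. In your language this is a sign change of the \emph{normal derivative} of $\mathcal P$ along the tD boundary, not of $\mathcal P$ itself. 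To repair your proof you would need this normal-derivative analysis, which is essentially the paper's Appendix; once you have it, the remaining boundary pieces (tP, the two CLP arcs, $\im\tau\to\infty$) and the embeddedness step go through much as you outline, though the paper obtains the CLP signs from the flat-structure picture of Figure~\ref{fig:CLP} rather than from closed-form values of $\psi$.
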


\begin{proof}
	We examine the angles $\theta_v$ and $\theta_h$ on the boundaries of
	$\Omega_t$.

	On the vertical line $\re\tau = -1$, we see immediately that $0 < \theta_v <
	\pi/2$.  Since this line corresponds to the tP family, we know very well
	that the image of $(1+\tau)/2$ is directly above the image of $0$ when $dh =
	e^{-i \pi/2} dz$.  Hence $\theta_h = \pi/2 > \theta_v$.

	The half-circle $|\tau+1/2| = 1/2$ corresponds to the CLP family.  We know
	very well that the image of $(1+\tau)/2$ is directly above the image of $0$
	when $dh = \exp(-i(\arg \tau - \pi/2)) dz$, so $\theta_h = \arg\tau-\pi/2$.
	Then the inequality $\theta_h < \theta_v$ follows from elementary geometry.

	On the half-circle $|\tau-1/2| = 1/2$, it follows from elementary geometry
	that $\theta_v = \arg\tau$.  This half-circle corresponds again to the CLP
	family.  When $dh = \exp(-i \theta_v) dz$, the image of $\tau/2$ is directly
	above the image of $0$.  In this case, we know very well that the flat
	structure of $\Phi_1$ is as depicted in Figure~\ref{fig:CLP};
	see~\cite{weyhaupt2006}.  In particular, we have
	\[
		\arg \int_0^{(1+\tau)/2} dh \cdot G = \arg \int_0^{(1+\tau)/2} dz \cdot G \exp(-i \theta_v) = \theta_h - \theta_v < 0,
	\]
	hence $\theta_h < \theta_v$.

	\begin{figure}
		\includegraphics[width=0.3\textwidth]{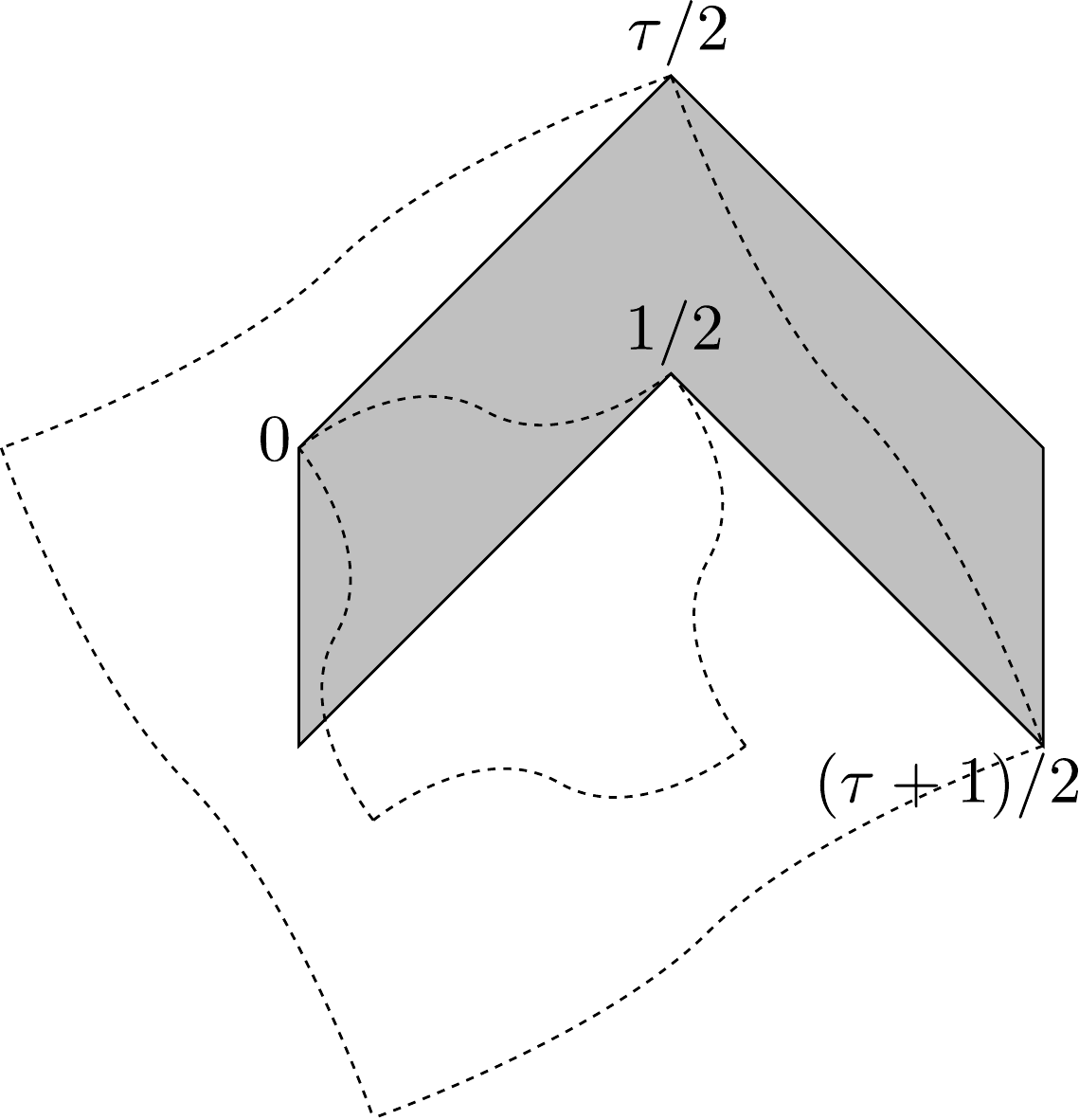}
		\caption{
			Image of the rectangle with vertices at $0$, $1/2-\tau/2$, $1/2+\tau/2$,
			$\tau$, under the map $\Phi_1$ for a CLP surface (the grey area).  The
			twisted square annulus is sketched (exaggeratedly) in dashed curves.
			\label{fig:CLP}
		}
	\end{figure}

	On the vertical line $\re\tau = 1$, we see immediately that $\theta_v = 0$.
	Since this line corresponds to the tD family, we know very well that the
	image of $(1+\tau)/2$ is directly above the image of $0$ when $dh = dz$, so
	$\theta_h = 0$.  Hence this line solves the period condition $\theta_h =
	\theta_v$, but not very helpful for our proof.

	As $\im\tau \to \infty$, we see immediately that $\theta_v \to 0$.
	Asymptotic behavior of $\theta_h$ is technical, so we postpone the details to
	Appendix, where Lemma~\ref{lem:tauinf} states that $\theta_h \to
	\re(1-\tau)\pi/4$.  Hence $\theta_h > \theta_v$ for $\im \tau$ sufficiently
	large.

	Now consider $\tau = 1 - \epsilon + i \eta$ for small $\epsilon$.  It is
	immediate that the derivative of $\theta_v$ with respect to $\epsilon$ at
	$\epsilon = 0$ is $1/\eta$, hence tends to $0$ as $\eta \to \infty$.
	Meanwhile, the asymptotic behavior of $\theta_h$ tells us that $\partial
	\theta_h / \partial\epsilon \to \pi/4 > 0$ as $\eta \to \infty$.
	Consequently, there exist two positive numbers $H$ and $\delta$ such that
	$\theta_h(1 - \epsilon + \eta i) > \theta_v(1 - \epsilon + \eta i)$ for all
	$\eta>H$ and $\epsilon<\delta$.

	As $\tau \to 1$ within $\Omega_t$, Lemma~\ref{lem:tau0} in Appendix claims
	that $\theta_h(\tau) \to 0$ from the negative side, but one sees immediately
	that $\theta_v(\tau) \to 0$ from the positive side.  Consequently, there
	exists a neighborhood $U$ of $1$ such that $\theta_h(\tau) < \theta_v(\tau)$
	for all $\tau \in U \cap \Omega_t$.

	\medskip

	Note that $\theta_h$ and $\theta_v$ are both real analytic functions in the
	real and imaginary part of $\tau$, hence the solution set of the period
	condition~\eqref{eq:period} is an analytic set.  By the continuity, we
	conclude that the solution set contains a connected component that separates
	the half-circles $|\tau \pm 1/2| = 1/2$ from the line $\re \tau = -1$ and the
	infinity.  Moreover, this set must also separate a neighborhood $U$ of $1$
	from the set $\{\tau \mid \re \tau > 1-\delta, \im \tau > H\}$.  Because of
	the analyticity, we may extract a continuous curve from the connected
	component, which is the tG family.  In particular, this curve must tend to
	the common limit of CLP and tP (a saddle tower of order 4 at $\tau \to -1$),
	and intersect the tD family at a finite, positive point.

	The proof of~\cite[Lemma~4.4]{weyhaupt2008} implies that the gyroid is the
	only embedded TPMSg3 on the vertical line $\re \tau = 0$ that solves the
	period condition.  Hence the tG family must contain the gyroid, whose
	embeddedness then ensures the embeddedness of all TPMSg3s in the tG family.
	This follows from~\cite[Proposition~5.6]{weyhaupt2006}, which was essentially
	proved in~\cite{meeks1990}.
\end{proof}

In Figure~\ref{fig:tGgallery}, we show two adjacent ribbons for some tG
surfaces.  They form a fundamental unit for the translational symmetry group.

\begin{figure}
	\includegraphics[width=0.6\textwidth]{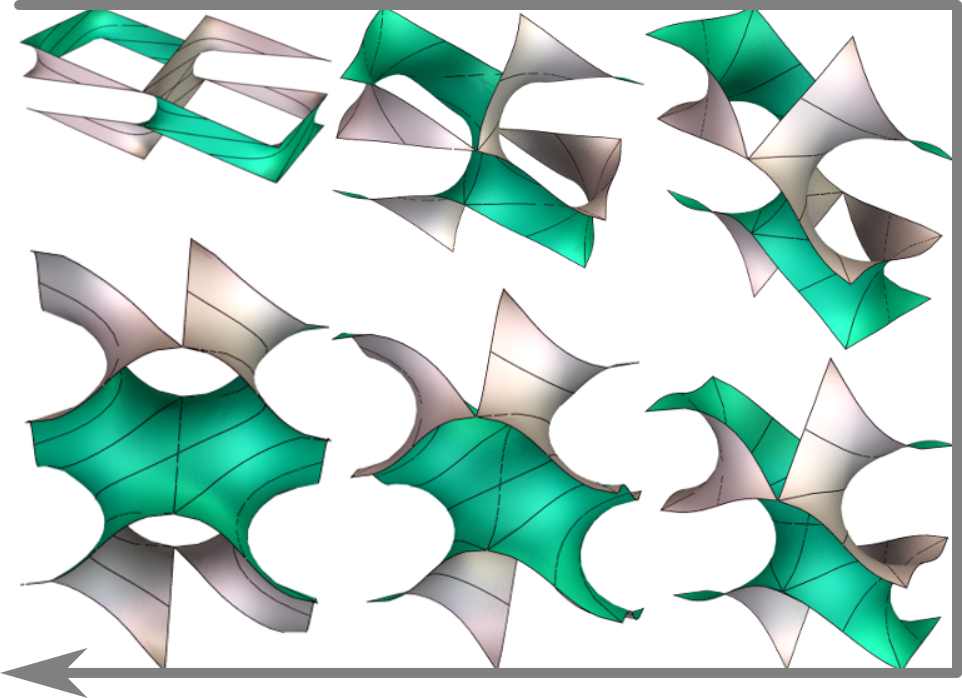}
	\caption{
		tG surfaces with $\re\tau=-0.95$, $-0.6$, $-0.2$, $0.2$, $0.6$ and $0.95$,
		in the indicated order.
	}
	\label{fig:tGgallery}
\end{figure}

\subsection{rGL family}

In picture~\ref{fig:tGtau}, we show for $\cR$ the numerical solutions
to~\eqref{eq:period} with $-1<\re \tau < 1/2$.  The two half-circle represents
an order-3 analogue of the CLP surface, termed hCLP in~\cite{lidin1990}, but
already known to Schwarz~\cite{schwarz1890}; see also~\cite{fogden1992,
ejiri2015} .  Surfaces in hCLP are not embedded, but also not dense in the
space.  It is very easy to visualize and behaves very much like CLP.  In
particular, it's Weierstrass data is as shown in Figure~\ref{fig:hclp}; compare
CLP in Table~\ref{tbl:weierstrass}.

Our task is to prove the existence of the continuous 1-parameter solution curve
that we see in the picture, which we call the rGL family.  Let the shaded
domain in the figure be denoted by
\[
	\Omega_r = \{\tau \mid \im\tau>0, -1<\re\tau<1/2, |\tau\pm 1/2|>1/2\}.
\]

\begin{figure}
	\includegraphics[width=0.5\textwidth]{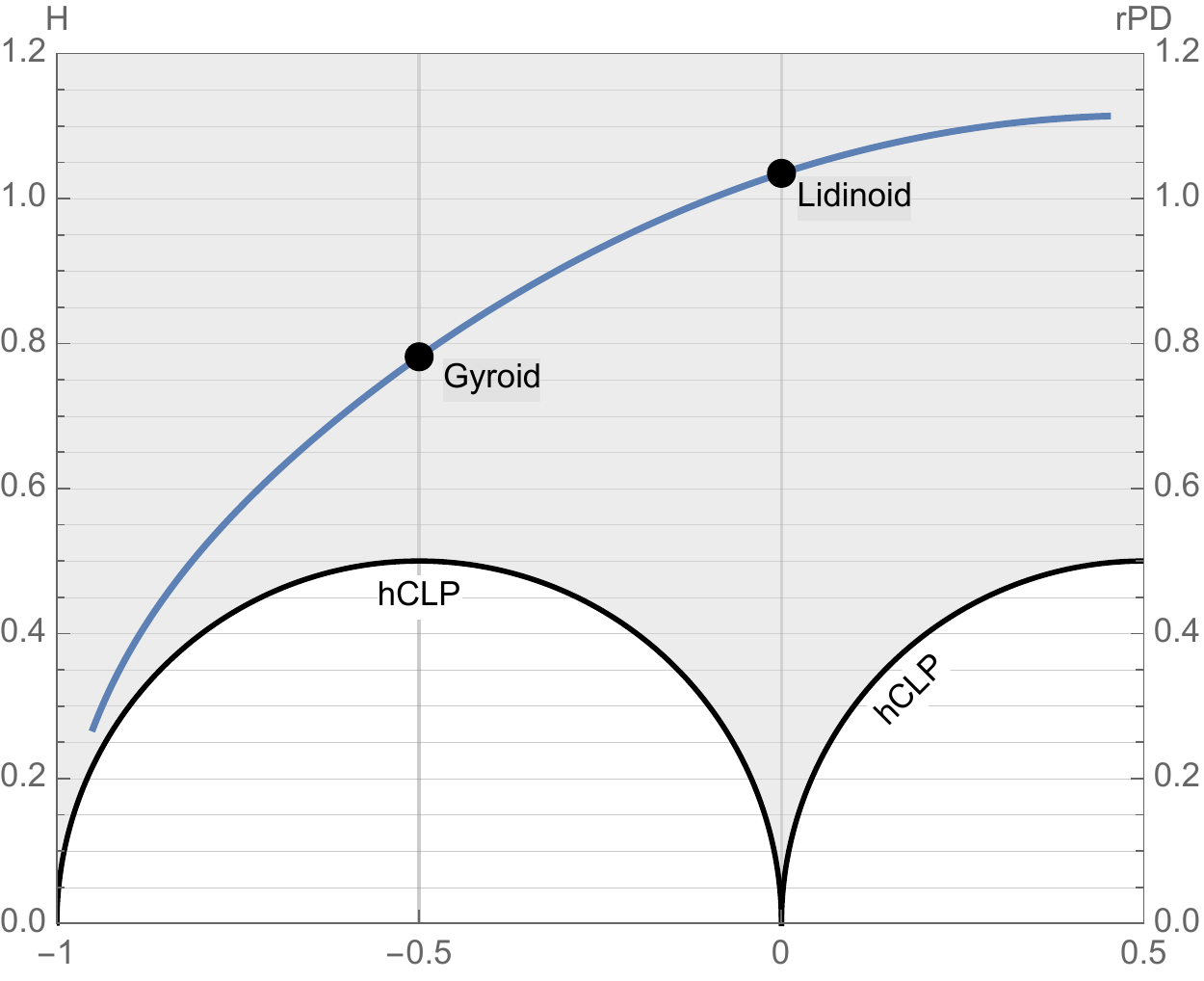}
	\caption{Solutions for $\cR$ to~\eqref{eq:period} with $-1 < \re \tau < 1/2$.}
	\label{fig:rGLtau}
\end{figure}

\begin{figure}
	\tikz[baseline=(current bounding box.center),scale = 2]{
		\fill[fill=lightgray] (0,0)--(-0.36,0.48)--(0.14,0.48)--(0.5,0)--cycle;
		\filldraw (0,0) circle [radius = .05];
		\filldraw (0.5,0) circle [radius = .05];
		\filldraw[fill=white] (-0.18,0.24) circle [radius = .05];
		\filldraw[fill=white] (0.32,0.24) circle [radius = .05];
		\draw[very thick,->] (0,0)--(0.24,0.18);
		\draw[dashed] (0,0) arc (0:180:0.5);
	}
	\caption{Weierstrass data for hCLP.}
	\label{fig:hclp}
\end{figure}

\begin{proposition}
	There exists a continuous 1-dimensional curve of $\tau$ in $\Omega_r$ that
	solves~\eqref{eq:period}.  This curve tends to $-1$ at one end and to $1/2+t
	i$ for some $0 < t < \infty$ at the other end.  Moreover, the triply periodic
	minimal surfaces of genus three represented by points on the curve are all
	embedded.
\end{proposition}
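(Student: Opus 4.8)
The plan is to repeat, essentially verbatim, the boundary analysis used for the tG proposition, with $\Omega_t$ replaced by $\Omega_r$ and the roles of the tP, tD and CLP families played by the H, rPD and hCLP families. Concretely, I would track the sign of $\theta_h(\tau)-\theta_v(\tau)$ along $\partial\Omega_r$, where $\theta_v(\tau)=\arg(\tau-1/2)-\pi/2$ and $\theta_h(\tau)=\arg\psi(\tau)$. On the vertical line $\re\tau=-1$ (the H family) one has $\tau-1/2$ in the second quadrant, so $0<\theta_v<\pi/2$, whereas $\theta_h=\pi/2$ because the choice $dh=e^{-i\pi/2}dz$ closes the H surface; hence $\theta_h>\theta_v$ there. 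On the circle $|\tau+1/2|=1/2$ (the hCLP family, cf.\ Figure~\ref{fig:hclp}) the horizontal associate angle is $\theta_h=\arg\tau-\pi/2$, and since shifting a second-quadrant point left by $1/2$ increases its argument, $\arg\tau<\arg(\tau-1/2)$, so $\theta_h<\theta_v$. On the circle $|\tau-1/2|=1/2$ (again hCLP, now with the marking in which $\tau/2$ lies above $0$) I would use the flat structure of $\Phi_1$, the order-$3$ analogue of Figure~\ref{fig:CLP}: with $dh=e^{-i\theta_v}dz$ one reads off $\arg\int_0^{(1+\tau)/2}dh\cdot G=\theta_h-\theta_v<0$, so again $\theta_h<\theta_v$. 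Finally, on $\re\tau=1/2$ (the rPD family) one has $\theta_v=0=\theta_h$, so this line solves the period condition identically but, as with the tD line in the tG argument, is not directly useful.

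Next I would handle the two degenerate ends, where the real work lies. As $\im\tau\to\infty$ one has $\theta_v\to0$, while the order-$3$ counterpart of Lemma~\ref{lem:tauinf} gives $\theta_h\to\re(1/2-\tau)\pi/3$, which is strictly positive for $\re\tau<1/2$; hence $\theta_h>\theta_v$ for $\im\tau$ large. Along $\tau=1/2-\epsilon+i\eta$, the derivative $\partial\theta_v/\partial\epsilon$ tends to $0$ as $\eta\to\infty$ while $\partial\theta_h/\partial\epsilon\to\pi/3>0$, so there are positive numbers $H,\delta$ with $\theta_h>\theta_v$ whenever $\eta>H$ and $\epsilon<\delta$. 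At the corner $(1+i)/2$, where $\re\tau=1/2$ meets $|\tau-1/2|=1/2$, the counterpart of Lemma~\ref{lem:tau0} gives $\theta_h\to0$ from below while $\theta_v\to0$ from above, so $\theta_h<\theta_v$ on a one-sided neighbourhood inside $\Omega_r$. Near the cusp $\tau=0$ both bounding circles already give $\theta_h<\theta_v$, so no extra input is needed there.

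Since $\theta_h$ and $\theta_v$ are real-analytic in $(\re\tau,\im\tau)$, the locus $\{\theta_h=\theta_v\}$ is an analytic set; the sign pattern above forces it to contain a connected component that separates $\{\re\tau=-1\}$ and the region at infinity (where $\theta_h>\theta_v$) from the two bounding circles and the corner $(1+i)/2$ (where $\theta_h<\theta_v$), and that also separates a neighbourhood of $(1+i)/2$ from $\{\re\tau>1/2-\delta,\ \im\tau>H\}$. Extracting a continuous arc from this component gives the rGL family: one end limits to $\tau=-1$, the common degenerate limit of the H and hCLP families, i.e.\ a $3$-fold saddle tower, and the other end meets the rPD line $\re\tau=1/2$ at some $1/2+ti$ with $0<t<\infty$. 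The arc necessarily crosses $\re\tau=-1/2$ and $\re\tau=0$; the uniqueness statement hidden in Weyhaupt's work (the rG, resp.\ rL, analogue of~\cite[Lemma~4.4]{weyhaupt2008}) says the gyroid, resp.\ the Lidinoid, is the only embedded TPMSg3 on those two lines that solves~\eqref{eq:period}, so the rGL family contains both; their embeddedness~\cite{kgb1996} then propagates to every member of the family by~\cite[Proposition~5.6]{weyhaupt2006}, which is essentially due to~\cite{meeks1990}.

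As with tG, the main obstacle is the asymptotic analysis collected in the Appendix, i.e.\ proving $\theta_h\to\re(1/2-\tau)\pi/3$ as $\im\tau\to\infty$ and $\theta_h\to0^-$ near the corner $(1+i)/2$, since the integral $\psi(\tau)=\int_0^{(1+\tau)/2}G\,dz$ admits no closed form on a non-rectangular torus. A secondary, and much lighter, point is to check the hCLP flat-structure claims on the circles $|\tau\pm1/2|=1/2$; since hCLP is as easy to control as CLP, this should be routine.
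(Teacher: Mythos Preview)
Your overall strategy matches the paper's, and most boundary pieces are handled correctly. There is, however, a genuine gap on the half-circle $|\tau-1/2|=1/2$, and it is precisely where the rGL argument diverges from the tG one. In the tG case, elementary geometry on that circle gives $\theta_v=\arg(\tau-1)-\pi/2=\arg\tau$, so taking $dh=e^{-i\theta_v}dz$ \emph{is} the CLP choice (the one placing $\tau/2$ directly above $0$), and the flat-structure picture yields $\theta_h-\theta_v<0$ immediately. For rGL, on the same circle one has $\theta_v=\arg(\tau-1/2)-\pi/2=2\arg\tau-\pi/2$, which is \emph{not} equal to $\arg\tau$. The hCLP flat structure requires $dh=e^{-i\arg\tau}dz$, not $dh=e^{-i\theta_v}dz$; what one reads off from it is only $\theta_h<\arg\tau$, which is inconclusive since also $\theta_v<\arg\tau$ on this circle. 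The paper fills this gap with an extra step: an extremal-length argument shows $\theta_h-\arg\tau$ increases monotonically along the circle (from $-\pi/2$ to $0$), while $\theta_v-\arg\tau=\arg\tau-\pi/2$ decreases monotonically; the two meet exactly at $\tau=(1+i)/2$, so $\theta_h<\theta_v$ holds only on the \emph{left quarter} of the circle, which is the part lying in $\partial\Omega_r$.

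Relatedly, invoking ``the counterpart of Lemma~\ref{lem:tau0}'' at $(1+i)/2$ is misplaced. That lemma treats degeneration as $\tau$ approaches the real axis (where $\tilde m\to 1$); the point $(1+i)/2$ is an ordinary interior point of the upper half-plane, and no such asymptotic analysis is available there. The paper uses no rGL analogue of Lemma~\ref{lem:tau0}; the monotonicity argument above already handles the corner $(1+i)/2$, and together with the $\im\tau\to\infty$ asymptotics it forces the solution curve to land on $\re\tau=1/2$ at finite height.
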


\begin{proof}
	Part of the proof is very similar to tG, so we just provide a sketch.

	The line $\re\tau = -1$ corresponds to the H family, and we have $\theta_h =
	\pi/2 > \theta_v$.

	The half-circle $|\tau+1/2| = 1/2$ corresponds to the hCLP family, and we
	have $\theta_h < \theta_v$.

	The line $\re\tau = 1/2$ correspond to the rPD family, and we have $\theta_h
	= \theta_v = 0$.  This solves the period condition, but not helpful for us.

	As $\im\tau \to \infty$, we have $\theta_h \to \re(1/2-\tau)\pi/3 > \theta_v
	= 0$.  The argument for the asymptotic behavior is very similar as in the
	proof of Lemma~\ref{lem:tauinf}, so we will not repeat it.

	By the same argument as for tG, we conclude that $\theta_h > \theta_v$ for
	$\tau = 1/2-\epsilon+i\eta$ as long as $\eta > H$ and $\epsilon < \delta$ for
	some positive constants $H$ and $\delta$.

	\medskip

	More care is however needed on the half-circle $|\tau-1/2| = 1/2$.  It
	corresponds again to the hCLP family.  When $dh = \exp(-i \arg\tau) dz$, the
	image of $\tau/2$ is directly above the image of $0$.  In this case, we know
	very well that the flat structure of $\Phi_1$ is as depicted in
	Figure~\ref{fig:hCLP}.  We see that
	\[
		\arg \int_0^{(1+\tau)/2} dh \cdot G = \arg \int_0^{(1+\tau)/2} dz \cdot G \exp(-i \arg\tau) = \theta_h - \arg\tau< 0.
	\]
	\begin{figure}[hbt]
		\includegraphics[width=0.3\textwidth]{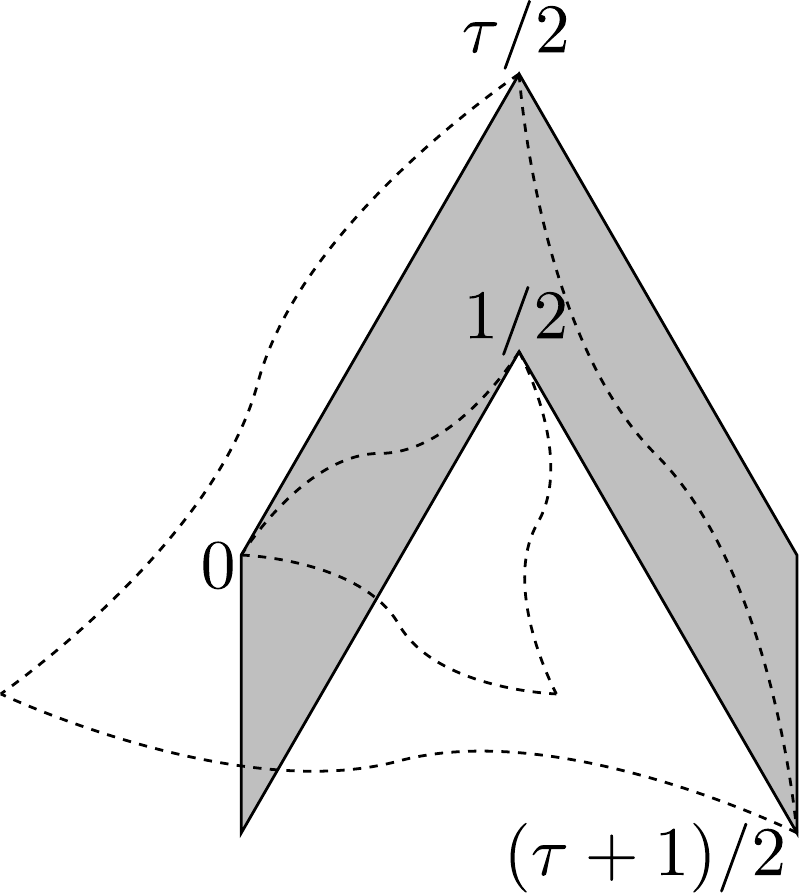}
		\caption{
			Image of the rectangle with vertices at $0$, $1/2-\tau/2$, $1/2+\tau/2$,
			$\tau$, under the map $\Phi_1$ for an hCLP surface (the grey area).  The
			twisted triangular annulus is sketched (exaggeratedly) in dashed curves.
			\label{fig:hCLP}
		}
	\end{figure}

	Let $a$ denote the length of the tilted segments (e.g.\ from $0$ to $\tau/2$)
	in the flat structure, and $b$ the length of the vertical segments (e.g.\
	from $1/2$ to $\tau/2$).  If follows from an extremal length argument that
	the ratio $a/b$ increases monotonically as $\tau$ travels along the
	half-circle with increasing $\re\tau$.  Then we see that $\theta_h-\arg\tau$
	increases monotonically from $-\pi/2$ to $0$.

	On the other hand, it follows from elementary geometry that $\theta_v -
	\arg\tau = \arg\tau - \pi/2$, which decreases monotonically as $\tau$ travels
	along the half-circle with increasing $\re\tau$.  Consequently, there is a
	unique $\tau$ on the half-circle for which $\theta_v = \theta_h$, and we know
	very well that this occurs at $\tau = (1+i)/2$.  At this point, it is interesting
	to verify that $a=b$, hence $\theta_h - \arg\tau = -\pi/4 = -\arg\tau$.

	Therefore, by monotonicity, we have $\theta_h < \theta_v$ on the left quarter
	of this half-circle.

	\medskip

	We then conclude the existence of a continuous curve of $\tau$, namely the rG
	family, that solves $\theta_h = \theta_v$ and separates the half-circles
	$|\tau \pm 1/2| = 1/2$ from the line $\re \tau = -1$ and the infinity.  This
	curve must tend to the common limit of hCLP and H (a saddle tower of order 3
	at $\tau \to -1$) and intersect rPD at a finite point.

	Moreover, the Lidinoid and the gyroid are the unique solutions on their
	respective vertical lines.  Hence the rGL family must contain them both.
	Their embeddedness then ensures the embeddedness of all TPMSg3s in the rGL
	family.
\end{proof}

In Figure~\ref{fig:rGLgallery}, we show two adjacent ribbons for some rGL
surfaces.  They form a fundamental unit for the translational symmetry group.

\begin{figure}
	\includegraphics[width=0.6\textwidth]{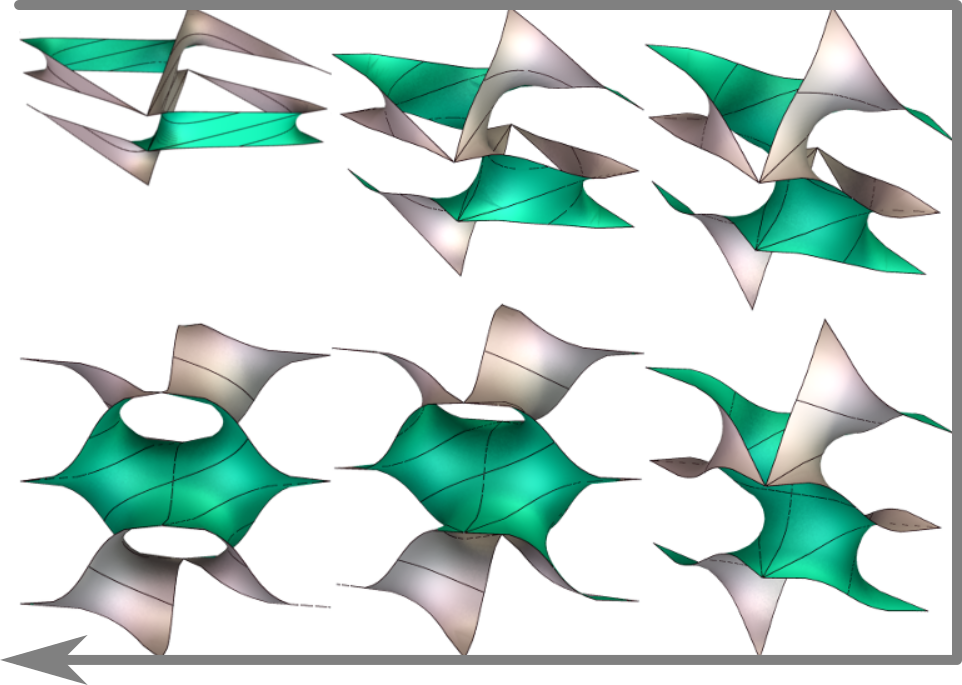}
	\caption{
		rGL surfaces with $\re\tau=-0.95$, $-0.7$, $-0.4$, $-0.1$, $0.2$ and $0.45$,
		in the indicated order.
	}
	\label{fig:rGLgallery}
\end{figure}

\section{Discussion}\label{sec:discuss}

\subsection{Bifurcation}

A TPMSg3 is a \emph{bifurcation instance} if the same deformation of its
lattice could lead to different deformations (bifurcation branches) of the
surface.  Both the tG--tD and the rGL--rPD intersections are bifurcation
instances.

Bifurcation instances among classical TPMSg3s are systematically investigated
in~\cite{koiso2014}, but some of them had no explicit bifurcation branch at the
time.  Two bifurcation instances were discovered in tD~\cite{koiso2014}.  The
recently discovered t$\Delta$ family provides the missing bifurcation branch to
one of them~\cite{chen2018a}.  The other bifurcation instance seems to escape
the attention.  Its conjugate is identified as the tP surface obtained from the
square catenoid of maximum height, but no bifurcation branch was previously
known for itself.  Numerics from~\cite{koiso2014} and~\cite{fogden1999} (see
also~\cite{schroderturk2006}), which we can confirm with the help
of~\eqref{eq:explicitT} in Appendix, shows that this is exactly the tG--tD
intersection with $\tau \approx 1 + 1.51019 i$.  Hence tG provides the missing
bifurcation branch.

Likewise, two bifurcation instances were discovered in rPD~\cite{koiso2014}.
One of them is identified as the rPD surface obtained from the triangular
catenoid of maximum height.  The other is its conjugate, for which no
bifurcation branch was previously known.  Numerics from~\cite{koiso2014}
and~\cite{fogden1999} (see also~\cite{schroderturk2006}) shows that this is
exactly the rGL--rPD intersection, hence rGL provides the missing bifurcation
branch.

Therefore, all bifurcation instances discovered in~\cite{koiso2014} have now an
explicit bifurcation branch.

\begin{remark}
	Curiously, both tG--tD and rGL--rPD intersections are conjugate to catenoids
	of maximum height.  Numerics shows that the height of the twisted catenoid
	seems to increase monotonically with $\re\tau$ along the tG and rGL families,
	and reaches the maximum at the intersection.
\end{remark}

\subsection{Reflection group}

In this part, we point out that ``reflections in classical TPMSg3 families''
generate a reflection group that acts on $\cT$ and $\cR$.

We have seen that each $\tau$ corresponds to a marked twisted catenoid.  The
marked square catenoid is invariant under $\tau \mapsto \tau+4$, and the marked
triangular catenoid is invariant under $\tau \mapsto \tau+3$.

For a twisted square catenoid, as one increases $\re\tau$ beyond $1$, the twist
angle increases beyond $\pi$.  However, note that $\tau \mapsto 2-\tau$ only
results in a reflection of the marked catenoid.  Consequently, their associate
surfaces with the same associate angle differ only by handedness.  So, if
$\tau$ closes the period with associate angle $\theta$ and gives an embedded
TPMSg3 in $\cT$, then $2-\tau$ gives the same TPMSg3 with the same associate
angle.  Similarly, one can decrease $\re\tau$ below $-1$, but $\tau \mapsto
-2-\tau$ only results in a reflection, hence $\tau$ and $-2-\tau$ gives the
same TPMSg3 in $\cT$.

So we have shown that $\cT$ is invariant under the reflections in the vertical
lines $\re\tau = \pm 1$ (tP and tD).  The same argument applies to show that
$\cR$ is invariant under the reflections in $\re\tau = -1$ (H) and $\re\tau =
1/2$ (rPD).

\medskip

Now let us apply the transform $\tau' = 1/\bar\tau$, which exchanges the vertical
lines $\re \tau = \pm 1$ with the half-circles $|\tau \pm 1/2| = 1/2$.  Then
Figure~\ref{fig:tGtau} becomes Figure~\ref{fig:InvtGtau}.

\begin{figure}
	\includegraphics[width=0.5\textwidth]{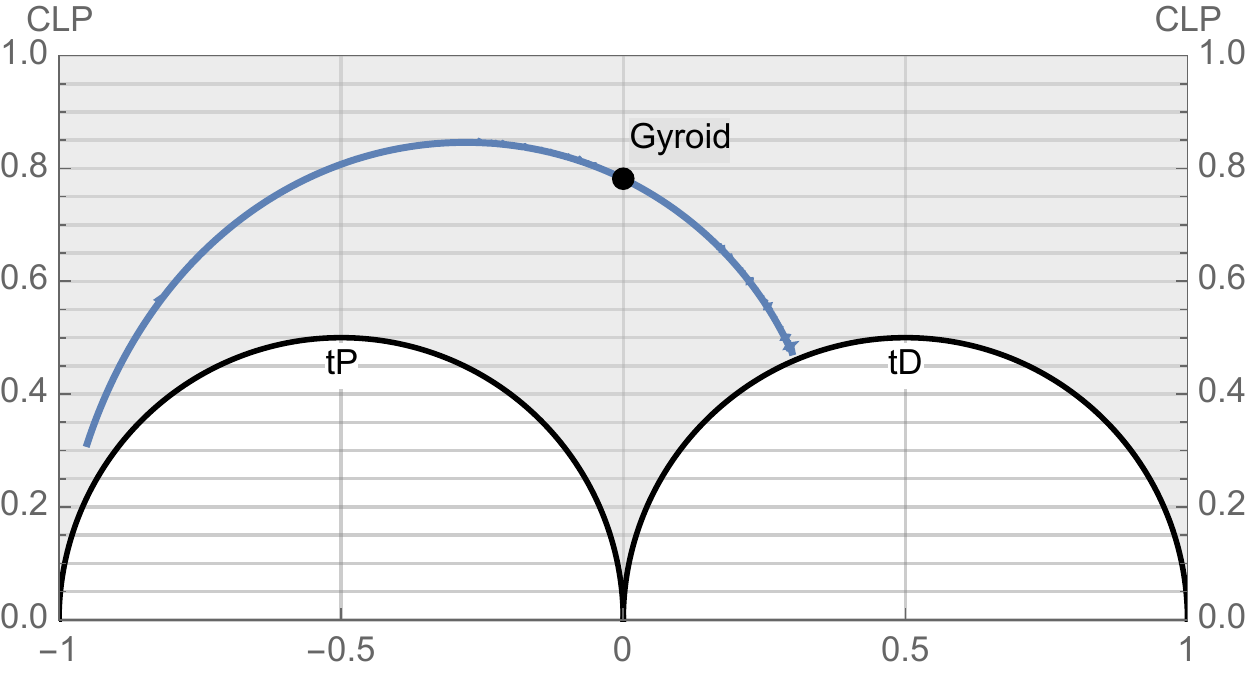}
	\caption{
		Transform of Figure~\ref{fig:tGtau} under the inversion $\tau'=1/\bar\tau$.
	}
	\label{fig:InvtGtau}
\end{figure}

The vertical lines $\re \tau'=\pm 1$ correspond to CLP surfaces.  With such a
$\tau'$ and associate angle $\pi/2$, we know very well that the strip $0 < \im
z < \im \tau'/2$ is mapped by the Weierstrass parameterization to a minimal
strip bounded by two periodic zig-zag polygonal curves related by a vertical
translation.  Each polygonal curve consists of segments of equal length, making
$\pi/2$ angles in alternating directions.

As we increase $\re\tau'$, the straight segments of the polygonal curves become
symmetric curves.  At the same time, the two boundaries begin to drift with
respect to each other.  When $\re\tau' = 0$, the drift is exactly half of a
period, then reflectional symmetry is restored and the segments become straight
again; this is a tD surface.  When $\re\tau' = 1$, the drift is exactly one
period, and a CLP surface is restored.

In principle, we could have built the whole paper on ``drifted strips''
instead of ``twisted catenoids''.  We will not go too far in this direction.
Nevertheless, the alternative view facilitates the following observation: the
transforms $\tau' \mapsto 2-\tau'$ and $\tau' \mapsto -2-\tau'$ only result in
a reflection of the drifted strip.  Consequently, $\cT$ is invariant under the
reflections in $\re\tau' = \pm 1$ (CLP).  Similarly, $\cR$ is invariant under
the reflections in hCLP families ($\re\tau' = \pm 1$).

\medskip

If we parameterize $\cT$ and $\cR$ by $\tau$ in the upper half-plane, which can
be seen as the hyperbolic space, then we have proved that
\begin{proposition}\leavevmode
	\begin{itemize}
		\item $\cT$ is invariant under the discrete group generated by reflections
			in $\re \tau = -1$ (tP), $\re \tau = 1$ (tD) and $|\tau \pm 1/2| = 1/2$
			(CLP).

		\item $\cR$ is invariant under the discrete group generated by reflections
			in $\re \tau = -1$ (H), $\re \tau = 1/2$ (rPD) and $|\tau \pm 1/2| = 1/2$
			(hCLP).
	\end{itemize}
\end{proposition}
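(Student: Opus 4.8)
The plan is to treat the proposition as a consolidation of the reflection invariances already recorded in this section, supplemented by one invocation of Poincar\'e's polygon theorem to get discreteness.

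First I would collect what has been proved. The marked twisted catenoid picks up only a reflection under $\tau\mapsto 2-\bar\tau$ and under $\tau\mapsto -2-\bar\tau$, so $\cT$ is invariant under the reflections in the geodesics $\re\tau=1$ (tD) and $\re\tau=-1$ (tP); likewise $\cR$ is invariant under the reflections in $\re\tau=-1$ (H) and $\re\tau=1/2$ (rPD). Passing to the coordinate $\tau'=1/\bar\tau$, the ``drifted strip'' picture shows that $\cT$ is invariant under the reflections in $\re\tau'=\pm1$ (CLP) and $\cR$ under the reflections in $\re\tau'=\pm1$ (hCLP). Throughout, ``invariant'' is used in the sense of the preceding paragraphs: each of these anti-conformal maps carries the torus parameter of a surface in the family to that of its mirror image, which again lies in the family.

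Second I would rewrite the $\tau'$-reflections in the original coordinate. The involution $\iota\colon\tau\mapsto 1/\bar\tau$ is an anti-conformal involution of $\{\im\tau>0\}$ --- the hyperbolic reflection in the geodesic $|\tau|=1$ --- and, as already observed, it exchanges the line $\re\tau=1$ with the half-circle $|\tau-1/2|=1/2$ and the line $\re\tau=-1$ with $|\tau+1/2|=1/2$. The reflection in $\re\tau'=1$, written in the $\tau$-coordinate, is $\iota\circ(\tau\mapsto 2-\bar\tau)\circ\iota$, which a short computation evaluates to $\tau\mapsto \bar\tau/(2\bar\tau-1)$, i.e. the inversion in the circle $|\tau-1/2|=1/2$; similarly $\re\tau'=-1$ gives the inversion in $|\tau+1/2|=1/2$. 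Hence ``invariance under the $\tau'$-line reflections'' is literally ``invariance under the reflections in $|\tau\pm1/2|=1/2$'', and we obtain: $\cT$ is invariant under the reflections in $\re\tau=\pm1$ and $|\tau\pm1/2|=1/2$, and $\cR$ under the reflections in $\re\tau=-1$, $\re\tau=1/2$ and $|\tau\pm1/2|=1/2$. Since each generator preserves the relevant family, so does the whole group.

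Third I would settle discreteness with Poincar\'e's polygon theorem. For $\cT$ the four mirrors bound $\Omega_t$, a hyperbolic quadrilateral whose four vertices are the cusps $\infty$, $1$, $0$, $-1$, so every interior angle vanishes; Poincar\'e's theorem then yields a discrete reflection group with $\Omega_t$ as fundamental domain. For $\cR$ the four mirrors bound $\Omega_r$, a hyperbolic quadrilateral with cusps at $\infty$, $0$, $-1$ and one finite vertex at $(1+i)/2$, where $|\tau-1/2|=1/2$ meets $\re\tau=1/2$ orthogonally, so the interior angles are $0,0,0,\pi/2$, all submultiples of $\pi$; again Poincar\'e gives a discrete reflection group with $\Omega_r$ as fundamental domain. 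This is exactly the proposition. The one genuinely delicate point has in fact already been handled in the paragraphs preceding the statement, not here: namely, the precise meaning of ``the family is invariant under a transformation of $\tau$'', given that the Weierstrass data pins $\tau$ down only modulo $\Gamma_0(4)$ and the marking convention and that all the maps in play are orientation-reversing. What remains is routine, since the coordinate identities are elementary and the only input to Poincar\'e's theorem is that half-circles meet the real line vertically, so that every vertex of $\Omega_t$ and $\Omega_r$ is a cusp or a right angle.
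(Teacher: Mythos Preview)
Your proposal is correct and follows essentially the same route as the paper: the proposition is stated as a summary of the preceding discussion, and your first two steps faithfully reconstruct that discussion (with your explicit conjugation computation $\iota\circ(\tau\mapsto 2-\bar\tau)\circ\iota = (\tau\mapsto\bar\tau/(2\bar\tau-1))$ making precise what the paper only asserts, namely that $\tau'=1/\bar\tau$ carries the lines $\re\tau'=\pm1$ to the half-circles $|\tau\pm1/2|=1/2$). The only substantive addition is your third step: the paper does not argue discreteness at all, simply asserting ``discrete group'' in the statement, whereas you invoke Poincar\'e's polygon theorem on $\Omega_t$ and $\Omega_r$ with the correct vertex angles (all cusps for $\cT$; three cusps and one right angle at $(1+i)/2$ for $\cR$). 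This is a welcome elaboration rather than a different approach.
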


\subsection{Higher pitch}

We have seen that a tG surface consists of ribbons bounded by helices, and the
pitch of the helices doubles the vertical distance between them.  In general,
for any surface in $\cT$ and $\cR$, the ratio of the pitch of the helices to
the vertical distance between them must be an even integer, say $2p$; we call
$p$ the \emph{pitch} of the surface.  The vertical associate angle for a
surface of pitch $p$ is $\theta_v(\tau;p) = \arg(\tau + 1 - 2/p)-\pi/2$ for
$\cT$, and $\theta_v(\tau;p) = \arg(\tau + 1 - 3/2p)-\pi/2$ for $\cR$.  The
period condition is again $\theta_h(\tau) = \theta_v(\tau;p)$.

It is immediate that $p=0$ for tP.  We have seen that tD and tG close the
period with $p=1$.  It is also not difficult to see that CLP closes the period
with $p=2$.  In this case, a catenoid and the one directly above it opens into
two ribbons that occupy the same vertical cylinder.  They interlace each other,
and the vertical gap between them equals the vertical width of each single
ribbon, allowing adjacent ribbons to fit exactly in.

More generally, for a surface of pitch $p$, $p$ ribbons occupy the same
vertical cylinder so that adjacent ribbons fit exactly in.  When $p \equiv 0
\pmod 2$, one easily verifies that the half-circle $|\tau + 1 - 1/p| = 1/p$
closes the period for $\cT$ surfaces with pitch $p$.  By the reflection group,
we see that this half-circle corresponds to tP if $p \equiv 0 \pmod 4$ and to
CLP if $p \equiv 2 \pmod 4$.  We now prove that

\begin{lemma}
	Let
	\[
		\tau' = -\frac{(2k-1)\bar\tau + (2k-2)}{2k\bar\tau + (2k-1)}
	\]
	be the image of $\tau$ under the reflection in the half-circle $|\tau+1-1/2k|
	= 1/2k$.  If $\tau$ closes the period (for $\cT$ or $\cR$) with pitch $p$,
	then $\tau'$ closes the period with pitch $q=4k-p$.
\end{lemma}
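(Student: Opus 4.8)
The plan is to split the statement into a purely arithmetic identity for the Möbius map underlying the reflection, and a transformation rule for the horizontal angle $\theta_h$, and then match the two. Write $R(\tau)=\gamma(\bar\tau)$, where $\gamma(w)=-\dfrac{(2k-1)w+(2k-2)}{2kw+(2k-1)}$ is the Möbius transformation of determinant $-1$ whose fixed real points are $-1$ and $1/k-1$; thus $R$ is exactly the reflection in $|\tau+1-1/(2k)|=1/(2k)$, and it is one of the mirrors of the reflection group of the previous proposition (for $k=1$ it is the half-circle $|\tau+1/2|=1/2$, i.e.\ CLP for $\cT$ resp.\ hCLP for $\cR$). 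In particular that proposition already guarantees that $R$ carries period solutions to period solutions with \emph{some} pitch, so the real task is to identify that pitch.

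First I would record that $\gamma$ maps the real number $c_p$ occurring in $\theta_v(\tau;p)=\arg(\tau-c_p)-\pi/2$ (namely $c_p=2/p-1$ for $\cT$, and the analogous value for $\cR$) to $c_q$ with $q=4k-p$; this is a direct computation. Since $\det\gamma=-1$ one then gets the clean identity
\[
	R(\tau)-c_q \;=\; \gamma(\bar\tau)-\gamma(c_p)\;=\;\frac{-\,\overline{(\tau-c_p)}}{\,\overline{(2k\tau+2k-1)}\;(2kc_p+2k-1)\,},
\]
and, since $2kc_p+2k-1>0$ for every pitch with $0\le p<4k$, taking arguments yields
\[
	\theta_v\bigl(R(\tau);\,4k-p\bigr)\;=\;-\,\theta_v(\tau;p)+\arg(2k\tau+2k-1).
\]

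The main step is to prove the matching rule for the horizontal angle, namely $\theta_h\bigl(R(\tau)\bigr)=-\,\theta_h(\tau)+\arg(2k\tau+2k-1)$, equivalently a transformation formula $\psi(R(\tau))=\lambda\,(2k\tau+2k-1)\,\overline{\psi(\tau)}$ with $\lambda$ a positive real constant. There are two natural routes. The analytic route factors $R$ as complex conjugation followed by the substitution $\gamma$: conjugation carries $\psi$ to $\overline{\psi}$ once one checks that the path $[0,(1+\tau)/2]$ and the chosen branch cuts are conjugation-compatible, while $\gamma$ contributes the automorphy factor $2k\tau+2k-1$ coming from $dz$ under the induced change of basis of the torus; the delicate point is that the matrix of $\gamma$ lies in $\Gamma(2)$ but in $\Gamma_0(4)$ only when $k$ is even, so for odd $k$ the modulus $m=\lambda(2\tau)$ and the function $\sn(\,\cdot\,;\tau)$ are genuinely transformed and the effect must be followed through the integral defining $\psi$. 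The geometric route instead reuses the ambient reflection of the twisted catenoid (equivalently, the drifted strip) that already underlies the invariance statement of the previous proposition: this reflection preserves vertical alignment of the images of $0$ and $(1+\tau)/2$, which is precisely the content of the displayed relation for $\theta_h$, and the absence of an additive constant is confirmed by specializing to the mirror half-circle $|\tau+1-1/(2k)|=1/(2k)$ itself, where $R(\tau)=\tau$, the surface is a classical tP or CLP surface closing the period with pitch $2k$, and both sides reduce to the verifiable identity $2\theta_h(\tau)=\arg(2k\tau+2k-1)$.

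Combining the two rules on the pitch-$p$ solution set gives $\theta_h(R(\tau))=\theta_v(R(\tau);4k-p)$, i.e.\ $R(\tau)$ closes the period with pitch $4k-p$; the same argument applies verbatim to $\cR$ with $c_p$, $\gamma$ and the classical family (H, rPD, or hCLP) adjusted accordingly. As a cross-check one can localize the whole argument near the common endpoint $\tau=-1$, where $\theta_v(\tau;p)\approx\pi/2-(p/2)\im\tau$, where $R$ acts to first order as the reflection in $\re\tau=-1$ (so $\im R(\tau)\approx\im\tau$), and where the leading asymptotics of $\theta_h$ from the Appendix force the $R$-image of the pitch-$p$ solution curve to leave $-1$ exactly like the pitch-$(4k-p)$ curve; since the pitch is locally constant along a solution curve, this pins it down globally. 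I expect the main obstacle to be the transformation rule for $\theta_h$ — fixing the sign and ruling out an additive constant — and within it the bookkeeping for odd $k$, where $R$ moves the modulus off the $\Gamma_0(4)$-orbit so that the twisted catenoid of $R(\tau)$ is built from a Jacobi-type elliptic function different from that of $\tau$.
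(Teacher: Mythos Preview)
Your proposal is correct and follows essentially the same route as the paper. Both arguments split the statement into two matching additive identities,
\[
\theta_v(\tau;p)+\theta_v(\tau';4k-p)=\arg(2k\tau+2k-1)
\quad\text{and}\quad
\theta_h(\tau)+\theta_h(\tau')=\arg(2k\tau+2k-1),
\]
the first coming from the fact that the reflection carries $c_p$ to $c_{4k-p}$, the second from a transformation rule for $\psi$, and then subtract. The paper is much terser on the second identity: it simply records $\tau'+1=(\bar\tau+1)/(2k\bar\tau+2k-1)$ and asserts that ``a change of variable in the integration $\psi(\tau)$'' gives the result, without discussing branch choices, the possible additive constant, or the $\Gamma_0(4)$ issue you raise for odd $k$; your geometric route via the ambient reflection of the catenoid and your asymptotic cross-check near $\tau=-1$ are extra safeguards not present in the paper. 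One small overreach: the half-circle $|\tau+1-1/(2k)|=1/(2k)$ is a mirror of the reflection group only for $k\ge 1$, obtained by iterated reflection of $\re\tau=-1$ through $|\tau+1/2|=1/2$, but it is not literally ``one of the mirrors of the previous proposition'' for $k\ge 2$; this does not affect the argument, since you do not actually use that proposition as a black box.
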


\begin{proof}
	Note that $2/q-1$ and $2/p-1$ are related by the reflection in the named
	half-circle.  It then follows from elementary geometry that $\theta_v(\tau;p)
	+ \theta_v(\tau';q) = \arg(\tau + 1 - 1/2k)$.  On the other hand, note that
	\[
		\tau'+1 = \frac{\bar\tau + 1}{2k\bar\tau + (2k-1)}.
	\]
	Then a change of variable in the integration $\psi(\tau)$ shows that
	$\theta_h(\tau) + \theta_h(\tau') = -\arg(2k\bar\tau + (2k-1)) = \arg(\tau +
	1 - 1/2k)$.  So $\theta_h(\tau) = \theta_v(\tau;p)$ implies that
	$\theta_h(\tau') = \theta_v(\tau';q)$.
\end{proof}

In particular, by reflections in CLP (resp.\ hCLP), we see that tG and tD
(resp.\ rGL and rPD) close the period for each $p \equiv 1 \pmod 2$.

\subsection{Uniqueness}

We conjecture the following uniqueness statements.
\begin{conjecture}\leavevmode
	\begin{itemize}
		\item	For every $r \in (-1,1)$, there is a unique $\tau$ with $\re\tau = r$
			that solves the period condition~\eqref{eq:period} for $\cT$ with pitch
			$1$.

		\item	For every $r \in (-1,1/2)$, there is a unique $\tau$ with $\re\tau =
			r$ that solves the period condition~\eqref{eq:period} for $\cR$ with
			pitch $1$.
	\end{itemize}
\end{conjecture}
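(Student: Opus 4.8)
\emph{A proof proposal.} The plan is to fix $r=\re\tau$ and to study, as a function of $t=\im\tau$, the difference $f(t):=\theta_h(r+it)-\theta_v(r+it)$ on the interval $I_r:=\{\,t>0\mid r+it\in\Omega_t\,\}$ (resp.\ $\Omega_r$ for $\cR$); by \eqref{eq:period} the conjecture asks that $f$ vanish at exactly one point of $I_r$. The existence proof in Section~\ref{sec:proof} already records the signs of $f$ at the ends of $I_r$: at the lower end $f<0$ (this is precisely the estimate on the half-circles $|\tau\pm1/2|=1/2$ carried out there; for $r=0$ one uses instead the rectangular case settled by Weyhaupt~\cite{weyhaupt2006,weyhaupt2008}), while $f\to(1-r)\pi/4>0$ (resp.\ $(1/2-r)\pi/3>0$) as $t\to\infty$ by Lemma~\ref{lem:tauinf}. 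Hence it suffices to prove that $f$ is \emph{strictly increasing} on $I_r$, after which the unique zero is supplied by the intermediate value theorem.

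The vertical angle contributes nothing dangerous: since $\theta_v(\tau)=\arg(\tau-1)-\pi/2$ for $\cT$ (resp.\ $\arg(\tau-1/2)-\pi/2$ for $\cR$) and $\re\tau<1$ (resp.\ $<1/2$) throughout $\Omega_t$ (resp.\ $\Omega_r$), one computes directly
\[
	\frac{\partial\theta_v}{\partial t}=\re\frac{1}{\tau-1}=\frac{r-1}{(r-1)^2+t^2}<0
\]
(and the same with $1/2$ in place of $1$ for $\cR$), so $\theta_v$ strictly decreases along every vertical line. The whole problem therefore reduces to the single inequality $\partial_t\theta_h\ge0$ on $\Omega_t$ (resp.\ $\Omega_r$): that $\theta_h$ be \emph{non-decreasing} along vertical lines.

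To attack this I would exploit that $\psi$ is a holomorphic, non-vanishing function of $\tau$ on $\Omega_t$ (resp.\ $\Omega_r$) --- non-vanishing because $\theta_h=\arg\psi$ is defined, and holomorphic because either representation $\psi(\tau)=\int_0^{(1+\tau)/2}G\,dz=\int_0^{(1+\tau)/2}dz/G$ from Section~\ref{sec:period} extends holomorphically in $\tau$ (the second form is more convenient, its integrand vanishing like a square root at the moving branch point $(1+\tau)/2$). Then $\psi'/\psi=(\log\psi)'$ is holomorphic on $\Omega_t$, so
\[
	\frac{\partial\theta_h}{\partial t}=\frac{\partial}{\partial t}\im\log\psi(r+it)=\re\frac{\psi'(\tau)}{\psi(\tau)}
\]
is a \emph{harmonic} function of $\tau$. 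By the minimum principle it then suffices to verify $\partial_t\theta_h\ge0$ on the topological boundary of $\Omega_t$, together with adequate control at the boundary points $\tau=\pm1$ and the finite tD-intersection (resp.\ $\tau=-1,\ 1/2$ and the finite rPD-intersection) and at the cusp $\im\tau=\infty$. This looks feasible because $\partial\Omega_t$ is assembled entirely from classical families whose flat structures are completely explicit: along the walls $\re\tau=\pm1$ (resp.\ $\re\tau=-1$ and $\re\tau=1/2$) the angle $\theta_h$ is constant ($\pi/2$ on tP/H, $0$ on tD/rPD), hence $\partial_t\theta_h\equiv0$ there, and on the CLP (resp.\ hCLP) half-circles and in the cusp one evaluates $\partial_t\theta_h$ from the explicit pictures (Figures~\ref{fig:CLP} and~\ref{fig:hCLP}) and the degeneration asymptotics of the Appendix (Lemmas~\ref{lem:tauinf} and~\ref{lem:tau0}).

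The hard part will be exactly this boundary analysis. First I expect to need a derivative-in-$t$ computation of $\theta_h$ along the CLP and hCLP half-circles; there the relevant quantity is an extremal length of the flat annulus, so a monotonicity argument modelled on the one used for the ratio $a/b$ in Section~\ref{sec:proof} should do the job. Second, one must make the minimum principle rigorous near $\im\tau=\infty$ and near the corners, where $\psi'/\psi$ may blow up, which calls for barrier or removable-singularity estimates drawn from the Appendix. Finally, should the clean global bound $\partial_t\theta_h\ge0$ fail somewhere inside $\Omega_t$, the argument still survives with the weaker assertion that $\partial_t f>0$ \emph{at every zero of $f$}, which one could hope to obtain by a localized argument combining the two flat structures $\Phi_1$ and $\Phi_2$; but the global harmonic bound is the cleaner target and I would pursue it first.
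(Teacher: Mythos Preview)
This statement is labelled a \emph{Conjecture} in the paper and is \emph{not} proved there in general. What the paper does offer is a short sketch valid only at the special abscissae $\re\tau\in\{0,\pm 1/2\}$ for $\cT$ and $\re\tau\in\{-1/2,0\}$ for $\cR$: at those values the Weierstrass data regains a reflectional symmetry, the bounding polygons of the flat annulus are straight, and one can see directly from the flat picture that increasing $\im\tau$ enlarges the outer polygon and shrinks the inner one, forcing $\theta_h$ to increase. The paper is explicit that this flat-structure argument is tied to the extra symmetry and may fail for generic $r$.

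Your proposal is therefore not a re-derivation of something in the paper but a genuine attempt to close an open problem. The reduction you make is the same as the paper's sketch---$\theta_v$ strictly decreasing in $t$, so it suffices that $\theta_h$ be nondecreasing---but your mechanism for the second monotonicity is new: the observation that $\partial_t\theta_h=\re(\psi'/\psi)$ is harmonic in $\tau$, so a minimum principle on $\Omega_t$ (resp.\ $\Omega_r$) would propagate a boundary sign to the interior. This is a nice idea and would, if it goes through, upgrade the paper's special-case argument to all $r$ at once.

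The substantive gap is the boundary step on the CLP and hCLP half-circles. Knowing $\theta_h$ along a circle gives you only the \emph{tangential} derivative of $\im\log\psi$; the quantity $\partial_t\theta_h$ mixes tangential and normal components, and the normal part is (by Cauchy--Riemann) the tangential derivative of $\log|\psi|$, which you do not yet control. So ``a monotonicity argument modelled on the ratio $a/b$'' is not enough as stated: that argument tracks $\theta_h$ along the circle, not across it. A more promising route is to use the reflection symmetry of Section~\ref{sec:discuss}: the paper shows that reflection in the CLP (resp.\ hCLP) circle sends $\cT$ (resp.\ $\cR$) to itself and records the transformation law $\theta_h(\tau)+\theta_h(\tau')=\arg(\tau+1-1/2k)$; an analogous functional equation for $\psi$ itself would let you apply Schwarz reflection and read off the normal derivative on the circle. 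For $\cT$ you might alternatively work directly with the closed form~\eqref{eq:explicitT} for $\psi$. You should also justify that $\psi$ is nowhere zero on the domain (for $\cT$ this follows from~\eqref{eq:explicitT}; for $\cR$ it needs a separate argument) and handle the cusps $\tau\to -1,0,1$ (resp.\ $-1,0,1/2$) and $\im\tau\to\infty$, where $\psi'/\psi$ is unbounded; the asymptotics of the Appendix suggest $\partial_t\theta_h\to 0$ at the cusp at infinity, but the finite cusps require care.

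In short: the paper has no proof to compare against; your harmonic-function strategy is a reasonable plan of attack that goes beyond the paper, but the CLP/hCLP boundary estimate is a real missing step, not a formality.
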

The uniqueness has been proved by Weyhaupt~\cite{weyhaupt2006} for $\cT$ with
$\Re\tau=0$ (gyroid) and for $\cR$ with $\Re\tau=-1/2$ (gyroid) and $0$
(Lidinoid).  This shows that
\begin{theorem}
	The gyroid and the Lidinoid are the only non-trivial embedded TPMSg3s in the
	associate families of tP, tD, rPD and H surfaces.
\end{theorem}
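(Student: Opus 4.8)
The plan is to turn membership in one of these associate families into a constraint on the torus parameter~$\tau$, apply the period equation~\eqref{eq:period}, and reduce to the three lines on which Weyhaupt has already proved uniqueness. So let $M$ be an embedded TPMSg3 lying in the associate family of a tP, tD, rPD, or H surface. By the observation recalled in Section~\ref{sec:symmetry} (from~\cite[Lemma~4]{kgb1996}), travelling along the associate family keeps the order-$2$ rotations about horizontal axes and reduces the vertical rotation to a vertical screw symmetry; its order is $4$ if $M$ sits in the associate family of a tP or tD surface and $3$ if in that of an rPD or H surface, so $M\in\cT$ or $M\in\cR$ accordingly. Since the associate family changes neither the quotient torus $(M/\Lambda)/S$ nor the Gauss map, these agree with those of the underlying classical surface; hence, by Lemma~\ref{lem:G} and Table~\ref{tbl:weierstrass}, after normalizing by our convention $\tau$ lies on one of the lines $\re\tau=-1$ (tP, H), $\re\tau=1$ (tD), $\re\tau=1/2$ (rPD), up to the translations $\tau\mapsto\tau\pm1$ that fix the Weierstrass data. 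Using the reflections in $\re\tau=\pm1$ (for $\cT$) and in $\re\tau=-1$ and $\re\tau=1/2$ (for $\cR$) from the reflection group above, I may therefore assume $\re\tau\in\{-1,0,1\}$ when $M\in\cT$ and $\re\tau\in\{-1,-1/2,0,1/2\}$ when $M\in\cR$.

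On the classical lines the period equation settles things at once. If, say, $M\in\cT$ with $\re\tau=-1$, then $\theta_h\equiv\pi/2$ along this line (as computed in Section~\ref{sec:proof}), so the period condition $\theta_h(\tau)=\theta_v(\tau;p)$ with $\theta_v(\tau;p)=\arg(\tau+1-2/p)-\pi/2$ forces $\arg(\tau+1-2/p)=\pi$, which is impossible on $\re\tau=-1$ unless the pitch is $p=0$; hence $M$ is a tP surface. The same computation yields a tD surface when $\re\tau=1$, an H surface when $M\in\cR$ with $\re\tau=-1$, and an rPD surface when $\re\tau=1/2$. All of these are excluded as trivial.

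There remain $\re\tau=0$ for $\cT$ and $\re\tau=0,\,-1/2$ for $\cR$. On each of these lines I would first argue that the pitch must be $p=1$: the value of $\theta_h$ equals $\pi/2$ at one of the two bounding classical lines and $0$ at the other, its limit as $\im\tau\to\infty$ is given by Lemma~\ref{lem:tauinf}, and comparing with the possible values of $\theta_v(\tau;p)$ leaves $p=1$ as the only option consistent with embeddedness (the even-pitch solutions along these lines, where they occur, representing non-embedded CLP or hCLP surfaces). Weyhaupt's uniqueness results for these three lines (\cite[Lemma~4.4]{weyhaupt2008} and~\cite{weyhaupt2006}) then identify the unique pitch-$1$ solution on each: the gyroid on $\re\tau=0$ in $\cT$, the Lidinoid on $\re\tau=0$ in $\cR$, and the gyroid on $\re\tau=-1/2$ in $\cR$. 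Hence $M$ is the gyroid or the Lidinoid, which completes the proof.

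The step I expect to be the real obstacle is the pitch reduction on these three non-classical lines: ruling out $p\ge2$ (and $p=0$) amounts to controlling $\arg\psi(\tau)$ along an entire vertical segment, whereas $\psi$ is precisely the period integral that cannot be evaluated explicitly on a non-rectangular torus. I would attempt this either by establishing monotonicity of $\theta_h$ in $\re\tau$, or by transporting the higher-pitch solution loci onto the classical lines and the CLP/hCLP half-circles using the reflection group and the higher-pitch lemma of the preceding subsection, and checking that they miss the segments in question; in any case the asymptotics of the Appendix control the two ends of each segment.
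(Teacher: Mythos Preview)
Your overall architecture is the paper's: an embedded TPMSg3 in one of these associate families keeps the same quotient torus, hence under the convention its $\tau$ differs from the classical value by an integer; modulo the reflection group this lands on one of finitely many vertical lines, and Weyhaupt's uniqueness on the gyroid/Lidinoid lines finishes the argument. The paper compresses all of this into one sentence (``by construction, lie on the same vertical lines, hence contradict the uniqueness'').

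Where you diverge is in introducing the pitch parameter $p$ and then trying to reduce to $p=1$ before invoking Weyhaupt. This is unnecessary and is the source of the difficulty you flag. The uniqueness statement being cited is not the pitch-$1$ version from the Conjecture; it is the stronger statement, already used in Section~\ref{sec:proof}, that the gyroid (resp.\ Lidinoid) is the \emph{only} embedded TPMSg3 on its vertical line---see the sentence ``the proof of~\cite[Lemma~4.4]{weyhaupt2008} implies that the gyroid is the only embedded TPMSg3 on the vertical line $\re\tau=0$ that solves the period condition''. Weyhaupt works directly with the rectangular torus and varies the associate angle over the whole circle, so his argument already sweeps through what you are calling all pitches; translating his setting into the paper's $\tau$-convention is exactly what places the surface on $\re\tau=0$ (for $\cT$) or $\re\tau\in\{-1/2,0\}$ (for $\cR$). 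Once you see that, your ``real obstacle'' evaporates: there is no separate pitch reduction to perform.

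A minor point: your treatment of the classical lines via $\theta_v(\tau;p)=\arg(\tau+1-2/p)-\pi/2$ breaks down at $p=0$ (the formula is singular there), and in any case this step is not needed---on $\re\tau=\pm1$ (resp.\ $-1$, $1/2$) the surface is by definition the trivial tP/tD (resp.\ H, rPD) member of the associate family, so ``non-trivial'' already excludes those lines without any period computation.
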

\begin{proof}
	Any other embedded TPMSg3 must, by construction, lie on the same vertical
	lines, hence contradict the uniqueness.
\end{proof}

Our approach leads to a simple proof that works not only for the gyroid and the
Lidinoid but also for the tG surfaces with $\re\tau=\pm 1/2$.  Here is a
sketch: On the one hand, it is immediate that $\theta_v$ decreases
monotonically with $\im\tau$.  On the other hand, by enlarging the outer square
or triangle, and shrinking the inner square or triangle, one easily sees from
the flat structure that $\theta_h$ increases monotonically with $\im\tau$.
This simple proof is possible because, for these cases, the twist angles take
special values and do not vary with $\im\tau$.  This may not hold for other
cases, for which even the meaning of ``twist angle'' is not clear.

\medskip

We also conjecture the following classification statement.

\begin{conjecture}\leavevmode
	\begin{itemize}
		\item	The tP, tD, CLP and tG surfaces are the only members of $\cT$.
		\item The H, rPD and rGL surfaces are the only members of $\cR$.
	\end{itemize}
\end{conjecture}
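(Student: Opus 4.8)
\emph{Setup and reduction.} The plan is to analyse the full period set
\[
	\mathcal V = \bigl\{ (\tau,p) : \tau\in\mathbb{H},\ p\in\Z_{\ge 0},\ \theta_h(\tau)=\theta_v(\tau;p) \bigr\},
\]
where $\theta_v(\tau;p)=\arg(\tau+1-2/p)-\pi/2$ for $\cT$ and $\theta_v(\tau;0)=\pi/2$; by Lemma~\ref{lem:G} and Section~\ref{sec:period}, every embedded member of $\cT$ is represented, together with its pitch $p$, by a point of $\mathcal V$. First I would record that $\mathcal V$ is invariant under the reflection group $\Gamma_\cT$ of Section~\ref{sec:discuss} — reflections in $\re\tau=\pm1$ fixing $p$, reflections in $|\tau\pm1/2|=1/2$ sending $p\mapsto4-p$ (the higher-pitch Lemma) — and that $\Gamma_\cT$-equivalent points give the same surface. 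Since $\overline{\Omega_t}$ is a fundamental domain for $\Gamma_\cT$ (Poincar\'e's theorem applies: every vertex angle is $0=\pi/\infty$), it suffices to show that $\mathcal V\cap(\overline{\Omega_t}\times\Z_{\ge0})$ consists exactly of $\{\re\tau=-1,\ p=0\}$ (tP), $\{\re\tau=1,\ p=1\}$ together with $\{\mathrm{tG},\ p=1\}$ (tD and tG), and $\{|\tau\pm1/2|=1/2,\ p=2\}$ (CLP).

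\emph{The harmonic trick, and all pitches except $p=1$.} The idea I would exploit is that
\[
	\theta_h(\tau)-\theta_v(\tau;p)=\arg\frac{i\,\psi(\tau)}{\tau+1-2/p}
\]
is \emph{harmonic} on $\Omega_t$, because $\psi$ is zero-free there (implicit already in the definition of $\theta_h$) and $\tau+1-2/p$ is a nonvanishing holomorphic function on $\Omega_t$, its only zero $2/p-1$ being real. The boundary values of $\theta_h$ are known from the existence proof ($\pi/2$ on the tP edge, $0$ on the tD edge, $\arg\tau-\pi/2$ on both CLP half-circles, and $\to(1-\re\tau)\pi/4$ at the cusp $\infty$ by Lemma~\ref{lem:tauinf}), so elementary geometry pins down the sign of $\theta_h-\theta_v(\cdot;p)$ on $\partial\Omega_t$: for $p=0$ it is $\le 0$, vanishing only on $\re\tau=-1$; for $p=2$ it is $\ge 0$, vanishing only on the two CLP half-circles; and for every $p\ge 3$ it is $\ge 0$, vanishing only at the corner $\tau=-1$. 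The maximum, respectively minimum, principle then forbids further interior solutions, settling every pitch $p\ne1$.

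\emph{The case $p=1$ — the main obstacle.} For $p=1$ the harmonic function $\theta_h-\theta_v(\cdot;1)$ is positive on the tP edge, negative on both CLP half-circles, and identically zero on the tD edge, so the maximum principle cannot apply — correctly so, since tG is an interior nodal arc. What is left is precisely the first Conjecture: the nodal set in $\overline{\Omega_t}$ is $\{\re\tau=1\}\cup\mathrm{tG}$, equivalently each line $\re\tau=r$, $r\in(-1,1)$, carries a unique solution. As the nodal set of a nonconstant harmonic function is a union of analytic arcs ending only on the boundary or at critical points, this reduces to showing that $\theta_h-\theta_v(\cdot;1)$ has no critical point inside $\Omega_t$ and exactly one along the tD edge. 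This is the hard part. The flat-structure monotonicity of the Uniqueness subsection — enlarge the outer twisted square and shrink the inner one so that $\theta_h$ increases and $\theta_v$ decreases in $\im\tau$ — proves it at the symmetric fibers $\re\tau\in\{0,\pm1/2\}$, where the twist angle is a fixed multiple of $\pi/4$; but on a general vertical line the twist angle varies with $\im\tau$, the flat structures $\Phi_1,\Phi_2$ are not explicit, and I do not see a uniform bound on $\partial\theta_h/\partial\im\tau$ — equivalently on $\psi'(\tau)/\psi(\tau)-1/(\tau-1)$ — strong enough either to establish the monotonicity directly or to run an argument-principle count of critical points around $\partial\Omega_t$.

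\emph{The family $\cR$.} The scheme transfers verbatim to $\cR$, with $\Omega_r$, the reflection group generated by reflections in $\re\tau=-1$ (H), $\re\tau=1/2$ (rPD) and $|\tau\pm1/2|=1/2$ (hCLP), and $\theta_v(\tau;p)=\arg(\tau+1-3/(2p))-\pi/2$; the harmonic argument again disposes of all pitches but $p=1$, for which the flat-structure monotonicity handles $\re\tau\in\{-1/2,0\}$ (gyroid, Lidinoid) and the general fiber presents the same obstacle. The one genuinely new point is that the $p=2$ stratum of $\mathcal V$ is the hCLP family, which is not embedded, so it contributes no member of $\cR$; hence the classification reads: the members of $\cR$ are exactly the H, rPD and rGL surfaces. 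Granting the pitch-$1$ uniqueness in both cases, pushing the two classifications in $\overline{\Omega_t}$ and $\overline{\Omega_r}$ around by the respective reflection groups yields the Conjecture; the sole hard input is the monotonicity of $\theta_h$ along non-symmetric vertical lines.
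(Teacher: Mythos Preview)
The statement you are attempting is labeled a \emph{Conjecture} in the paper and is not proved there. Immediately after stating it, the paper only remarks that a proof would require first settling the preceding uniqueness conjecture (a unique pitch-$1$ solution on each vertical line) and then separately excluding new surfaces of pitch $0$ or $2$; no argument for either step is supplied. So there is no ``paper's own proof'' to compare against, and your proposal is necessarily an attempt at an open problem.

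Your strategy does go further than the paper. The observation that
\[
\theta_h(\tau)-\theta_v(\tau;p)=\arg\frac{i\,\psi(\tau)}{\tau+1-2/p}
\]
is harmonic on $\Omega_t$, so that a maximum/minimum principle could dispose of every pitch $p\ne1$ at once, is a genuine idea not in the paper. Before that part stands, however, several points need work. First, harmonicity requires $\psi(\tau)\ne0$ on all of $\Omega_t$; you call this ``implicit in the definition of $\theta_h$'', but nothing in the paper establishes it, and it is not obvious. Second, $\Omega_t$ is an ideal hyperbolic quadrilateral with four cusps, so a bare maximum principle is insufficient: you need Phragm\'en--Lindel\"of control at each cusp, hence asymptotics of $\psi$ near $\tau\to0$ and $\tau\to-1$ that the paper does not provide (only $\tau\to\infty$ and $\tau\to1$ are treated). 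Third, the reflection bookkeeping is incomplete: the higher-pitch Lemma in the paper covers only the half-circles $|\tau+1-1/2k|=1/2k$, so your rule $p\mapsto4-p$ under reflection in $|\tau-1/2|=1/2$, and the claim that reflections in $\re\tau=\pm1$ fix $p$, are not directly available and must be argued separately.

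For $p=1$ you correctly note that the nodal set of the harmonic function \emph{is} the question, and you honestly say you cannot establish the required monotonicity of $\theta_h$ along non-symmetric vertical lines. That is exactly the content of the paper's preceding (open) uniqueness Conjecture. Your proposal is therefore, at best, a conditional outline: even granting the technical repairs above, it rests on an input the paper itself leaves open --- which is precisely the status the paper assigns to the classification Conjecture.
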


For a proof, we need to prove the previous conjecture first, then also exclude
the existence of new surface with pitch $0$ or $2$.

Moreover, note that the order-2 rotations around horizontal axes are only used
to determine the L\'opez--Ros factor.  We wonder if this is necessary and
conjecture that

\begin{conjecture}\leavevmode
	\begin{itemize}
		\item	The tP, tD, CLP and tG surfaces are the only TPMSg3s with an order-4 screw symmetry.
		\item The H, rPD and rGL surfaces are the only TPMSg3s with an order-3 screw symmetry.
	\end{itemize}
\end{conjecture}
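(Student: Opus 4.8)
The plan is to deduce the statement from the classification conjecture above, the new point being that, for an \emph{embedded} TPMSg3, the order-$2$ rotations about horizontal axes are not an extra hypothesis: they are forced once a screw symmetry of order $4$ (resp.\ $3$) is present. Consider the order-$4$ case: let $M$ be an embedded TPMSg3 with an order-$4$ screw symmetry $S$. Applying the Proposition that the quotient of $M/\Lambda$ by a screw symmetry has genus one to $S^2$, together with Lemmas~\ref{lem:branchT} and~\ref{lem:dh}, gives a parameterization on a branched double cover of $\C/\langle 1,\tau\rangle$ with branch points at the $2$-division points, $dh=e^{-i\theta}dz$, and $G^2=\rho\,\sn(4Kz;\tau)$; the only difference from Lemma~\ref{lem:G} is that the L\'opez--Ros factor $\rho$, equivalently $\mu:=|\rho|$, is a priori free, because its determination in Section~\ref{sec:weierstrass} used precisely the horizontal rotations. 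So $M$ is described by $(\tau,\mu,\theta)$ together with a discrete pitch $p$. As in Section~\ref{sec:period}, embeddedness forces Meeks' identification of the hyperelliptic points with lattice and half-lattice points, so the poles and zeros of $G$ lie on vertical lines arranged in a square lattice, alternating and equally spaced on each line; equal spacing fixes the heights, giving $\theta=\theta_v(\tau;p)$, and I would argue that requiring the vertical lines themselves to form a square lattice pins down the relative scale of the two flat structures $\Phi_1$ and $\Phi_2$, which by the identity $\sn(z+iK';\tau)=1/(\sqrt m\,\sn(z;\tau))$ used before Lemma~\ref{lem:G} is exactly $\mu=m^{1/4}$. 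Once $\mu=m^{1/4}$, $G^2$ and $1/G^2$ have equal residues at their poles, the horizontal order-$2$ rotations reappear, and $M\in\cT$; the order-$3$ case reduces to $\cR$ in the same way, using Lemma~\ref{lem:branchR}.

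It then remains to prove the classification conjecture itself, and I would follow the route indicated in the discussion. First comes the uniqueness conjecture: for each $r$ in the relevant range the pitch-$1$ period equation~\eqref{eq:period} should have a unique solution on the line $\re\tau=r$. Since $\theta_v(\tau)=\arg(\tau-1)-\pi/2$ (for $\cT$) visibly decreases in $\im\tau$, the point is to show that $\theta_h(\tau)=\arg\psi(\tau)$, with $\psi(\tau)=\int_0^{(1+\tau)/2}G\,dz$, increases in $\im\tau$. For $r\in\{0,\pm\tfrac12\}$ the twist angle does not vary with $\im\tau$ and the flat-structure monotonicity sketched in the discussion (enlarge the outer square or triangle, shrink the inner one) applies; for general $r$ I would try to upgrade this to a monotonicity estimate for the cone metric $|G\,dh|^2$ via an extremal-length comparison of how the ribbon changes as $\im\tau$ grows. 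One must then exclude embedded members of pitch $\neq 1$: pitches $0$ and $2$, where only tP and CLP should occur (on the lines $\re\tau=\pm 1$ and the half-circles $|\tau\pm\tfrac12|=\tfrac12$, controlled with the reflection group of Section~\ref{sec:discuss}), and, I would add, pitches $p\ge 3$, which I expect to be non-embedded because $p\ge 3$ ribbons sharing a single vertical cylinder must intersect.

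I expect the main obstacle to be the monotonicity of $\theta_h$ in $\im\tau$ on a \emph{non-rectangular} torus: away from $r\in\{0,\pm\tfrac12\}$ there is no reflectional symmetry to exploit, the ``twist angle'' is not even well-defined, and the flat structures $\Phi_1,\Phi_2$ carry no straight geodesic boundary, so the Schwarz--Christoffel bookkeeping used in Section~\ref{sec:proof} is unavailable. A secondary difficulty is the first step: one must rule out that embeddedness is merely \emph{compatible} with a one-parameter family of values of $\mu$ rather than forcing $\mu=m^{1/4}$, which requires a careful accounting of all horizontal period conditions on the branched double (resp.\ triple) cover; it may instead be cleaner to forbid $\mu\neq m^{1/4}$ by a direct embeddedness argument on the twisted catenoid itself.
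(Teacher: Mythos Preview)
The statement you address is stated in the paper as a \emph{Conjecture}; the paper offers no proof. The only guidance the paper gives is the remark that the horizontal order-$2$ rotations were used solely to fix the L\'opez--Ros factor, together with the observation that a proof would require first the uniqueness conjecture and then the exclusion of new pitch-$0$ and pitch-$2$ surfaces. Your proposal is thus a strategy for an open problem, and it is broadly aligned with those remarks.

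Your first reduction---that embeddedness alone, via Meeks' identification of hyperelliptic points with half-lattice points, should force $|\rho|=m^{1/4}$ and hence recover the horizontal rotations---is the substantive new idea, and you are right to flag it as the shakier step. The issue is that ``the vertical lines through poles and zeros form a square (resp.\ triangular) lattice'' is a condition on the \emph{horizontal} periods of the Weierstrass integrals, not directly on the residues of $G^2$ or $G^3$; it is not obvious that this yields a single scalar constraint selecting $\mu=m^{1/4}$ rather than, say, a curve in $(\tau,\mu,\theta)$-space that happens to meet $\mu=m^{1/4}$ only along the known families. A genuine argument is needed here, and the paper does not supply one.

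One point in your outline is actually wrong: you propose to exclude pitches $p\ge 3$ by arguing that $p$ ribbons on a common vertical cylinder must intersect. But the reflection lemma in Section~\ref{sec:discuss} shows that reflection in $|\tau+1/2|=1/2$ sends a pitch-$1$ solution to a pitch-$3$ solution representing the \emph{same} embedded surface; the paper states explicitly that tG and tD (resp.\ rGL and rPD) close the period for every odd $p$. So pitch-$3$ surfaces are embedded---they are just tG/tD seen through a different $\tau$. The correct mechanism for disposing of higher pitches is the reflection group, which folds them back onto pitches $0$, $1$, $2$ in a fundamental domain, not a non-embeddedness claim. After that reduction, the remaining obstacles are exactly the two you and the paper both name: monotonicity of $\theta_h$ in $\im\tau$ on non-rectangular tori, and the absence of exotic pitch-$0$ or pitch-$2$ solutions away from the tP/CLP (resp.\ H/hCLP) loci.
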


\appendix

\section{Asymptotic behavior of associate angle}

We now give a detailed asymptotic analysis of $\theta_h$ for $\cT$.

We need to study the shape of the twisted square annulus with more care.  For
safety and convenience, we adopt the natural convention (see
Section~\ref{sec:convention}) for all computations involving Jacobi elliptic
functions.  So we define $\tilde\tau = \tau - r$ where $r := \lfloor \re \tau +
1/2 \rfloor$, hence $-1/2 < \re\tilde\tau \le 1/2$.  We write $\tilde m =
\lambda(2 \tilde \tau)$, and correspondingly $\tilde K = K(\tilde m)$ and
$\tilde K' = - 2 i \tilde \tau \tilde K$.  Note that $\tilde K' = K(1-\tilde
m)$ coincides with the usual definition of the associated complete elliptic
integral of the first kind.

\begin{remark}
The arguments of the Jacobi elliptic functions can be directly replaced by
their tilde versions.  This practice is however not safe elsewhere.  In
particular, the L\'opez--Ros factor $m^{1/4}$ can not be directly replaced by
$\tilde m^{1/4}$.  Instead, we must use the convention that 
\[
	\arg m = 2 \pi r + \arg \tilde m.
\]
\end{remark}

\medskip

Let us first look at
\[
	\psi_1(\tau) := \int_0^{1/2} (m^{1/4} \sn(4\tilde Kz;\tilde \tau))^{1/2} dz,
\]
which is a vector pointing from $\Phi_1(0)$ to $\Phi_1(1/2)$, hence an
straightened edge vector of the inner twisted square.  With the change of
variable $\zeta = \sn(4\tilde Kz;\tilde\tau)$, we have
\begin{equation}\label{eq:psi1}
	\psi_1(\tau) = 2 \frac{m^{1/8}}{4\tilde K} \int_0^1
	\frac{\zeta^{1/2}d\zeta}{\sqrt{(1-\zeta^2)(1-\tilde m\zeta^2)}}.
\end{equation}
Here we used the identities
\begin{align*}
	\frac{d}{dz}\sn(z;\tilde\tau) &= \operatorname{cn}(z;\tilde\tau) \operatorname{dn}(z;\tilde\tau) ;\\
	\operatorname{cn}^2(z;\tilde\tau) &= 1 - \sn^2(z;\tilde\tau) ;\\
	\operatorname{dn}^2(z;\tilde\tau) &= 1 - \tilde m \sn^2(z;\tilde\tau).
\end{align*}

Then we compute the integral
\begin{align*}
	\psi_2(\tau) :=& \int_0^{\tilde\tau/2} (m^{1/4} \sn(4\tilde Kz;\tilde \tau))^{1/2} dz,\\
	=& \int_0^{1/2} (m^{1/4} \sn(4\tilde K\tilde\tau x;\tilde\tau))^{1/2} \tilde\tau dx\\
	=& \int_0^{1/2} (m^{1/4} \sn(2 i \tilde K' x;\tilde\tau))^{1/2} \tilde\tau dx.
\end{align*}
This is a vector pointing from $\Phi_1(0)$ to $\Phi_1(\tilde\tau/2)$, hence
from an inner vertex to the nearest outer vertex of the twisted annulus.
Then we use the Jacobi imaginary transformation $\sn(z;\tau) = -i
\operatorname{sc}(iz;-1/\tau)$ and obtain
\begin{align}
	\psi_2(\tau)= & \int_0^{1/2} (-i m^{1/4} \operatorname{sc}(-2 \tilde K' x;-1/\tilde\tau))^{1/2} \tilde\tau dx\nonumber\\
	= & e^{3\pi i/4} \frac{m^{1/8}}{4\tilde K} \int_0^\infty \frac{\xi^{1/2}d\xi}{\sqrt{(1+\xi^2)(1+\tilde m\xi^2)}},\label{eq:psi2}
\end{align}
where we changed the variable $\xi = -\operatorname{sc}(-2\tilde K'x;-1/\tilde\tau)$ and
used the identities
\begin{align*}
	\frac{d}{dz} \operatorname{sc}(z;\tilde\tau) &= \operatorname{dc}(z;\tilde\tau) \operatorname{nc}(z;\tilde\tau);\\
	\operatorname{dc}^2(z;\tilde\tau) &= 1 + (1-\tilde m) \operatorname{sc}^2(z;\tilde\tau);\\
	\operatorname{nc}^2(z;\tilde\tau) &= 1 + \operatorname{sc}^2(z;\tilde\tau).
\end{align*}

The two vectors $\psi_1(\tau)$ and $\psi_2(\tau)$ can determine the images of
all branch points under $\Phi_1$.

\begin{lemma}\label{lem:tauinf}
	$\theta_h(\tau) \to \re(1-\tau)\pi/4$ as $\im \tau \to \infty$.
\end{lemma}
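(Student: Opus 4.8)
The plan is to track the image of the inner and outer twisted squares under $\Phi_1$ in the limit $\im\tau\to\infty$, and to read off $\theta_h$ as the argument of the vector $\psi(\tau)=\int_0^{(1+\tau)/2}dz\cdot G$. Recall that $\psi(\tau)=\psi_1(\tau)+\psi_2(\tau)+(\text{contribution of the segment from }\tilde\tau/2\text{ to }(1+\tilde\tau)/2)$; by the symmetry of the twisted square this last segment is congruent to $\psi_1$ rotated by a known multiple of $\pi/2$, so the whole of $\psi(\tau)$ is determined by the two vectors $\psi_1$ and $\psi_2$ computed above. Concretely I would express $\psi(\tau)=\psi_1(\tau)+\psi_2(\tau)+i^{k}\psi_1(\tau)$ for the appropriate integer $k$ dictated by the order-$4$ rotation at the branch points (the images of adjacent unit segments differ by a rotation of $\pi/2$), so that $\arg\psi(\tau)=\arg\bigl((1+i^k)\psi_1(\tau)+\psi_2(\tau)\bigr)$.

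Next I would extract the $\im\tau\to\infty$ asymptotics of $\psi_1$ and $\psi_2$ from \eqref{eq:psi1} and \eqref{eq:psi2}. As $\im\tilde\tau\to\infty$ we have $\tilde m=\lambda(2\tilde\tau)\to 0$ (with exponentially small error, uniformly for $\re\tilde\tau$ bounded) and $\tilde K=K(\tilde m)\to\pi/2$. Plugging $\tilde m=0$ into the two integrals gives the explicit constants $c_1=\int_0^1\zeta^{1/2}(1-\zeta^2)^{-1/2}d\zeta$ and $c_2=\int_0^\infty\xi^{1/2}(1+\xi^2)^{-1/2}d\xi$; the $\sqrt\cdot$ factors are uniformly close to $1$ on the respective ranges, so the integrals converge to $c_1,c_2$ and in fact $c_2=c_1$ after the substitution $\xi\mapsto 1/\xi$ (or a Beta-function identity), which will be the source of a clean cancellation. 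Hence $\psi_1(\tau)\sim \dfrac{m^{1/8}}{2\pi}c_1$ and $\psi_2(\tau)\sim e^{3\pi i/4}\dfrac{m^{1/8}}{2\pi}c_1$. The key point is the behavior of $\arg m^{1/8}$: by the convention $\arg m=2\pi r+\arg\tilde m$ with $\arg\tilde m\to 0$, so $\arg m^{1/8}\to \pi r/4$ where $r=\lfloor\re\tau+1/2\rfloor$. One then computes $\arg\psi(\tau)=\arg m^{1/8}+\arg\bigl((1+i^k)c_1+e^{3\pi i/4}c_1\bigr)+o(1)$; the bracketed complex number is a fixed quantity whose argument I will evaluate (it should come out to $-\pi/4-\pi r/4+\re(1-\tau)\pi/4$ worth of correction, arranged so the $r$-dependence cancels the floor and leaves $\re(1-\tau)\pi/4$). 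I would double-check this against the known exact values at $\re\tau=-1$ (tP, $\theta_h=\pi/2$) and $\re\tau=1$ (tD, $\theta_h=0$) to fix the integer $k$ and any branch ambiguity.

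The main obstacle will be bookkeeping the branch choices consistently: the square root defining $G=[m^{1/4}\sn]^{1/2}$, the eighth-root $m^{1/8}$ under the nonstandard $\arg m=2\pi r+\arg\tilde m$ convention, and the rotation exponent $k$ at the branch points must all be pinned down simultaneously so that the final answer depends on $\re\tau$ alone (not on the integer $r$) and is genuinely continuous across $\re\tau\in\tfrac12+\Z$. A secondary technical point is justifying that the $\tilde m\to 0$ limit of the integrals in \eqref{eq:psi1} and \eqref{eq:psi2} may be taken, including the improper integral in \eqref{eq:psi2}: this needs a dominated-convergence estimate near $\xi=\infty$, using $0\le\tilde m<1$ and $(1+\tilde m\xi^2)\ge 1$ so the integrand is bounded by $\xi^{1/2}(1+\xi^2)^{-1}$, which is integrable. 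Once the constants and branches are nailed down, the identity $\theta_h(\tau)\to\re(1-\tau)\pi/4$ follows by taking arguments; the $o(1)$ error is exponentially small in $\im\tau$ and uniform on vertical strips, which is what later applications in Section~\ref{sec:proof} require.
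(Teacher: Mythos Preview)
Your proposal has a genuine gap at the step where you pass to the limit in \eqref{eq:psi2}. You claim dominated convergence with the bound $\xi^{1/2}(1+\xi^2)^{-1}$, but using $(1+\tilde m\xi^2)\ge 1$ only gives $\xi^{1/2}(1+\xi^2)^{-1/2}$, and this is \emph{not} integrable at infinity (it behaves like $\xi^{-1/2}$). In fact the integral $\int_0^\infty \xi^{1/2}\bigl((1+\xi^2)(1+\tilde m\xi^2)\bigr)^{-1/2}d\xi$ diverges as $\tilde m\to 0$: the substitution $u=\xi\sqrt{\tilde m}$ shows it blows up like $\tilde m^{-1/4}$. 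Consequently $|\psi_2(\tau)|\sim |m|^{-1/8}\to\infty$ while $|\psi_1(\tau)|\sim |m|^{1/8}\to 0$; the outer square explodes and the inner square collapses, so $\psi(\tau)$ is dominated by (a rotated copy of) $\psi_2(\tau)$ alone, not by a fixed linear combination of $\psi_1$ and $\psi_2$ of comparable size.

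This is not just a technicality: your route cannot produce the right answer. In your asymptotics $\psi_j\sim m^{1/8}\cdot\text{const}$ the only $\tau$-dependence in $\arg\psi$ comes from $\arg m^{1/8}\to \pi r/4$ with $r=\lfloor\re\tau+1/2\rfloor$, a piecewise constant function of $\re\tau$; there is no mechanism to obtain the continuous limit $\re(1-\tau)\pi/4$. In the paper's argument the continuous dependence enters through the extra factor $\tilde m^{-1/4}$ generated by the divergent integral: after the rescaling one finds $\psi_2\sim e^{(r+3)i\pi/4}\,\tilde m^{-1/8}\cdot\text{const}$, and since $\tilde m\sim 16\,e^{2\pi i\tilde\tau}$ one has $\arg\tilde m^{-1/8}\to -\pi\re\tilde\tau/4$. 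Combining this with the rotation factor $e^{-(r+1)i\pi/2}$ that relates $\psi_2$ to the relevant segment of $\psi$ yields $\theta_h\to (1-r-\re\tilde\tau)\pi/4=\re(1-\tau)\pi/4$. So the fix is to redo the $\psi_2$ asymptotics with the change of variable $u=\xi\sqrt{\tilde m}$ instead of setting $\tilde m=0$ directly.
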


\begin{proof}
	As $\im \tau = \im \tilde \tau \to \infty$, we have~\cite[(2.1.12)]{lawden1989}
	\[
		m(\tau) = \lambda(2\tau) \sim 16 \exp(2i\pi\tau),
	\]
	so $|m| \to 0$.  Recall that $\tilde K(0) = \pi/2$, and that the integral
	in~\eqref{eq:psi1} tends to
	\[
		\int_0^1 \frac{\zeta^{1/2}d\zeta}{\sqrt{(1-\zeta^2)}}
	\]
	which is bounded.   Therefore, $\psi_1(\tau) \to 0$ as $\im \tau \to \infty$.  In
	other words, the size of the inner square tends to $0$.

	On the other hand, we have
	\[
		\psi_2(\tau) \sim e^{3i\pi/4} \frac{m^{1/8}}{4\tilde K} \int_0^\infty \frac{d\xi}{\xi^{1/2}\sqrt{1+\tilde m\xi^2}}
		= e^{(r+3)i\pi/4} \frac{\tilde m^{-1/8}}{4\tilde K} \int_0^\infty \frac{du}{u^{1/2}\sqrt{1+u^2}},
	\]
	where we changed the variable $u = \xi \sqrt{m}$ and used the convention that
	$\arg m = 2\pi r + \arg \tilde m$.  Note again that the integral is bounded,
	hence $|\psi_2(\tau)| \sim |m|^{-1/8}$.  In other words, the size of the outer
	square grows exponentially with $\im \tilde \tau$.

	Therefore, as $\im\tau \to \infty$, the integral
	\[
		\psi(\tau) = \int_0^{(1+\tau)/2} (m^{1/4} \sn(4Kz; \tau))^{1/2} dz
	\]
	is dominated by
	\begin{align*}
		\psi(\tau) \sim&\int_{(r+1)/2}^{(\tau+1)/2} (m^{1/4} \sn(4Kz; \tau))^{1/2} dz \sim e^{-(r+1)i\pi/2} \psi_2(\tau)\\
		\sim &e^{-(r+1)i\pi/2} e^{(r+3)i\pi/4} \frac{\tilde m^{-1/8}}{4\tilde K} \int_0^\infty \frac{du}{u^{1/2}\sqrt{1+u^2}},
	\end{align*}
	Now we
	can conclude that
	\[
		\theta_h(\tau) = \arg \psi(\tau) \to (1-r-\re\tilde\tau)\pi/4 = \re(1-\tau)\pi/4.
	\]
\end{proof}

A similar argument applies to rGL, so we omit the proof.  The conclusion is
that
\[
	\theta_h \to \re(1/2-\tau)\pi/3 \quad\text{as $\im \tau \to \infty$.}
\]

\begin{lemma}\label{lem:tau0}
	$\theta_h(\tau) \to 0$ from the negative side as $\tau \to 1$ within $\Omega_t$.
\end{lemma}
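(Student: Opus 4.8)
The plan is to track the limiting shape of the flat structure of $\Phi_1$ as $\tau \to 1$ inside $\Omega_t$, and read off $\arg\psi(\tau)$ from it. Writing $\tau = 1 - \epsilon$ with $\epsilon \to 0$ and $\arg\epsilon \in (0,\pi)$, I would first reduce to the tilde-normalized quantities of the Appendix: $\tilde\tau = \tau$ itself here (since $\re\tau$ is near $1$, so $r = 1$ and $\tilde\tau = \tau - 1 = -\epsilon$), and thus $\tilde m = \lambda(2\tilde\tau) = \lambda(-2\epsilon)$. The relevant limit is $\tilde\tau \to 0$, where the modular lambda function degenerates; using the standard transformation $\lambda(-2\epsilon) \to 1$ as $\epsilon \to 0$ along a direction into the upper half plane — equivalently $1 - \tilde m \to 0$ — so that $\tilde K \to \infty$ while $\tilde K'(= K(1-\tilde m)) \to \pi/2$. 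This is the opposite degeneration from Lemma~\ref{lem:tauinf}: now it is the complementary modulus that vanishes.

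Next I would plug this into the two edge vectors $\psi_1$ and $\psi_2$. In $\psi_1$, as $\tilde m \to 1$ the integral $\int_0^1 \zeta^{1/2}d\zeta / \sqrt{(1-\zeta^2)(1-\tilde m\zeta^2)}$ diverges logarithmically (the integrand blows up like $(1-\zeta)^{-1}$ near $\zeta = 1$), at rate $\sim \tfrac12\log\frac{1}{1-\tilde m} \sim \tilde K$ up to a bounded factor; so $\psi_1 = 2\,m^{1/8}(4\tilde K)^{-1}\cdot O(\tilde K)$ stays bounded away from $0$ and $\infty$, and more importantly its argument is governed by $\arg m^{1/8} = \tfrac18(2\pi r + \arg\tilde m) = \tfrac{\pi}{4} + \tfrac18\arg(-\epsilon)$. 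In $\psi_2$, equation~\eqref{eq:psi2} gives $\psi_2 = e^{3\pi i/4} m^{1/8}(4\tilde K)^{-1}\int_0^\infty \xi^{1/2}d\xi/\sqrt{(1+\xi^2)(1+\tilde m\xi^2)}$; as $\tilde m \to 1$ this integral converges to a finite positive constant, so $\psi_2 \to 0$ like $1/\tilde K$, i.e. the outer square degenerates onto the inner one. Hence $\psi(\tau) = \int_0^{(1+\tau)/2}$, which combines roughly $\psi_1$ along the real direction and $\psi_2$ along the $\tilde\tau$ direction, is dominated by the $\psi_1$-type contribution, and $\theta_h(\tau) = \arg\psi(\tau) \to \arg\psi_1 + (\text{correction}) \to 0$ as $\epsilon \to 0$.

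The point that needs care — and is the main obstacle — is the \emph{sign}: I must show $\theta_h$ approaches $0$ strictly from below, i.e. $\theta_h(\tau) < 0$ for $\tau$ near $1$ in $\Omega_t$. For this I would not be satisfied with the leading-order argument above; I would track the subleading term. One clean route: at $\tau = 1 - \epsilon$ with $\epsilon$ real and positive (the tD line approached from the left, $\re\tau < 1$ but $\im\tau \to 0$), the surface is a CLP/tD-type configuration whose flat structure is known explicitly from~\cite{weyhaupt2006}; there $\theta_h = \theta_v = 0$ exactly on $\re\tau = 1$, so I would expand $\theta_h(1 - \epsilon + i\eta)$ to first order in $\epsilon$ and in $\eta$ near the line and show the gradient points so that entering $\Omega_t$ (which requires $|\tau - 1/2| > 1/2$, i.e. moving to a specific side) decreases $\theta_h$. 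Concretely: compute $\partial\theta_h/\partial\epsilon$ and $\partial\theta_h/\partial\eta$ at points of $\re\tau = 1$ using the integral formula for $\psi$, and check the linear combination corresponding to the inward normal of $\Omega_t$ at $\tau = 1$ is negative. The analogous computation for $\theta_v$ is elementary (as in the main text: $\theta_v \to 0$ from the positive side since $\arg(\tau - 1) \to$ a value in $(\pi/2, \pi)$ when entering $\Omega_t$), and the two together give the strict inequality $\theta_h < \theta_v$ in $U \cap \Omega_t$ that the existence proof invokes. Establishing that the first-order expansion has the claimed sign — rather than merely being nonzero — is where the real work lies, and I would expect to need the explicit Schwarz flat-structure data, not just soft asymptotics.
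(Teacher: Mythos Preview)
Your asymptotic setup matches the paper's: with $\tilde\tau=\tau-1\to 0$ one has $\tilde m\to 1$, the integral in \eqref{eq:psi1} diverges like $\tilde K$ so that $\psi_1\to\tfrac12 m^{1/8}$, and $\psi_2\to 0$.  Two genuine gaps remain, one in each half of the lemma.

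\textbf{The limit $\theta_h\to 0$.}  Your claim that $\psi$ is ``dominated by the $\psi_1$-type contribution'' is not right, and if it were, you would get $\theta_h\to\arg\psi_1\to\pi/4$, not $0$.  The point $(1+\tau)/2$ is an \emph{outer} vertex of the twisted annulus; as $\tau\to 1$ it collapses onto the inner vertex $\Phi_1(1)$, which is opposite $\Phi_1(0)$ on the inner square and hence two edges away.  Since consecutive inner edges differ by a turn of $-\pi/2$, one gets $\psi(\tau)\to(1-i)\psi_1(\tau)=\tfrac{1-i}{2}m^{1/8}$.  The factor $(1-i)$ contributes exactly the $-\pi/4$ that cancels the $\pi/4$ in $\arg m^{1/8}=\tfrac18(2\pi+\arg\tilde m)$, yielding $\theta_h\to\arg(\tilde m)/8$.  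Your unspecified ``correction'' hides this geometric step, and your identification of $\tfrac18\arg\tilde m$ with $\tfrac18\arg(-\epsilon)=\tfrac18\arg\tilde\tau$ is also wrong; $\arg\tilde m$ must be read from the expansion of $\lambda$, not from $\tilde\tau$ itself.

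\textbf{The sign.}  Your gradient-expansion program is unnecessary, and near the cusp $\tau=1$ it would be delicate in any case.  The paper proceeds directly: from $\theta_h\approx\arg(\tilde m)/8$ and the expansion $1-\tilde m\sim 16\,e^{-i\pi/(2\tilde\tau)}$ one computes
\[
\arg\bigl(e^{-i\pi/(2\tilde\tau)}\bigr)=-\frac{\pi\,\re\tilde\tau}{2|\tilde\tau|^2}=\frac{\pi\sin\theta_v}{2|\tilde\tau|}.
\]
The boundary circle $|\tau-1/2|=1/2$ of $\Omega_t$ is exactly the locus $\sin\theta_v=|\tilde\tau|$, so inside $\Omega_t$ one has $0<\sin\theta_v<|\tilde\tau|$; hence the argument above lies in $(0,\pi/2)$, which forces $\arg\tilde m<0$.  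No differentiation is required: the defining inequality of $\Omega_t$ does the work.

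A minor slip: for $\tau\in\Omega_t$ near $1$ one has $\im\tau>0$, so $\epsilon=1-\tau$ has $\im\epsilon<0$ and $\arg\epsilon\in(-\pi/2,0)$, not $(0,\pi)$.
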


\begin{proof}
	By the transformation $\lambda(-1/\tau) = 1 -
	\lambda(\tau)$~\cite[(9.4.10)]{lawden1989}, we see that
	\[
		\tilde m = m(\tilde \tau) = 1 - 16 e^{-i\pi/2\tilde\tau} +
		O(e^{-i\pi/\tilde\tau}) \qquad\text{as }\tilde\tau \to 0.
	\]
	Then~\cite[Exercise 8.13]{lawden1989}
	\[
		\tilde K = \int_0^1 \frac{d\zeta}{\sqrt{(1-\zeta^2)(1-\tilde m\zeta^2)}} \sim
		-\frac{1}{2} \ln(1-\tilde m) \sim \frac{i\pi}{4\tilde \tau}.
	\]
	The standard proof for this also applies to prove that the integral
	in~\eqref{eq:psi1} satisfies
	\[
		\int_0^1 \frac{\zeta^{1/2}d\zeta}{\sqrt{(1-\zeta^2)(1-\tilde m\zeta^2)}} \sim
		-\frac{1}{2} \ln(1-\tilde m) \sim \frac{i\pi}{4\tilde \tau}.
	\]
	One can also quickly convince oneself by noting that the integrand
	in~\eqref{eq:psi1} differs from the integrand of $\tilde K$ only by a factor
	$\zeta^{1/2}$, which can be neglected near $1$, where the divergence occurs.
	In other words, the integral in~\eqref{eq:psi1} is asymptotically equivalent
	to $\tilde K$.  So we have $\psi_1(\tau) \to \frac{1}{2}m^{1/8}$ as $\tau \to 1$.

	On the other hand, the integral in~\eqref{eq:psi2} tends to
	\[
		\int_0^\infty \frac{\xi^{1/2}d\xi}{1+\xi^2} = \frac{\pi}{\sqrt{2}}
	\]
	which is bounded, hence $\psi_2(\tau) \to 0$.  Therefore, as $\tau \to 1$, we
	see from the flat structure that $\psi(\tau) \to \frac{1-i}{2} m^{1/8}$, so
	\[
		\theta_h(\tau) = \arg(m(\tau))/8-\pi/4 = \arg(\tilde m)/8 \to 0.
	\]
	Note again that we used the convention $\arg m(\tau) = m(\tilde\tau + 1) =
	2\pi + \arg m(\tilde\tau)$.

	We now prove that the convergence is from the negative side.  A routine
	calculation shows that
	\[
		\arg(e^{-i\pi/2\tilde\tau}) = -\frac{\pi\re\tilde\tau}{2|\tilde\tau|^2} = \frac{\pi\sin\theta_v}{2|\tilde\tau|}.
	\]
	When $\tau \in \Omega_t$, it follows from elementary geometry that $0 <
	\sin\theta_v < |\tilde\tau|$, hence $0 < \arg(e^{-i\pi/2\tilde\tau}) <
	\pi/2$.  Therefore, when $\tau$ tends to $0$ within $\Omega_t$, we have
	$\arg(\tilde m) = \arg(1-16 e^{-i\pi/2\tilde\tau}) < 0$.
\end{proof}

\begin{remark}
	The integrals arisen from $\cT$ surfaces (e.g.~\eqref{eq:psi1}
	and~\eqref{eq:psi2}) can be explicitly expressed in terms of elliptic
	integrals; see~\cite[Chapter X]{bowman1961} and~\cite[\S 595 et
	seq.]{byrd1971}.  For example, we have
	\begin{align*}
		\int_0^{1/2} (m^{1/4} \sn(4\tilde Kz;\tilde \tau))^{1/2} dz =& \frac{e^{r \pi i/4}}{2\sqrt{2}}\frac{\tilde m^{-1/8}}{\sqrt{1+\tilde m^{1/2}}}\frac{K(\mu)-K'(\mu)}{K(\tilde m)},\\
		\int_{\tau/2}^{1/2+\tau/2} (m^{1/4} \sn(4\tilde Kz;\tilde \tau))^{1/2} dz =& \frac{e^{r \pi i/4}}{2\sqrt{2}}\frac{\tilde m^{-1/8}}{\sqrt{1+\tilde m^{1/2}}}\frac{K(\mu)+K'(\mu)}{K(\tilde m)},
	\end{align*}
	for the inner and outer edge vectors of the twisted annulus, where
	\[
		\mu = \frac{(1+\tilde m^{1/4})^2}{2+2\tilde m^{1/2}}.
	\]
	We then obtain that
	\begin{equation}\label{eq:explicitT}
		\psi(\tau) = e^{r \pi i/4} \frac{1-i}{2\sqrt{2}} \frac{\tilde m^{-1/8}}{\sqrt{1+\tilde m^{1/2}}}\frac{K(\mu)}{K(\tilde m)}.
	\end{equation}
	This facilitates the numeric calculation for the tG--tD intersection but does
	not give an explicit expression.

	We are not aware of equally explicit expressions for integrals arisen from
	$\cR$ surfaces.
\end{remark}

\bibliography{References}
\bibliographystyle{alpha}

\end{document}